\newtheorem{proposition}{Proposition}[section]
\newtheorem{lemma}[proposition]{Lemma}
\newtheorem{theorem}[proposition]{Theorem}
\newtheorem{corollary}[proposition]{Corollary}
\DeclareMathOperator{\Projectives}{\mathcal{P}} 
\DeclareMathOperator{\Injectives}{\mathcal{I}} 
\DeclareMathOperator{\proj}{\mathsf{proj}} 
\DeclareMathOperator{\Ker}{\mathsf{Ker}} 
\DeclareMathOperator{\Image}{\mathsf{Im}} 
\DeclareMathOperator{\Hom}{\mathsf{Hom}} 
\DeclareMathOperator{\End}{\mathsf{End}} 
\DeclareMathOperator{\add}{\mathsf{add}} 
\DeclareMathOperator{\gldim}{\mathsf{gl.dim}} 
\DeclareMathOperator{\domdim}{\mathsf{dom.dim}} 
\DeclareMathOperator{\Ext}{\mathsf{Ext}} 
\DeclareMathOperator{\op}{\mathsf{op}} 
\DeclareMathOperator{\Ab}{\mathsf{Ab}} 
\DeclareMathOperator{\pdim}{\mathsf{pd}} 
\DeclareMathOperator{\mr}{\mathsf{m}} 
\DeclareMathOperator{\SplitEpi}{\mathsf{Split \text{ } Epi}} 
\DeclareMathOperator{\SplitMono}{\mathsf{Split \text{ } Mono}} 
\DeclareMathOperator{\Epi}{\mathsf{Epi}} 
\DeclareMathOperator{\Mono}{\mathsf{Mono}} 
\DeclareMathOperator{\E}{\mathsf{E}} 
\let\mod\relax
\DeclareMathOperator{\mod}{\mathsf{mod}} 
\DeclareMathOperator{\Mod}{\mathsf{Mod}} 
\DeclareMathOperator{\Mor}{\mathsf{Mor}} 
\newcommand{\A}{\mathcal{A}}
\newcommand{\B}{\mathcal{B}}
\newcommand{\C}{\mathcal{C}}
\DeclareMathOperator{\eff}{\mathsf{eff}} 
\DeclareMathOperator{\modone}{\mathsf{mod}_{1} \hspace{-0.1em}} 
\begin{document}

\title{A functorial approach to $0$-abelian categories}
\author{Vitor Gulisz}
\address{Mathematics Department, Northeastern University, Boston, MA 02115, USA}
\email{gulisz.v@northeastern.edu}
\date{September 29, 2025}

\begin{abstract}
We use functorial methods to define and study $0$-abelian categories, which we propose to be the case $n = 0$ of Jasso's $n$-abelian categories. In particular, we define a bifunctor for $0$-abelian categories with enough injectives or projectives, which is analogous to the extension bifunctor for an abelian category. We prove a few results concerning this bifunctor, including $0$-abelian versions of the long exact sequence involving the extension bifunctors, of a conjecture due to Auslander on the direct summands of the extension functors, and of the Hilton-Rees theorem. These results are then applied to the study of the stable categories of a $0$-abelian category, and a similar discussion is carried out for the stable categories of an abelian category. Moreover, by specializing our results to modules over rings, we show that $0$-abelian categories with additive generators are in correspondence with semi-hereditary rings. We present applications to these rings.
\end{abstract}

\maketitle

\tableofcontents

\section{Introduction}\label{section.1}

This paper is a continuation of \cite{2409.10438}, and its \textit{raison d'être} is to extend the concept of an $n$-abelian category to the case $n = 0$. Let us start with an overview.

Recall that the notion of an $n$-abelian category was introduced by Jasso in \cite{MR3519980} for a positive integer $n$, which is based on the concepts of $n$-kernel and $n$-cokernel. These definitions were given in such a way that, when $n = 1$, they coincide with the classical notions of an abelian category, kernel and cokernel, respectively. In this context, it is natural to ask if it would also be possible to consider the case $n = 0$, which should lead to the notions of a ``$0$-abelian category'', ``$0$-kernel'' and ``$0$-cokernel''. As a first attempt to obtain these concepts, perhaps, one could try to set $n = 0$ in the relevant definitions and in the axioms of an $n$-abelian category given in \cite{MR3519980}. However, this approach does not provide any clear answer to what should happen when $n = 0$, and only leaves room for guesses. So, one might want to look for other alternatives to tackle this problem. As one such alternative, in this paper, we suggest to extend the functorial approach to $n$-abelian categories developed in \cite{2409.10438} to the case $n = 0$, and use it as a guide to develop a theory of ``$0$-abelian categories''. Luckily, this strategy turns out to be quite straightforward. Let us explain why.

Among the characterizations of $n$-abelian categories given in \cite{2409.10438}, the one presented below seems to be the easiest to state, which follows from \cite[Theorems 3.6 and 6.10]{2409.10438}.

\begin{theorem}\label{theorem.1}
An additive and idempotent complete category $\C$ is $n$-abelian if and only if $\C$ satisfies the following axioms: \begin{enumerate}[leftmargin=3.8em]
    \item[\textup{($n$F1)}] $\C$ is right coherent and $\gldim (\mod \C) \leqslant n+1$.
    \item[\textup{($n$F1$^{\op}$)}] $\C$ is left coherent and $\gldim (\mod \C^{\op}) \leqslant n+1$.
    \item[\textup{($n$F2)}] Every $F \in \mod \C$ with $\pdim F \leqslant n$ is a syzygy.
    \item[\textup{($n$F2$^{\op}$)}] Every $F \in \mod \C^{\op}$ with $\pdim F \leqslant n$ is a syzygy.
\end{enumerate}
\end{theorem}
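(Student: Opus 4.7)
The plan is to derive the four-axiom characterization by combining the two characterizations of $n$-abelian categories established as Theorems 3.6 and 6.10 of \cite{2409.10438}. First, I would recall how Jasso's definition unpacks through the Yoneda embedding $\Yoneda : \C \hookrightarrow \mod \C$: the existence of an $n$-cokernel for a morphism $f : X \to Y_0$ in $\C$ translates into a length-$(n{+}1)$ projective resolution in $\mod \C$ of $\Coker \C(-,f)$, together with an honest exactness condition on the underlying sequence in $\C$ (namely that the last map is a weak cokernel in $\C$, not merely a presentation at the functorial level). Dually, $n$-kernels correspond to such data in $\mod \C^{\op}$.

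From this translation, right coherence plus the bound $\gldim(\mod \C) \leqslant n+1$ captures exactly the ``projective resolution'' half of the existence of $n$-cokernels for every morphism; this is axiom (nF1), and (nF1$^{\op}$) plays the dual role. What remains, and what Theorem 3.6 of \cite{2409.10438} is presumably designed to expose, is a compatibility condition asserting that these resolutions genuinely arise from sequences in $\C$. I would then invoke Theorem 6.10 of \cite{2409.10438} to reformulate this compatibility: a functor $F \in \mod \C$ with $\pdim F \leqslant n$ being a syzygy means precisely that its minimal resolution admits a representable continuation, which is what is needed to lift resolution-level data to an $n$-cokernel sequence in $\C$ in the sense of Jasso. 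Combining these reformulations on both sides gives the four axioms.

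The main obstacle, and the content hidden inside this outline, is the equivalence between the honest exactness of $n$-cokernel (resp.\ $n$-kernel) sequences in $\C$ and the syzygy conditions (nF2), (nF2$^{\op}$). This is the technical heart of Theorem 6.10, and the natural tool is the Auslander--Bridger transpose, or an analogous duality between $\mod \C$ and $\mod \C^{\op}$, which swaps the property ``every low projective-dimension functor on one side is a syzygy'' with ``every corresponding exact sequence on the other side is a genuine $n$-cokernel/$n$-kernel''. Once that duality is in hand, the assembly of Theorems 3.6 and 6.10 into the stated four-axiom characterization is essentially formal.
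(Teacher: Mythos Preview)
Your high-level plan matches the paper exactly: the statement is not proved afresh here but is recorded as a consequence of \cite[Theorems 3.6 and 6.10]{2409.10438}, and you correctly identify those two results as the input.

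That said, your elaboration swaps the variance throughout. In the paper (and in \cite{2409.10438}), axiom ($n$F1) --- right coherence together with $\gldim(\mod\C)\leqslant n+1$ --- is equivalent to $\C$ having $n$-\emph{kernels}, not $n$-cokernels; it is ($n$F1$^{\op}$) that governs $n$-cokernels. The Yoneda embedding sends a morphism $f$ to $\C(-,f)$, and extending the projective presentation of $\Coker\C(-,f)$ to a length-$(n{+}1)$ resolution produces an $n$-kernel of $f$, exactly as recalled at the start of Subsection~\ref{subsection.2.1}. Likewise, the syzygy axiom ($n$F2) lives on the $\mod\C$ side and therefore interacts with the $n$-kernel data, not with $n$-cokernels. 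Your description of ``syzygy'' as ``minimal resolution admits a representable continuation'' is also off: $F$ being a syzygy simply means there is a monomorphism $F\hookrightarrow\C(-,X)$ for some $X\in\C$, and this is what encodes the remaining Jasso axiom on the relevant side. None of this affects the validity of the plan, but if you flesh out the argument you should correct the orientation.
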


Now, although $n$ stands for a positive integer in Theorem \ref{theorem.1}, it makes sense to consider its statement for $n = 0$, and we can use it as a definition of a \mbox{``$0$-abelian category''.} So, let $\C$ be an additive and idempotent complete category. We say that $\C$ is a \textit{$0$-abelian category} if, in addition to being additive and idempotent complete, $\C$ satisfies the axioms ($0$F1), ($0$F1$^{\op}$), ($0$F2) and ($0$F2$^{\op}$). However, since ($0$F2) and ($0$F2$^{\op}$) are empty statements, as every projective object is a syzygy of itself, we conclude that ($0$F1) and ($0$F1$^{\op}$) are the only axioms to be considered when $n = 0$. Also, recall from \cite[Proposition 3.2]{2409.10438} that, when $n$ is positive, the axioms ($n$F1) and ($n$F1$^{\op}$) for $\C$ are equivalent to the conditions that $\C$ has $n$-kernels and $n$-cokernels, respectively. Therefore, we should expect that a similar result holds for $n = 0$, and this observation motivated our definition of ``$0$-kernel'' and ``$0$-cokernel'', which we give in Section \ref{section.2}. With these definitions, we prove in Proposition \ref{proposition.1} that the axioms ($0$F1) and ($0$F1$^{\op}$) for $\C$ are equivalent to $\C$ having ``$0$-kernels'' and ``$0$-cokernels'', respectively. Finally, we conclude in Theorem \ref{theorem.3} that a category is $0$-abelian if and only if it is additive and has ``$0$-kernels'' and ``$0$-cokernels''.


While the above discussion is carefully carried out in Subsection \ref{subsection.2.1}, we now outline the subsequent content of this paper.

Once the notion of a $0$-abelian category is settled, we continue Section \ref{section.2} with some basic results on it. For instance, we prove in Corollary \ref{corollary.1} that if a subcategory of a $0$-abelian category is closed under finite direct sums and direct summands, then it is also $0$-abelian. Moreover, we consider ``$0$-exact sequences'' in $0$-abelian categories, and we extend a few results on $m$-exact sequences in $n$-abelian categories and on the von Neumann regularity of $n$-abelian categories to the case $n = 0$.

In Section \ref{section.3}, using the functorial approach, we study how the factorization of a morphism in an abelian category as an epimorphism followed by a monomorphism compares to similar factorizations of a morphism in a $0$-abelian category. We then use this comparison to suggest the existence of analogous factorizations of a morphism in an $n$-abelian category when $n \geqslant 2$. The discussion carried in this section hints that the case $n = 0$ of $n$-abelian categories might help to understand the case $n \geqslant 2$.

In Section \ref{section.4}, we consider $0$-abelian categories with enough injectives or projectives. First, we extend results about $n$-abelian categories from \cite[Section 7]{2409.10438} to $n = 0$. Then we define a bifunctor $\E_{\C}(-,-)$ for a $0$-abelian category $\C$ with enough injectives or projectives, and we give a few results that support the idea that $\E_{\C}(-,-)$ is analogous to the extension bifunctor $\Ext_{\A}^{1}(-,-)$ for an abelian category $\A$. Among these results, we prove in Theorem \ref{theorem.4} that every ``$0$-exact sequence'' in $\C$ induces a ``not so long exact sequence'' involving the bifunctor $\E_{\C}(-,-)$, which is analogous to the long exact sequence involving the bifunctors $\Ext_{\A}^{i}(-,-)$ induced by a short exact sequence in $\A$. Moreover, in Theorem \ref{theorem.5}, we prove a $0$-abelian version of a conjecture attributed to Auslander, on the direct summands of the extension functors on an abelian category. Also, in Theorem \ref{theorem.6}, we present a $0$-abelian version of the Hilton-Rees theorem, and we use this and previous results to study the stable categories of a $0$-abelian category. We then close the section with analogous results on the stable categories of an abelian category and with sources of examples to which these results can be applied.

In Section \ref{section.5}, we restrict results obtained thus far to rings and modules over rings. By doing so, we show in Theorem \ref{theorem.9} that $0$-abelian categories with additive generators are in correspondence with semi-hereditary rings, and we give a couple of examples. Furthermore, we use the theory of $0$-abelian categories to prove a few known results on semi-hereditary rings.

In Appendix \ref{section.6}, we prove a couple of results on ``$0$-kernels'' and ``$0$-cokernels''. In particular, in Proposition \ref{proposition.10}, we describe these concepts in terms of universal properties, which are similar to the universal properties of kernels and cokernels.

Finally, in Appendix \ref{section.7}, we show how $0$-abelian categories relate to $0$-Auslander categories, and we question whether there exists a notion of ``$0$-cluster tilting subcategory''.

\subsection{Conventions and preliminaries}\label{subsection.1.1}

We will follow the same conventions, definitions, notations and preliminaries of \cite{2409.10438}. In particular, all subcategories are assumed to be full and, throughout this paper, $\C$ will be an additive and idempotent complete category. However, from now on, $n$ will be a nonnegative integer.

In a few instances in this text, we will suggest the use of a result that says that any pair of composable morphisms in an abelian category induces an exact sequence involving the kernels and cokernels of the given morphisms and their composition. Its precise statement and proof can be found in \cite[Theorem 1]{MR202792} or \cite[Satz 9]{MR170924}. We will refer to this result as the \textit{circular sequence}, and we hope to help popularize it.

\section{\texorpdfstring{$0$}{0}-Abelian categories}\label{section.2}

Motivated by the functorial approach to $n$-abelian categories developed in \cite{2409.10438} for a positive integer $n$, in this section, we introduce the notions of a ``$0$-abelian category'', ``$0$-kernel'' and ``$0$-cokernel''. Once these are established, we explain how $0$-abelianness behaves with subcategories, and we extend a few known results on the von Neumann regularity and $m$-exact sequences of $n$-abelian categories for \mbox{$n$ positive to the case $n = 0$.}


\subsection{Their axioms}\label{subsection.2.1}

Following our previous discussion in Section \ref{section.1}, we say that an additive and idempotent complete category $\C$ is \textit{$0$-abelian} if it satisfies the following axioms:
\begin{enumerate}[leftmargin=3.8em]
    \item[\textup{($0$F1)}] $\C$ is right coherent and $\gldim (\mod \C) \leqslant 1$.
    \item[\textup{($0$F1$^{\op}$)}] $\C$ is left coherent and $\gldim (\mod \C^{\op}) \leqslant 1$.
\end{enumerate}
It is convenient to remark that if $\C$ is coherent, then the global dimensions of $\mod \C$ and $\mod \C^{\op}$ coincide, see \cite[Theorem A.5]{2409.10438}. Thus, we can simply say that $\C$ is $0$-abelian if $\C$ is coherent and the global dimension of $\mod \C$ is at most $1$.

Let us briefly recall that, in \cite[Propositions 3.2 and 3.5]{2409.10438}, we started with the axioms of an $n$-abelian category given by Jasso in \cite{MR3519980} (for $n$ positive), and then we ``translated'' them in terms of $\mod \C$ and $\mod \C^{\op}$. Now, for the case $n = 0$, we are starting with the axioms of a $0$-abelian category in terms of $\mod \C$ and $\mod \C^{\op}$, and it would be desirable to ``translate'' them back to axioms which are similar to those given by Jasso. In order to do this, we first need to understand what a ``$0$-kernel'' and a ``$0$-cokernel'' are, and for this, we will use the functorial approach.

To begin with, let us recall how $n$-kernels and $n$-cokernels in $\C$ can be understood through $\mod \C$ and $\mod \C^{\op}$, respectively, when $n$ is a positive integer. Let $f \in \C(X,Y)$ be a morphism in $\C$. As it was discussed in \cite[Section 3]{2409.10438}, if $n$ is a positive integer and $\C$ has $n$-kernels, that is, if $\C$ is right coherent and $\gldim (\mod \C) \leqslant n + 1$, see \cite[Proposition 3.2]{2409.10438}, then an $n$-kernel of $f$ is obtained as follows: First, consider the exact sequence \[ \begin{tikzcd}
{\C(-,X)} \arrow[r, "{\C(-,f)}"] &[1.2em] {\C(-,Y)} \arrow[r] & F \arrow[r] & 0
\end{tikzcd} \] in $\mod \C$, which is obtained by taking the cokernel of $\C(-,f)$. Since the above sequence is a projective presentation of $F$ in $\mod \C$ and $\pdim F \leqslant n + 1$, we can complete it to a projective resolution \[ \begin{tikzcd}
0 \arrow[r] &[-0.15em] {\C(-,X_{n})} \arrow[r] &[-0.15em] \cdots \arrow[r] &[-0.15em] {\C(-,X_{1})} \arrow[r] &[-0.15em] {\C(-,X)} \arrow[r, "{\C(-,f)}"] &[1.2em] {\C(-,Y)} \arrow[r] &[-0.15em] F \arrow[r] &[-0.15em] 0
\end{tikzcd} \] of $F$ in $\mod \C$. By the Yoneda lemma, this resolution leads to a sequence of morphisms \[ \begin{tikzcd}
X_{n} \arrow[r] & \cdots \arrow[r] & X_{1} \arrow[r] & X
\end{tikzcd} \] in $\C$, which is, by definition, an $n$-kernel of $f$. Conversely, every $n$-kernel of $f$ induces a projective resolution of $F$ in $\mod \C$ as above, which extends the projective presentation of $F$ in $\mod \C$ that we started with. If $\C$ has $n$-cokernels, then the $n$-cokernels of $f$ can be characterized in a similar way, by considering $\mod \C^{\op}$ instead of $\mod \C$.

Now, for $n = 0$, suppose that $\C$ is right coherent and $\gldim (\mod \C) \leqslant 1$. Given a morphism $f \in \C(X,Y)$ in $\C$, we can again consider the exact sequence \[ \begin{tikzcd}
{\C(-,X)} \arrow[r, "{\C(-,f)}"] &[1.2em] {\C(-,Y)} \arrow[r] & F \arrow[r] & 0
\end{tikzcd} \] in $\mod \C$, which is the beginning of a projective resolution of $F$ in $\mod \C$. However, in this case, we have $\pdim F \leqslant 1$. So, instead of extending the above sequence to a projective resolution of $F$, we will reduce it to a projective resolution \[ \begin{tikzcd}
0 \arrow[r] & {\C(-,Z)} \arrow[r, "{\C(-,g)}"] &[1.2em] {\C(-,Y)} \arrow[r] & F \arrow[r] & 0
\end{tikzcd} \] of $F$ in $\mod \C$ by taking the kernel of $\C(-,Y) \to F$. This leads to a morphism $g \in \C(Z,Y)$ in $\C$, which should give the notion of a ``$0$-kernel'' of $f$.

Let us characterize the morphism $g$ obtained above. Under the conditions of the previous paragraph, observe that $\C(-,g)$ is nothing but the image of $\C(-,f)$ in $\mod \C$, hence there is a factorization $\C(-,f) = \C(-,g) \C(-,h)$ in $\mod \C$, where $h \in \C(X,Z)$ is such that $\C(-,h)$ is an epimorphism in $\mod \C$. But as $\C(-,Z)$ is projective in $\mod \C$, we conclude that $\C(-,h)$ is a split epimorphism, hence so is $h$, by the Yoneda lemma. Moreover, from the previous exact sequence in $\mod \C$, we also get that $g$ is a monomorphism in $\C$. Therefore, by the Yoneda lemma, we obtain a decomposition $f = gh$ in $\C$, where $h$ is a split epimorphism and $g$ is a monomorphism. Conversely, any decomposition $f = gh$ in $\C$ with $h$ a split epimorphism and $g$ a monomorphism leads to a factorization $\C(-,f) = \C(-,g) \C(-,h)$ in $\mod \C$, where $\C(-,h)$ is an epimorphism and $\C(-,g)$ is a monomorphism. In this case, $\C(-,g)$ is the image of $\C(-,f)$ in $\mod \C$, so that it gives a projective resolution of $F$ in $\mod \C$ as mentioned above.

If $\C$ is left coherent and $\gldim (\mod \C^{\op}) \leqslant 1$, then a similar discussion can be carried out, by considering $\mod \C^{\op}$ instead of $\mod \C$, which should lead to the notion of a ``$0$-cokernel'' of a morphism in $\C$.

Motivated by the above paragraphs, given an arbitrary category $\B$ and a morphism $f$ in $\B$, we define a \textit{$0$-kernel} of $f$ to be a monomorphism $g$ in $\B$ for which there is a decomposition $f = gh$ in $\B$, where $h$ is a split epimorphism. Dually, a \textit{$0$-cokernel} of $f$ is an epimorphism $s$ in $\B$ for which there is a decomposition  $f = rs$ in $\B$, where $r$ is a split monomorphism. We say that $\B$ \textit{has $0$-kernels} if every morphism in $\B$ has a $0$-kernel, and that $\B$ \textit{has $0$-cokernels} if every morphism in $\B$ has a $0$-cokernel.

We can now describe the axioms ($0$F1) and ($0$F1$^{\op}$) of a $0$-abelian category in terms of $0$-kernels and $0$-cokernels, respectively. In fact, consider the following axioms for the category $\C$:

\begin{enumerate}[leftmargin=3.8em]
    \item[\textup{($0$A1)}] $\C$ has $0$-kernels.
    \item[\textup{($0$A1$^{\op}$)}] $\C$ has $0$-cokernels.
\end{enumerate}

\begin{proposition}\label{proposition.1}
The axioms \textup{($0$F1)} and \textup{($0$F1$^{\op}$)} are equivalent to \textup{($0$A1)} and \textup{($0$A1$^{\op}$)}, respectively.
\end{proposition}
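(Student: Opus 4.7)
The plan is to prove the equivalence $(0\text{F}1) \Leftrightarrow (0\text{A}1)$; the dual equivalence $(0\text{F}1^{\op}) \Leftrightarrow (0\text{A}1^{\op})$ follows by applying the same argument to $\C^{\op}$. In both directions, the overall strategy is the one already sketched in the discussion preceding the proposition: via the Yoneda embedding, translate factorizations ``split epimorphism followed by monomorphism'' of a morphism $f$ in $\C$ into projective resolutions of length at most $1$ of $\Coker\C(-,f)$ in $\mod \C$, and conversely. The translation uses throughout that, since $\C$ is idempotent complete, the finitely generated projectives of $\mod \C$ are precisely the representable functors.

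For $(0\text{F}1) \Rightarrow (0\text{A}1)$, given $f \in \C(X,Y)$ I would form the cokernel sequence
\[
\C(-,X) \xrightarrow{\C(-,f)} \C(-,Y) \to F \to 0
\]
in $\mod \C$. Right coherence guarantees $F \in \mod \C$, and $\gldim(\mod \C) \leqslant 1$ forces $\pdim F \leqslant 1$, so the image of $\C(-,f)$ is projective, hence representable, say isomorphic to $\C(-,Z)$ for some $Z \in \C$. The induced factorization $\C(-,f) = \iota \pi$ through $\C(-,Z)$ then has $\iota$ a monomorphism and $\pi$ an epimorphism onto a projective object, hence split. Pulling this back through Yoneda yields $f = gh$ in $\C$ with $g$ a monomorphism and $h$ a split epimorphism, exhibiting $g$ as a $0$-kernel of $f$.

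For the converse $(0\text{A}1) \Rightarrow (0\text{F}1)$, I would take an arbitrary $F \in \mod \C$ with a presentation $\C(-,X) \xrightarrow{\C(-,f)} \C(-,Y) \to F \to 0$ (where $f$ comes from Yoneda), and use $(0\text{A}1)$ to factor $f = gh$ with $g : Z \to Y$ a monomorphism and $h : X \to Z$ a split epimorphism. Then $\C(-,g)$ is a monomorphism, $\C(-,h)$ is a split epimorphism, and $\C(-,g)$ realizes the image of $\C(-,f)$, producing the short exact sequence
\[
0 \to \C(-,Z) \to \C(-,Y) \to F \to 0.
\]
This simultaneously bounds $\pdim F$ by $1$ and, applied to an arbitrary morphism $\C(-,f)$ between representables, identifies $\Ker\C(-,f)$ with $\Ker\C(-,h)$, which is a direct summand of $\C(-,X)$ and in particular finitely generated.

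The only real technical point is the last inference that ``kernels of morphisms between representables are finitely generated'' implies right coherence of $\C$, i.e.\ that $\mod \C$ is closed under kernels in $\Mod \C$. This is exactly the criterion used in \cite{2409.10438}, so that step is essentially a citation. Beyond that, the argument is formal manipulation with the Yoneda lemma and idempotent completeness, and the main obstacle is simply making sure each passage between $\C$ and $\mod \C$ correctly tracks monomorphisms, split epimorphisms, projectivity, and representability.
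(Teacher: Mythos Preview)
Your proposal is correct and follows essentially the same route as the paper: both directions hinge on translating the factorization $f=gh$ (split epi followed by mono) into the statement that the image of $\C(-,f)$ is representable, and both pull this through the Yoneda lemma in the same way.

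The one place where you diverge is in establishing right coherence in the implication $(0\text{A}1)\Rightarrow(0\text{F}1)$. You argue functorially: since $\C(-,g)$ is a monomorphism, $\Ker\C(-,f)=\Ker\C(-,h)$, which is a direct summand of $\C(-,X)$ and hence representable by idempotent completeness; this gives weak kernels via the cited criterion. The paper instead constructs the kernel of $f$ explicitly inside $\C$: choosing $h'$ with $hh'=1$, it applies the $0$-kernel hypothesis a second time to $1-h'h$, writing $1-h'h=ab$ with $b$ split epi and $a$ mono, and checks directly that $a$ is a kernel of $f$ (and in fact a split monomorphism). Your argument is slightly slicker and stays in $\mod\C$; the paper's is more hands-on and yields the extra information that kernels in $\C$ exist and are split monomorphisms, which it exploits later (see the remark after the proof and Corollary~\ref{corollary.1}). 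Both are perfectly valid.
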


\begin{proof}
We only prove that ($0$F1) and ($0$A1) are equivalent. Once this is done, we can deduce that ($0$F1$^{\op}$) and ($0$A1$^{\op}$) are equivalent by considering opposite categories.

If $\C$ satisfies the axiom ($0$F1), then we have already seen in one of the previous paragraphs that every morphism in $\C$ has a $0$-kernel, so that $\C$ satisfies ($0$A1). Thus, it remains to prove the converse. In what follows, assume that $\C$ satisfies ($0$A1).

First, we will verify that $\C$ is right coherent. By \cite[Proposition 3.1]{2409.10438}, it suffices to show that $\C$ has weak kernels. Well, let $f \in \C(X,Y)$ be an arbitrary morphism in $\C$. Given that $\C$ has $0$-kernels, there is a decomposition $f = gh$ in $\C$, where $h$ is a split epimorphism and $g$ is a monomorphism. Let $h'$ be a morphism in $\C$ such that $hh' = 1$, and take a decomposition $1 - h'h = ab$ in $\C$, where $b$ is a split epimorphism and $a$ is a monomorphism. We claim that $a$ is a kernel of $f$. Indeed, $fab = gh(1 - h'h) = 0$, which implies that $fa = 0$ since $b$ is a split epimorphism. Moreover, if $w$ is a morphism in $\C$ for which $fw = 0$, then $ghw = 0$, which implies that $hw = 0$ as $g$ is a monomorphism. Thus, from $1 - h'h = ab$, we obtain that $w = abw$, so that $w$ factors through $a$ (in a unique way since $a$ is a monomorphism). Therefore, $a$ is a kernel of $f$.\footnote{Also, note that $a$ is a split monomorphism. In fact, because $1- h'h$ is idempotent and $1 - h'h = ab$, we have $abab = ab$. But $b$ is a split epimorphism and $a$ is a monomorphism, hence $ba = 1$.} Consequently, every morphism in $\C$ has a kernel, and hence a weak kernel.

Next, we will show that $\gldim (\mod \C) \leqslant 1$. For this, let $F \in \mod \C$ be arbitrary, and take a projective presentation \[ \begin{tikzcd}
{\C(-,X)} \arrow[r, "{\C(-,f)}"] &[1.2em] {\C(-,Y)} \arrow[r] & F \arrow[r] & 0
\end{tikzcd} \] of $F$ in $\mod \C$ with $f \in \C(X,Y)$. Since $\C$ has $0$-kernels, there is a decomposition $f = gh$ in $\C$, where $h$ is a split epimorphism and $g$ is a monomorphism. As it was remarked earlier, this leads to a decomposition $\C(-,f) = \C(-,g) \C(-,h)$ in $\mod \C$, where $\C(-,h)$ is an epimorphism and $\C(-,g)$ is a monomorphism, so that $\C(-,g)$ is the image of $\C(-,f)$. Thus, the kernel (object) of $\C(-,Y) \to F$ is projective, which implies that $\pdim F \leqslant 1$. Hence $\gldim (\mod \C) \leqslant 1$, and $\C$ satisfies the axiom ($0$F1).
\end{proof}

Observe that, in the proof of Proposition \ref{proposition.1}, we have also proved that if $\C$ has $0$-kernels, then $\C$ has kernels, and kernels in $\C$ are split monomorphisms. By duality, we also conclude that if $\C$ has $0$-cokernels, then $\C$ has cokernels, and cokernels in $\C$ are split epimorphisms. The converse of these statements are also true. Moreover, there is no need to assume that $\C$ is additive and idempotent complete to prove all these results. In fact, it suffices to assume that $\C$ is a preadditive category. Also, these results imply that $0$-abelian categories are precisely the split quasi-abelian categories, that is, quasi-abelian categories for which every short exact sequence splits. We will collect (and prove) these facts in a separate paper, \cite{0abelian}, which will be devoted to a comprehensive study of $0$-abelian categories without relying on the functorial approach.

Another basic fact that we can prove is that the existence of $0$-kernels or $0$-cokernels imply idempotent completeness, see Proposition \ref{proposition.2}. Therefore, we can characterize $0$-abelian categories as follows:

\begin{theorem}\label{theorem.3}
A category $\B$ is $0$-abelian if and only if $\B$ is additive and has both $0$-kernels and $0$-cokernels.
\end{theorem}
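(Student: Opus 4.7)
The plan is to reduce Theorem \ref{theorem.3} to Proposition \ref{proposition.1}, with the only extra ingredient being idempotent completeness. For the forward implication, if $\B$ is $0$-abelian then by definition $\B$ is additive, idempotent complete, and satisfies both (0F1) and (0F1$^{\op}$); Proposition \ref{proposition.1} then hands us $0$-kernels and $0$-cokernels.

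For the converse, suppose $\B$ is additive and satisfies (0A1) and (0A1$^{\op}$). A key observation is that the proof of Proposition \ref{proposition.1} in the direction (0A1) $\Rightarrow$ (0F1) (and its opposite) did not require idempotent completeness: both the construction of weak kernels from $0$-kernels and the argument that $\pdim F \leqslant 1$ for every $F \in \mod \B$ go through verbatim in the purely additive setting. Therefore, it suffices to establish that $\B$ is idempotent complete, after which Proposition \ref{proposition.1} supplies (0F1) and (0F1$^{\op}$), and hence $0$-abelianness.

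To split an idempotent $e \colon X \to X$, the plan is to apply (0A1) to $e$, obtaining a factorization $e = gh$ with $g$ a monomorphism and $h$ a split epimorphism; choose a right inverse $h'$ of $h$ with $hh' = 1$. The relation $e^{2} = e$ reads $ghgh = gh$, and left-cancelling the monomorphism $g$ yields $hgh = h$; postcomposing with $h'$ then gives $hg = 1$. Thus $gh = e$ and $hg = 1$ exhibit $e$ as a split idempotent, and the dual argument works from (0A1$^{\op}$) alone. This elementary manipulation is the only real obstacle; everything else in Theorem \ref{theorem.3} is already bundled into Proposition \ref{proposition.1}.
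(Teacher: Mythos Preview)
Your proof is correct and follows the same route as the paper, which simply cites Propositions \ref{proposition.1} and \ref{proposition.2}; your idempotent-splitting argument is exactly the proof of the implication (b)$\Rightarrow$(a) in Proposition \ref{proposition.2}. The ``key observation'' that the proof of Proposition \ref{proposition.1} does not require idempotent completeness is correct but redundant, since you establish idempotent completeness before invoking that proposition anyway.
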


\begin{proof}
Follows from Propositions \ref{proposition.1} and \ref{proposition.2}.
\end{proof}

We can consider Theorem \ref{theorem.3} as the culmination of our efforts to understand what a $0$-abelian category is. The cherry on top of it is the fact that we can also describe $0$-kernels and $0$-cokernels in terms of universal properties, which are similar to the ones for kernels and cokernels, see Proposition \ref{proposition.10} and the paragraph proceeding its proof. In particular, we conclude that $0$-kernels and $0$-cokernels are unique up to isomorphism (although it is not difficult to prove their uniqueness directly from their definitions).

\subsection{An interlude}\label{subsection.2.2}
One could now shift perspectives and study $0$-abelian categories through their description given in Theorem \ref{theorem.3}. For instance, we can use it to conclude the following result, which says that the property of being $0$-abelian is ``hereditary''.

\begin{corollary}\label{corollary.1}
If $\C$ is a $0$-abelian category, then every subcategory of $\C$ that is closed under finite direct sums and direct summands is $0$-abelian.
\end{corollary}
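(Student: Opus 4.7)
The plan is to reduce to Theorem \ref{theorem.3}, so that it suffices to verify that any full subcategory $\D \subseteq \C$ closed under finite direct sums and direct summands is additive and admits both $0$-kernels and $0$-cokernels. Additivity of $\D$ is automatic: the empty direct sum supplies a zero object, binary direct sums exist by hypothesis, and the abelian-group enrichment on $\Hom$-sets is inherited from $\C$.

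For $0$-kernels, I would take an arbitrary morphism $f \in \D(X,Y)$ and use that $\C$ is $0$-abelian to obtain a decomposition $f = gh$ in $\C$ with $h \colon X \to Z$ a split epimorphism and $g \colon Z \to Y$ a monomorphism. The crucial observation is that $Z$ is a direct summand of $X$: any section $h' \colon Z \to X$ of $h$ realizes $Z$ as the splitting of the idempotent $h'h$ on $X$, so $X \cong Z \oplus W$ in $\C$ for some $W$. Since $\D$ is closed under direct summands and $X \in \D$, this forces $Z \in \D$. Fullness then places $h$, $h'$, and $g$ inside $\D$, so $h$ remains a split epimorphism there, while $g$, being a monomorphism in $\C$, is automatically a monomorphism in the full subcategory $\D$. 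Hence $g$ is a $0$-kernel of $f$ in $\D$.

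The argument for $0$-cokernels is completely dual: a decomposition $f = rs$ in $\C$ with $s$ an epimorphism and $r \colon W \to Y$ a split monomorphism exhibits $W$ as a direct summand of $Y$ (via $r$ together with any retraction), hence $W \in \D$, and $f = rs$ becomes a $0$-cokernel decomposition in $\D$ by the same fullness considerations. Theorem \ref{theorem.3} then yields that $\D$ is $0$-abelian.

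The only substantive point is recognizing that the intermediate object in the $0$-kernel (respectively, $0$-cokernel) decomposition sits as a direct summand of the domain (respectively, codomain) of $f$ in $\C$; this is precisely what the closure hypothesis on $\D$ is tailored to accommodate, so no further obstacles arise. Everything else amounts to the routine observation that split (epi/mono) and (mono/epi) properties restrict from a category to any full subcategory containing the relevant objects.
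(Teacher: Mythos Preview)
Your proof is correct and follows the same approach as the paper: reduce to Theorem~\ref{theorem.3} and observe that a $0$-kernel (respectively, $0$-cokernel) of a morphism in $\D$ computed in $\C$ already lies in $\D$ because its intermediate object is a direct summand of the domain (respectively, codomain). The paper's argument is simply a terser version of yours, asserting this restriction step as ``clear'' without spelling out the direct-summand observation.
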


\begin{proof}
Assume that $\C$ is a $0$-abelian category, and let $\B$ be a subcategory of $\C$ that is closed under finite direct sums and direct summands. It is clear that a $0$-kernel and a $0$-cokernel of a morphism in $\B$ taken in $\C$ is also, respectively, a $0$-kernel and a $0$-cokernel of it in $\B$. Therefore, it follows from Theorem \ref{theorem.3} that $\B$ is $0$-abelian.
\end{proof}

However, in order to keep consistency, we will explore the description of $0$-abelian categories given in Theorem \ref{theorem.3} in the separate paper \cite{0abelian} that we have already mentioned. For now, we will continue to follow the functorial approach. In particular, let us mention that, through this approach, van Roosmalen proved in \cite[Proposition 4.2]{MR2373322} a stronger converse of Corollary \ref{corollary.1}, which leads to the result below, that shows that $0$-abelianness is a ``local-to-global'' property.

\begin{proposition}\label{proposition.14}
Let $\C$ be an additive and idempotent complete category. Then $\C$ is $0$-abelian if and only if $\add X$ is $0$-abelian for every $X \in \C$.
\end{proposition}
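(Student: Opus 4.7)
The forward direction is immediate from Corollary \ref{corollary.1}, since $\add X$ is closed under finite direct sums and direct summands for every $X \in \C$. The substantive content is the converse, and I plan to prove it through Theorem \ref{theorem.3} by showing that $\C$ has $0$-kernels; the existence of $0$-cokernels follows by a formally dual argument.

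Fix a morphism $f \in \C(X, Y)$ and set $Z = X \oplus Y$. Since $\add Z$ is $0$-abelian by hypothesis, the morphism $f$ (which is a morphism in $\add Z$, because $X$ and $Y$ are direct summands of $Z$) admits a $0$-kernel there: we obtain a factorization $f = gh$ in $\add Z$ with $h \colon X \to W$ a split epimorphism and $g \colon W \to Y$ a monomorphism in $\add Z$. Both the equation $f = gh$ and the existence of a right inverse to $h$ transfer verbatim to $\C$, so $h$ remains a split epimorphism in $\C$. What remains, and what constitutes the heart of the argument, is to verify that $g$ is a monomorphism in $\C$ as well.

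To this end, let $U \in \C$ be arbitrary and set $Z' = Z \oplus U$, so that $W, U \in \add Z'$ and $\add Z \subseteq \add Z'$. By hypothesis $\add Z'$ is $0$-abelian, so it has kernels, and, by the remark following Proposition \ref{proposition.1}, these kernels are split monomorphisms in $\add Z'$ (hence in $\C$). Let $a \colon K \to W$ be the kernel of $g$ in $\add Z'$. Since $a$ is a split monomorphism in $\C$, the object $K$ is a direct summand of $W$; but $W \in \add Z$, so $K \in \add Z$ as well. Consequently $a$ is a morphism entirely in $\add Z$ satisfying $ga = 0$, and since $g$ is already a monomorphism in $\add Z$ we conclude that $a = 0$. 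Together with $a$ being a monomorphism, this forces $K = 0$, so $g$ is a monomorphism in $\add Z'$, and in particular $\C(U, g)$ is injective. As $U$ was arbitrary, $g$ is a monomorphism in $\C$, completing the verification.

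The only genuine obstacle is this monomorphy test, because the factorization and the splitness of $h$ are preserved when enlarging the ambient category. The trick is to resize $\add Z$ so that it also contains any potential witness $U$ to the failure of monomorphy, and then to exploit that kernels in the resulting $0$-abelian category split in order to push the witness back into $\add Z$, where $g$ was already known to be mono. I expect the cokernel half of the argument to follow by the same pattern applied to $\C^{\op}$.
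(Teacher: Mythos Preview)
Your argument is correct. The forward direction matches the paper, and your converse is sound: the key step---bootstrapping the monomorphy of $g$ from $\add Z$ to all of $\C$ by enlarging to $\add Z'$ and using that kernels in a $0$-abelian category are split monomorphisms---works exactly as you describe. The only point worth making explicit is that $K \in \add Z$ follows because $\add Z$ is closed under retracts (you say ``direct summand'', which is fine since $\C$ is idempotent complete), and that $K = 0$ implies $g$ is a monomorphism in $\add Z'$ because the latter has kernels.

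Your route differs from the paper's. The paper does not argue the converse directly; it cites van~Roosmalen's \cite[Proposition~4.2]{MR2373322}, which is proved via the functorial approach (showing the relevant condition on $\mod \C$ is detected locally on each $\mod(\add X)$). Your proof instead stays at the level of $\C$ and uses only Theorem~\ref{theorem.3} together with the remark after Proposition~\ref{proposition.1} that kernels in a $0$-abelian category split. This makes your argument fully self-contained within the present paper and more elementary---it avoids any appeal to $\mod \C$ or to an external reference. The trade-off is that van~Roosmalen's result is stated in greater generality, but for the purposes of this proposition your direct argument is cleaner and arguably preferable.
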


\begin{proof}
Follows from Corollary \ref{corollary.1} and \cite[Proposition 4.2]{MR2373322}.
\end{proof}

We remark that Corollary \ref{corollary.1} and Proposition \ref{proposition.14} were also proved by Auslander and Reiten in \cite[Propositions 1.4 and 1.5]{MR404240} for the case where there is a commutative artinian ring $R$ such that $\C$ is $R$-linear and $\C(X,Y)$ is finitely generated as an $R$-module for every $X,Y \in \C$. Moreover, in our context, \cite[Theorem 1.6]{MR404240} becomes the following:

\begin{corollary}\label{corollary.2}
Let $\C$ be an additive and idempotent complete category. Then $\C$ is $0$-abelian if and only if the endomorphism ring of every object in $\C$ is semi-hereditary.\footnote{See Section \ref{section.5} for the definition of a ``semi-hereditary'' ring.}
\end{corollary}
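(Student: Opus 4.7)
The plan is to reduce the statement to the endomorphism ring of a single object via Proposition \ref{proposition.14}, and then transport the functorial axioms of $0$-abelianness across a standard Morita-type equivalence to the usual ring-theoretic characterization of semi-hereditariness. More explicitly, by Proposition \ref{proposition.14} it suffices to prove that, for a fixed $X \in \C$, the category $\add X$ is $0$-abelian if and only if $R := \End_{\C}(X)$ is semi-hereditary; the corollary then follows by ranging over all $X$.

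Next, I would invoke the well-known equivalences of abelian categories
\[ \mod(\add X) \simeq \mod R \quad \text{and} \quad \mod((\add X)^{\op}) \simeq \mod R^{\op}, \]
given by evaluating a finitely presented (contra)functor at $X$. Under these equivalences, representables correspond to finitely generated projective modules, and a projective presentation of a functor corresponds to a projective presentation of the associated module, so finite presentation, coherence, and projective dimension are all preserved. In particular, $\add X$ is right (respectively, left) coherent if and only if $R$ is right (respectively, left) coherent, and $\gldim \mod(\add X) = \gldim \mod R$, with the analogous equality on the opposite side.

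With this dictionary in place, I would unpack the definition: $\add X$ is $0$-abelian if and only if axioms ($0$F1) and ($0$F1$^{\op}$) hold for $\add X$, which, translated via the equivalences above, is exactly the statement that $R$ is coherent on both sides and that every finitely presented $R$-module (on either side, using \cite[Theorem A.5]{2409.10438}) has projective dimension at most $1$. By the standard characterization of semi-hereditary rings that will be recalled in Section \ref{section.5}, this last condition is equivalent to $R$ being semi-hereditary, completing the equivalence.

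The main obstacle is purely bookkeeping: namely, matching the evaluation-at-$X$ equivalence carefully on both the covariant and contravariant sides, and making sure the definition of semi-hereditary used in Section \ref{section.5} lines up with the condition "coherent plus $\gldim \mod R \leqslant 1$". Both facts are classical (and \cite[Theorem 1.6]{MR404240} already records essentially the same correspondence in the $R$-linear artinian setting), so beyond citing the appropriate references I do not expect any genuine difficulty.
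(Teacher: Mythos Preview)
Your proposal is correct and follows essentially the same approach as the paper: both reduce to $\add X$ via Proposition~\ref{proposition.14} and then translate the axioms $(0\mathrm{F}1)$ and $(0\mathrm{F}1^{\op})$ across the evaluation-at-$X$ equivalence to the ring-theoretic characterization of semi-hereditariness. The only cosmetic difference is that the paper packages the second step as a forward reference to Theorem~\ref{theorem.7} (via the equivalence $\add X \approx \proj \End(X)$), whereas you unpack that step inline using $\mod(\add X) \simeq \mod \End(X)$ and the characterization recorded in Proposition~\ref{proposition.26}.
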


\begin{proof}
For each $X \in \C$, the categories $\add X$ and $\proj \End(X)$ are equivalent, where $\End(X)$ denotes the endomorphism ring of $X$ in $\C$, see \cite[Section 8]{2409.10438}. Therefore, the result follows from Proposition \ref{proposition.14} and Theorem \ref{theorem.7}.
\end{proof}

More results like Corollary \ref{corollary.2}, which relate $0$-abelian categories and ``semi-hereditary'' rings, will be discussed in Section \ref{section.5}. For the moment, we move on to extend some basic results on $n$-abelian categories for $n$ positive to the case $n = 0$.

\subsection{The von Neumann regular case}\label{subsection.2.3}

As it was pointed out in \cite[Remark 3.4]{MR3519980} and \cite[Proposition 4.7]{2409.10438}, von Neumann regular categories are the categories that are $n$-abelian for every positive integer $n$. We can now extend this statement and say that these are the categories that are $n$-abelian for every nonnegative integer $n$. Let us explain how we can conclude this using the functorial approach.

Recall from \cite[Proposition 4.1]{2409.10438} that $\C$ is von Neumann regular if and only if $\C$ is right coherent and $\gldim (\mod \C) = 0$, which is the case if and only if $\C$ is left coherent and $\gldim (\mod \C^{\op}) = 0$. Thus, obviously, every von Neumann regular category is $0$-abelian, and if $\C$ is a $0$-abelian category which is not von Neumann regular, then $\gldim (\mod \C) = \gldim (\mod \C^{\op}) = 1$.\footnote{Note that this last statement is the case $n = 0$ of \cite[Corollary 4.6]{2409.10438}.} Therefore, we can easily adapt the proof and the statement of \cite[Proposition 4.7]{2409.10438} to obtain the result below.

\begin{proposition}
Let $\C$ be an additive and idempotent complete category. The following are equivalent:
\begin{enumerate}
    \item[(a)] $\C$ is von Neumann regular.
    \item[(b)] $\C$ is $n$-abelian for every nonnegative integer $n$.
    \item[(c)] There are two distinct nonnegative integers $m$ and $n$ for which $\C$ is $m$-abelian and $n$-abelian.
\end{enumerate}
\end{proposition}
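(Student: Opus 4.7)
The plan is to prove the cycle (a) $\Rightarrow$ (b) $\Rightarrow$ (c) $\Rightarrow$ (a), mirroring the proof of \cite[Proposition 4.7]{2409.10438} but allowing $n = 0$ throughout.

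For (a) $\Rightarrow$ (b), recall that von Neumann regularity is equivalent to $\C$ being coherent with $\gldim(\mod \C) = \gldim(\mod \C^{\op}) = 0$. Hence ($n$F1) and ($n$F1$^{\op}$) hold for every nonnegative integer $n$, since $0 \leqslant n+1$. When $n = 0$ this already suffices, as ($0$F2) and ($0$F2$^{\op}$) are empty statements, as observed in Section \ref{section.1}. When $n \geqslant 1$, the axioms ($n$F2) and ($n$F2$^{\op}$) hold because every object of $\mod \C$ (respectively $\mod \C^{\op}$) is projective, and every projective object $P$ is trivially a syzygy, via the exact sequence $0 \to P \to P \to 0 \to 0$. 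The implication (b) $\Rightarrow$ (c) is immediate, taking for instance $m = 0$ and $n = 1$.

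For (c) $\Rightarrow$ (a), I would assume without loss of generality that $m < n$, so that $n \geqslant 1$. From ($m$F1), $\gldim(\mod \C) \leqslant m + 1$, so every $F \in \mod \C$ satisfies $\pdim F \leqslant m + 1 \leqslant n$. Applying ($n$F2), every such $F$ fits into a short exact sequence $0 \to F \to P \to H \to 0$ in $\mod \C$ with $P$ projective. Since $H \in \mod \C$ is itself a syzygy by the same reasoning, splicing the resulting short exact sequences and iterating this step $m+1$ times shows that $F$ is the leftmost term of an exact sequence $0 \to F \to P_{0} \to P_{1} \to \cdots \to P_{m} \to G \to 0$ in $\mod \C$ with each $P_{i}$ projective and some $G \in \mod \C$. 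Since $\pdim G \leqslant m + 1$, the generalized Schanuel lemma implies that $F$ is a direct summand of a projective, hence itself projective. Therefore $\gldim(\mod \C) = 0$, and $\C$ is von Neumann regular.

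The argument is a routine adaptation of the cited proof. The only step requiring some care is the splicing of syzygy sequences and the subsequent comparison, via Schanuel's lemma, with a projective resolution of $G$ of length at most $m + 1$; no substantive obstacle is anticipated.
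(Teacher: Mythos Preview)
Your proposal is correct and takes essentially the same approach as the paper, which simply defers to the proof of \cite[Proposition 4.7]{2409.10438} together with the preceding remarks on the $0$-abelian case. You have spelled out the details of that deferred argument, handling the new case $m=0$ cleanly via the observation that $(0\mathrm{F2})$ is vacuous, and your Schanuel-type comparison in (c) $\Rightarrow$ (a) is exactly the intended mechanism.
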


\begin{proof}
Follows from the proof of \cite[Proposition 4.7]{2409.10438} and the previous comments.
\end{proof}

Another way to characterize von Neumann regular categories is to say that they are the $n$-abelian categories whose $n$-exact sequences split. This result was essentially first proved in \cite[Theorem 3.9]{MR3519980} for the case when $n$ is a positive integer, and later proved in \cite[Proposition C.4]{2409.10438} through functorial methods. Now, we can also prove it for the case $n = 0$, as we show in Proposition \ref{proposition.11}. To do this, we first need to explain what a ``$0$-exact sequence'' is, which we do in the next subsection.

\subsection{\texorpdfstring{$0$}{0}-Exact sequences}

Recall from \cite[Definition 2.4]{MR3519980} that an \textit{$n$-exact sequence} in $\C$ is a sequence \[ \begin{tikzcd}
Z_{n+1} \arrow[r, "h_{n+1}"] &[0.7em] Z_{n} \arrow[r, "h_{n}"] &[0.05em] \cdots \arrow[r, "h_{2}"] & Z_{1} \arrow[r, "h_{1}"] & Z_{0}
\end{tikzcd} \] of morphisms in $\C$ such that {\relsize{-0.5} \[ \begin{tikzcd}
0 \arrow[r] & {\C(-,Z_{n+1})} \arrow[r, "{\C(-,h_{n+1})}"] &[2.7em] {\C(-,Z_{n})} \arrow[r, "{\C(-,h_{n})}"] &[1.85em] \cdots \arrow[r, "{\C(-,h_{2})}"] &[1.8em] {\C(-,Z_{1})} \arrow[r, "{\C(-,h_{1})}"] &[1.8em] {\C(-,Z_{0})}
\end{tikzcd} \]} \hspace{-0.667em} and {\relsize{-0.5} \[ \begin{tikzcd}
0 \arrow[r] & {\C(Z_{0},-)} \arrow[r, "{\C(h_{1},-)}"] &[1.8em] {\C(Z_{1},-)} \arrow[r, "{\C(h_{2},-)}"] &[1.8em] \cdots \arrow[r, "{\C(h_{n},-)}"] &[1.85em] {\C(Z_{n},-)} \arrow[r, "{\C(h_{n+1},-)}"] &[2.7em] {\C(Z_{n+1},-)}
\end{tikzcd} \]} \hspace{-0.57em} are exact sequences in $\Mod \C$ and in $\Mod \C^{\op}$, respectively. Although $n$-exact sequences were originally defined for a positive integer $n$, we can also consider them when $n = 0$. By doing so, we see that a \textit{$0$-exact sequence} in $\C$ is a morphism in $\C$ which is both a monomorphism and an epimorphism, that is, a \textit{bimorphism}.

Observe that we might consider the definition of a $0$-exact sequence in an arbitrary category, since it is nothing but a bimorphism. Also, given that the term ``bimorphism'' is already well established, we will preferably use it to refer to a $0$-exact sequence.

Following \cite[Appendix C]{2409.10438}, we say that a bimorphism \textit{splits} if it is both a split monomorphism and a split epimorphism, that is, if it is an isomorphism. Clearly, a bimorphism splits if and only if it is a split epimorphism if and only if it is a split monomorphism, and these facts constitute the case $n = 0$ of \cite[Proposition C.2]{2409.10438}. Furthermore, observe that a category has the property that all of its bimorphisms split if and only if the category is balanced.

Now that we have provided the above definitions and remarks, we could aim to extend results on $n$-abelian categories and their $n$-exact sequences to the case $n = 0$. With this in mind, it is worth recalling from \cite[Propositions C.1 and C.3]{2409.10438} that if $\C$ is an $n$-abelian category with $n$ positive, and if $m$ is a nonnegative integer which is different from $n$, then every $m$-exact sequence in $\C$ splits. It turns out that the same result holds when $n = 0$, and we state it below. Therefore, when dealing with $n$-abelian categories and $m$-exact sequences in it, one should indeed focus their attention to the case $m = n$, even if $n = 0$.

\begin{proposition}
If $\C$ is a $0$-abelian category and $m$ is a positive integer, then every $m$-exact sequence in $\C$ splits.
\end{proposition}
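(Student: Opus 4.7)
The strategy is to adapt the functorial argument of \cite[Propositions C.1 and C.3]{2409.10438} to the case $n = 0$. Let
\[ \begin{tikzcd} Z_{m+1} \arrow[r, "h_{m+1}"] & Z_m \arrow[r, "h_m"] & \cdots \arrow[r, "h_1"] & Z_0 \end{tikzcd} \]
be an $m$-exact sequence in $\C$, with $m \geqslant 1$. By definition, applying $\C(-,-)$ yields an exact sequence
\[ \begin{tikzcd} 0 \arrow[r] & \C(-,Z_{m+1}) \arrow[r] & \C(-,Z_m) \arrow[r] & \cdots \arrow[r] & \C(-,Z_1) \arrow[r] & \C(-,Z_0) \end{tikzcd} \]
in $\Mod \C$. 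Letting $F \in \mod \C$ denote the cokernel of $\C(-,h_1)$, this augments to a projective resolution of $F$ in $\mod \C$ of length $m+1 \geqslant 2$.

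Since $\C$ is $0$-abelian, we have $\pdim F \leqslant \gldim(\mod \C) \leqslant 1$. A standard homological argument then forces every syzygy $\Omega^{i} F$ with $i \geqslant 1$ to be projective: the first syzygy is projective because $\pdim F \leqslant 1$, and inductively the short exact sequence $0 \to \Omega^{i+1} F \to P_i \to \Omega^{i} F \to 0$ splits whenever $\Omega^{i} F$ is projective, so $\Omega^{i+1} F$ is a direct summand of $P_i$. In particular, each of the sequences
\[ \begin{tikzcd} 0 \arrow[r] & \C(-,Z_{j+1}) \arrow[r, "{\C(-,h_{j+1})}"] &[1.5em] \C(-,Z_j) \arrow[r] & \Omega^{j} F \arrow[r] & 0 \end{tikzcd} \]
for $1 \leqslant j \leqslant m$ is a split short exact sequence in $\mod \C$.

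At this point it should follow, exactly as in \cite[Propositions C.1 and C.3]{2409.10438}, that the Yoneda image of the $m$-exact sequence decomposes as a direct sum of trivial pieces, which via the Yoneda lemma translates back into a splitting of the original sequence in $\C$. Concretely, $\C(-,h_{m+1})$ is a split monomorphism in $\mod \C$, hence $h_{m+1}$ is a split monomorphism in $\C$ by the Yoneda lemma, and analogous statements hold for the lower morphisms after passing to the appropriate summands.

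The only delicate point, and what I expect to be the main obstacle, is matching our setup to the precise notion of ``splitting'' for $m$-exact sequences used in \cite[Appendix C]{2409.10438}. Once that bookkeeping is in place, the result is essentially immediate from the observation that the global dimension bound $\gldim(\mod \C) \leqslant 1$ is incompatible with the existence of a nontrivial projective resolution of length $\geqslant 2$, so every such resolution must collapse in the way described above.
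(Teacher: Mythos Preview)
Your proposal is correct and matches the paper's approach exactly: the paper's proof consists of the single sentence ``The proof of \cite[Proposition C.3]{2409.10438} carries over,'' and you have sketched precisely how that argument goes through when the bound $\gldim(\mod\C)\leqslant n+1$ is replaced by $\gldim(\mod\C)\leqslant 1$. The ``delicate point'' you flag about matching the notion of splitting is handled in \cite[Appendix C]{2409.10438} and requires no new idea here.
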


\begin{proof}
The proof of \cite[Proposition C.3]{2409.10438} carries over.
\end{proof}

Next, we prove the result that was mentioned in the end of Subsection \ref{subsection.2.3} for $n = 0$.

\begin{proposition}\label{proposition.11}
Let $\C$ be an additive and idempotent complete category. Then $\C$ is $0$-abelian and every bimorphism in $\C$ splits if and only if $\C$ is von Neumann regular.
\end{proposition}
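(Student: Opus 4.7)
The plan is to reduce both directions to the characterization of von Neumann regularity as $\gldim(\mod \C) = 0$ from \cite[Proposition 4.1]{2409.10438}, using the more concrete reformulation that this holds if and only if every morphism $f \in \C(X,Y)$ factors as $f = gh$ with $h$ a split epimorphism and $g$ a split monomorphism. Indeed, if every $F \in \mod \C$ is projective, then for a projective presentation of $F = \Coker \C(-,f)$ the image of $\C(-,f)$ is a projective summand of $\C(-,Y)$, which by idempotent completeness and the Yoneda lemma yields such a factorization; conversely, given any factorization of this form, $\Coker \C(-,f) = \Coker \C(-,g)$ is a summand of $\C(-,Y)$, hence projective.

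For the ``if'' direction, assume $\C$ is von Neumann regular. The factorization above immediately supplies both a $0$-kernel and a $0$-cokernel of every morphism, so $\C$ is $0$-abelian by Theorem \ref{theorem.3}. If $f$ is a bimorphism and $f = gh$ is a factorization as above, then $h$ is a split epimorphism that is also a monomorphism (since $gh$ is mono), hence an isomorphism; then $g = fh^{-1}$ is both epi and a split monomorphism, hence also an isomorphism, so $f$ itself is an isomorphism and splits.

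For the ``only if'' direction, assume $\C$ is $0$-abelian and every bimorphism in $\C$ splits, and let $f \in \C(X,Y)$. First I take a $0$-kernel of $f$, producing $f = gh$ with $h$ a split epimorphism and $g$ a monomorphism. Next I apply a $0$-cokernel to $g$, producing $g = rs$ with $r$ a split monomorphism and $s$ an epimorphism. Since $g$ and $r$ are monomorphisms, so is $s$; thus $s$ is a bimorphism, and by hypothesis it splits and is therefore an isomorphism. Consequently $g = rs$ is a split monomorphism, and $f = gh$ has the form required by the initial reduction, so $\C$ is von Neumann regular. The key step, and what I expect to be the main subtlety, is precisely this promotion of the bare monomorphism $g$ arising from the $0$-kernel to a split monomorphism, which is achieved by feeding $g$ back into a $0$-cokernel and recognizing that the epimorphic factor is forced to be a bimorphism.
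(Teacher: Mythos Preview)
Your proof is correct and follows essentially the same approach as the paper: in both, the crucial step is to apply a $0$-cokernel to a monomorphism, observe that the resulting epimorphic factor is a bimorphism and hence an isomorphism, and deduce that the monomorphism splits. The only cosmetic difference is that the paper obtains its monomorphism from a length-one projective resolution in $\mod\C$, whereas you first pass through a $0$-kernel of an arbitrary morphism to reach the same point.
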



\begin{proof}
Suppose that $\C$ is $0$-abelian and that every bimorphism in $\C$ splits. Let $F \in \mod \C$ be arbitrary, and take a projective resolution \[ \begin{tikzcd}
0 \arrow[r] & {\C(-,X)} \arrow[r, "{\C(-,f)}"] &[1.2em] {\C(-,Y)} \arrow[r] & F \arrow[r] & 0
\end{tikzcd} \] of $F$ in $\mod \C$, where $f \in \C(X,Y)$. We know from Theorem \ref{theorem.3} that $f$ has a $0$-cokernel, hence there is a decomposition $f = rs$ in $\C$, where $s$ is an epimorphism and $r$ is a split monomorphism. But note that $f$ is a monomorphism, hence so is $s$. Thus, $s$ is a bimorphism, which must split. Consequently, $f$ is a split monomorphism, which implies that $\C(-,f)$ is a split monomorphism, so that the above short exact sequence in $\mod \C$ splits. Therefore, $F$ is projective, and we conclude that $\gldim (\mod \C) = 0$.\footnote{We can also use Proposition \ref{proposition.13} to show that $F$ is projective. Indeed, we get that $F \simeq \mr(g) \oplus \C(-,Z)$ for some bimorphism $g$ in $\C$ and some $Z \in \C$. But $g$ splits, hence $\mr(g) = 0$ and $F \simeq \C(-,Z)$.} By \cite[Proposition 4.1]{2409.10438}, $\C$ is von Neumann regular.

Next, assume that $\C$ is von Neumann regular. It follows from \cite[Proposition 4.1]{2409.10438} that $\C$ is $0$-abelian with $\gldim (\mod \C) = 0$. Moreover, given a bimorphism $f \in \C(X,Y)$, we can consider a short exact sequence in $\mod \C$ as above by letting $F$ be the cokernel (object) of $\C(-,f)$. However, in this case, $F$ is projective, so that this short exact sequence splits. Therefore, $\C(-,f)$ is a split monomorphism, hence so is $f$, by the Yoneda lemma. Consequently, the bimorphism $f$ splits.
\end{proof}

In other words, Proposition \ref{proposition.11} says that a category is $0$-abelian and balanced if and only if it is von Neumann regular.

More results about bimorphisms in $0$-abelian categories will be discussed in \mbox{Section \ref{section.4}.} But before that, let us examine the nature of $0$-kernels and $0$-cokernels in Section \ref{section.3}, specifically, the factorizations that they induce on a given morphism.


\section{Factorizations of morphisms}\label{section.3}

We know from Theorem \ref{theorem.3} that a $0$-abelian category is an additive category for which all of its morphisms can be decomposed both as a split epimorphism followed by a monomorphism and as an epimorphism followed by a split monomorphism. In this section, we investigate how these decompositions compare with the factorization of a morphism in an abelian category as an epimorphism followed by a monomorphism. Moreover, we explain how the cases $n = 0$ and $n = 1$ might help to understand the existence of similar factorizations of morphisms in an $n$-abelian category when $n \geqslant 2$. Since our approach is functorial, we begin by pointing out how to relate epimorphisms and monomorphisms in $\C$ to objects in the category $\mod \C$.

Recall that $F \in \mod \C$ is called \textit{effaceable} if $F^{\ast} = 0$, that is, $\Hom(F,\C(-,X)) = 0$ for all $X \in \C$. The following result is folklore, and goes back to \cite[Proposition 3.2]{MR0212070}.

\begin{proposition}\label{proposition.6}
Let $F \in \mod \C$. The following are equivalent:
\begin{enumerate}
    \item[(a)] If $f$ is a morphism in $\C$ such that $F \simeq \mr(f)$, then $f$ is an epimorphism.
    \item[(b)] There is an epimorphism $f$ in $\C$ such that $F \simeq \mr(f)$.
    \item[(c)] $F^{\ast} = 0$.
\end{enumerate}
\end{proposition}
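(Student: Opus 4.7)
The proposition equates three facets of the same phenomenon: the ``epimorphic'' nature of $f$ (on either one or all representatives of $F$), and the vanishing of the dual $F^{\ast}$. The plan is to fold $(a) \Leftrightarrow (b) \Leftrightarrow (c)$ into a single argument centred on applying $\Hom_{\mod \C}(-, \C(-,Z))$ to a projective presentation of $F$ and then invoking Yoneda.

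First I would dispatch $(a) \Rightarrow (b)$ by the trivial observation that every $F \in \mod \C$ admits a projective presentation
\[ \begin{tikzcd}
{\C(-,X)} \arrow[r, "{\C(-,f)}"] &[1.2em] {\C(-,Y)} \arrow[r] & F \arrow[r] & 0,
\end{tikzcd} \]
so there does exist some $f \in \C(X,Y)$ with $F \simeq \mr(f)$; (a) then forces that $f$ to be an epimorphism.

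For the core of the argument, fix any $f \in \C(X,Y)$ with $F \simeq \mr(f)$ and any $Z \in \C$. Apply $\Hom_{\mod \C}(-, \C(-,Z))$ to the presentation above to obtain the left exact sequence
\[ \begin{tikzcd}[column sep=1.2em]
0 \arrow[r] & {\Hom(F, \C(-,Z))} \arrow[r] & {\Hom(\C(-,Y), \C(-,Z))} \arrow[r] & {\Hom(\C(-,X), \C(-,Z))}.
\end{tikzcd} \]
By the Yoneda lemma, this identifies with
\[ \begin{tikzcd}
0 \arrow[r] & {\Hom(F, \C(-,Z))} \arrow[r] & {\C(Y,Z)} \arrow[r, "{\C(f,Z)}"] & {\C(X,Z)},
\end{tikzcd} \]
so that $\Hom(F, \C(-,Z))$ is precisely the kernel of precomposition with $f$. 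This kernel vanishes for all $Z$ if and only if $f$ is an epimorphism. From this single identification I read off both remaining implications: $(b) \Rightarrow (c)$ by taking the particular $f$ supplied by (b) (its being an epimorphism makes every such kernel zero, hence $F^{\ast} = 0$), and $(c) \Rightarrow (a)$ by applying the equivalence to an arbitrary $f$ with $F \simeq \mr(f)$ (the hypothesis $F^{\ast} = 0$ kills the kernel, so $f$ must be an epimorphism).

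There is no real obstacle here: the argument is essentially a single application of the left exactness of $\Hom(-, \C(-,Z))$ together with Yoneda. The only point to watch is to treat $(c)$ as the uniform quantified statement ``$\Hom(F, \C(-,Z)) = 0$ for every $Z \in \C$'' so that the conclusion $f$ epimorphism (which requires injectivity of $\C(f,Z)$ for every $Z$) follows cleanly from the preceding identification.
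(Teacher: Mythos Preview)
Your proposal is correct and follows essentially the same approach as the paper: apply $\Hom(-,\C(-,Z))$ (equivalently, the functor $(-)^{\ast}$) to a projective presentation of $F$, invoke Yoneda, and read off the equivalence between $F^{\ast}=0$ and $f$ being an epimorphism. The only cosmetic difference is that the paper proves $(b)\Rightarrow(c)$ explicitly and leaves $(c)\Rightarrow(a)$ as ``a similar argument'', whereas you package both directions into a single identification of $\Hom(F,\C(-,Z))$ with $\Ker\C(f,Z)$.
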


\begin{proof}
Trivially, (a) implies (b). Below, we show that (b) implies (c), and one can verify that (c) implies (a) by using a similar argument.

Suppose that there is an epimorphism $f \in \C(X,Y)$ in $\C$ such that $F \simeq \mr(f)$, so that there is an exact sequence \[ \begin{tikzcd}
{\C(-,X)} \arrow[r, "{\C(-,f)}"] &[1.2em] {\C(-,Y)} \arrow[r] & F \arrow[r] & 0
\end{tikzcd} \] in $\Mod \C$. By applying the functor $(-)^{\ast}$ to this sequence, we obtain an exact sequence \[ \begin{tikzcd}
0 \arrow[r] & F^{\ast} \arrow[r] & {\C(Y,-)} \arrow[r, "{\C(f,-)}"] &[1.2em] {\C(X,-)}
\end{tikzcd} \] in $\Mod \C^{\op}$. Since $f$ is an epimorphism, it follows that $F^{\ast} = 0$.
\end{proof}

While Proposition \ref{proposition.6} deals with epimorphisms in $\C$, its corresponding statement for monomorphisms in $\C$ is slightly different, and we present it below. This is a well known result, see, for example, \cite[Proposition 2.3]{MR4575371}.

\begin{proposition}\label{proposition.12}
Assume that $\C$ is right coherent, and let $F \in \mod \C$. The following are equivalent:
\begin{enumerate}
    \item[(a)] If $f$ is a morphism in $\C$ such that $F \simeq \mr(f)$, then there is a decomposition $f = gh$ in $\C$, where $h$ is a split epimorphism and $g$ is a monomorphism.
    \item[(b)] There is a monomorphism $f$ in $\C$ such that $F \simeq \mr(f)$.
    \item[(c)] $\pdim F \leqslant 1$.
\end{enumerate}
\end{proposition}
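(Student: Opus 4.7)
The plan is to prove the three implications (a) $\Rightarrow$ (b) $\Rightarrow$ (c) $\Rightarrow$ (a) in a cycle, following the same functorial pattern used in the proof of Proposition \ref{proposition.1}: translate statements about morphisms in $\C$ into statements about $\mod \C$ via the Yoneda embedding, then exploit the fact that $\C$ is idempotent complete so that projectives in $\mod \C$ are precisely the representable functors.

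For (a) $\Rightarrow$ (b), I would start with \emph{any} morphism $f$ with $F \simeq \mr(f)$, which exists because $F \in \mod \C$ admits a projective presentation. Applying (a) gives a factorization $f = gh$ in $\C$ with $h$ a split epimorphism and $g$ a monomorphism. Since $\C(-,h)$ is then a split epimorphism in $\mod \C$, the images of $\C(-,f)$ and $\C(-,g)$ coincide, so $\mr(g) \simeq \mr(f) \simeq F$, and $g$ is the desired monomorphism. For (b) $\Rightarrow$ (c), a monomorphism $f$ in $\C$ yields a short exact sequence $0 \to \C(-,X) \to \C(-,Y) \to F \to 0$ in $\mod \C$, which is a projective resolution of length at most one, so $\pdim F \leqslant 1$.

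The main work is in (c) $\Rightarrow$ (a). Given an arbitrary $f \in \C(X,Y)$ with $F \simeq \mr(f)$, consider the image $K$ of $\C(-,f)$ in $\mod \C$, which coincides with the kernel of $\C(-,Y) \to F$. Because $\pdim F \leqslant 1$, the functor $K$ is projective in $\mod \C$; since $\C$ is right coherent and idempotent complete, $K$ is representable, say $K \simeq \C(-,Z)$. The canonical factorization of $\C(-,f)$ through its image then reads $\C(-,f) = \iota \bar{f}$ with $\iota \colon \C(-,Z) \hookrightarrow \C(-,Y)$ a monomorphism and $\bar{f} \colon \C(-,X) \twoheadrightarrow \C(-,Z)$ an epimorphism. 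By the Yoneda lemma, $\iota = \C(-,g)$ and $\bar{f} = \C(-,h)$ for unique morphisms $g \in \C(Z,Y)$ and $h \in \C(X,Z)$, and $f = gh$ with $g$ a monomorphism. Finally, $\bar{f}$ is an epimorphism onto a projective, hence splits in $\mod \C$; translating the splitting back through Yoneda yields $hh' = 1_Z$ in $\C$, so $h$ is a split epimorphism.

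The only subtle point, and the step I would treat most carefully, is the passage from $K$ being projective in $\mod \C$ to $K$ being representable. This is where both the right coherence of $\C$ (ensuring $K \in \mod \C$ in the first place, as a kernel in $\mod \C$) and the standing idempotent-completeness hypothesis on $\C$ (which makes every projective in $\mod \C$ representable rather than merely a summand of a representable) are used. Once this is in hand, everything else is a direct translation between $\C$ and $\mod \C$ via Yoneda.
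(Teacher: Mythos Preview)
Your proof is correct and follows essentially the same approach as the paper, which simply defers to the discussion on $0$-kernels in Subsection~\ref{subsection.2.1}: that discussion carries out precisely your (c) $\Rightarrow$ (a) argument (take the image of $\C(-,f)$, identify it with a representable via idempotent completeness, and translate the resulting epi--mono factorization back through Yoneda), while (b) $\Leftrightarrow$ (c) is declared straightforward. Your explicit identification of where right coherence (to keep the kernel in $\mod \C$) and idempotent completeness (to make projectives representable) are used is exactly the content the paper leaves implicit.
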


\begin{proof}
It is straightforward to check that (b) is equivalent to (c), and it follows from our discussion on $0$-kernels in Subsection \ref{subsection.2.1} that (a) is equivalent to (c).
\end{proof}

We denote the subcategory of $\mod \C$ consisting of the effaceable objects by $\eff \C$. Moreover, when $\C$ is right coherent, we let $\modone \C$ be the subcategory of $\mod \C$ consisting of the objects $F \in \mod \C$ with $\pdim F \leqslant 1$. With these subcategories, we can describe when every morphism in $\C$ factors as an epimorphism followed by a monomorphism.

\begin{proposition}\label{proposition.7}
Assume that $\C$ is right coherent. Then every morphism $f$ in $\C$ decomposes as $f = gh$ in $\C$ with $h$ an epimorphism and $g$ a monomorphism if and only if for every $F \in \mod \C$ there is a short exact sequence \[ \begin{tikzcd}
0 \arrow[r] & F_{\mathsf{e}} \arrow[r] & F \arrow[r] & F_{1} \arrow[r] & 0
\end{tikzcd} \] in $\mod \C$ with $F_{\mathsf{e}} \in \eff \C$ and $F_{1} \in \modone \C$.
\end{proposition}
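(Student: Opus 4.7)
The plan is to leverage Propositions \ref{proposition.6} and \ref{proposition.12} to translate the factorization of a morphism in $\C$ into a short exact sequence in $\mod \C$, and vice versa. The key observation is that every $F \in \mod \C$ is of the form $\mr(f)$ for some morphism $f$ in $\C$, via a projective presentation. So the statement really amounts to a bijective correspondence, morphism-by-morphism, between factorizations of $f$ (epi then mono) and short exact sequences $0 \to F_{\mathsf{e}} \to \mr(f) \to F_{1} \to 0$ with $F_{\mathsf{e}} \in \eff \C$ and $F_{1} \in \modone \C$.

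For the forward implication, suppose $f \in \C(X,Y)$ admits a factorization $f = gh$ with $h \in \C(X,Z)$ an epimorphism and $g \in \C(Z,Y)$ a monomorphism. Applying the Yoneda embedding yields $\C(-,f) = \C(-,g)\C(-,h)$. Since $g$ is a monomorphism in $\C$, the map $\C(-,g)$ is a monomorphism in $\mod \C$, so it embeds $\C(-,Z)$ as a subobject of $\C(-,Y)$, and $\Image(\C(-,f))$ sits inside this copy of $\C(-,Z)$. Set $F = \mr(f)$ and $F_{1} = \mr(g) = \C(-,Y)/\C(-,Z)$, and define $F_{\mathsf{e}}$ to be the kernel of the induced surjection $F \twoheadrightarrow F_{1}$. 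A direct diagram chase identifies $F_{\mathsf{e}}$ with $\C(-,Z)/\Image(\C(-,h)) = \mr(h)$. Proposition \ref{proposition.12} then gives $F_{1} \in \modone \C$, and Proposition \ref{proposition.6} gives $F_{\mathsf{e}} \in \eff \C$.

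For the converse, fix $f \in \C(X,Y)$ and write $F = \mr(f) = \C(-,Y)/K$ where $K = \Image(\C(-,f))$. Apply the hypothesis to $F$ to obtain a short exact sequence $0 \to F_{\mathsf{e}} \to F \to F_{1} \to 0$ in $\mod \C$, and let $K' \subseteq \C(-,Y)$ be the preimage of $F_{\mathsf{e}} \subseteq F$ under $\C(-,Y) \twoheadrightarrow F$. Then $K'/K \simeq F_{\mathsf{e}}$ and $\C(-,Y)/K' \simeq F_{1}$, so one has a short exact sequence $0 \to K' \to \C(-,Y) \to F_{1} \to 0$ in $\mod \C$. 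Since $\C(-,Y)$ is projective and $\pdim F_{1} \leqslant 1$, the module $K'$ is projective in $\mod \C$; by idempotent completeness of $\C$, it is representable, say $K' \simeq \C(-,Z)$. The inclusion $K' \hookrightarrow \C(-,Y)$ corresponds by the Yoneda lemma to a monomorphism $g \in \C(Z,Y)$. Since $K \subseteq K' = \C(-,Z)$, the map $\C(-,f)$ factors through $\C(-,Z)$, yielding $h \in \C(X,Z)$ with $f = gh$. Finally, $\mr(h) \simeq K'/K \simeq F_{\mathsf{e}} \in \eff \C$, so Proposition \ref{proposition.6} ensures that $h$ is an epimorphism.

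The main obstacle lies in the reverse direction, specifically recognizing that the intermediate object $Z$ of the desired factorization can be extracted from the given short exact sequence. This rests on the observation that the submodule $K' \subseteq \C(-,Y)$ lifting $F_{\mathsf{e}}$ must be projective, which follows from $\pdim F_{1} \leqslant 1$, and is then representable because $\C$ is idempotent complete. Once $Z$ is in hand, everything else is routine Yoneda bookkeeping together with a single invocation of Proposition \ref{proposition.6} to recover that $h$ is an epimorphism from the effaceability of $\mr(h)$.
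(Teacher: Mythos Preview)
Your proof is correct and follows essentially the same route as the paper's. The only cosmetic difference is that where you carry out the identification $F_{\mathsf{e}} \simeq \mr(h)$ by an explicit subobject/quotient computation (third isomorphism theorem), the paper packages this step by invoking the \emph{circular sequence} for the composition $\C(-,g)\C(-,h)$ (or alternatively the snake lemma); the underlying argument is the same.
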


\begin{proof}
Suppose that every morphism in $\C$ decomposes as an epimorphism followed by a monomorphism. Let $F \in \mod \C$ be arbitrary, and take a projective presentation \[ \begin{tikzcd}
{\C(-,X)} \arrow[r, "{\C(-,f)}"] &[1.2em] {\C(-,Y)} \arrow[r] & F \arrow[r] & 0
\end{tikzcd} \] of $F$ in $\mod \C$, where $f \in \C(X,Y)$. By assumption, there is an epimorphism $h \in \C(X,Z)$ and a monomorphism $g \in \C(Z,Y)$ in $\C$ for which $f = gh$. Consequently, we have $\C(-,f) = \C(-,g) \C(-,h)$ in $\mod \C$ with $\C(-,g)$ a monomorphism. Then, by considering the circular sequence for the composition $\C(-,g) \C(-,h)$, we obtain a short exact sequence \[ \begin{tikzcd}
0 \arrow[r] & F_{\mathsf{e}} \arrow[r] & F \arrow[r] & F_{1} \arrow[r] & 0
\end{tikzcd} \] in $\mod \C$, where $F_{\mathsf{e}} = \mr(h)$ and $F_{1} = \mr(g)$.\footnote{It is also possible to use the snake lemma or to proceed as in the proof of \cite[Corollary 2.5]{MR4575371} to deduce the existence of such a short exact sequence. However, the use of the circular sequence (which was suggested by Alex Martsinkovsky) gives a much more elegant and direct argument for its existence.} By Propositions \ref{proposition.6} and \ref{proposition.12}, we have $F_{\mathsf{e}} \in \eff \C$ and $F_{1} \in \modone \C$.

Conversely, assume that every $F \in \mod \C$ fits into a short exact sequence \[ \begin{tikzcd}
0 \arrow[r] & F_{\mathsf{e}} \arrow[r] & F \arrow[r] & F_{1} \arrow[r] & 0
\end{tikzcd} \] in $\mod \C$ with $F_{\mathsf{e}} \in \eff \C$ and $F_{1} \in \modone  \C$. Let $f \in \C(X,Y)$ be a morphism in $\C$, and denote $\mr(f) = F$, so that there is an exact sequence \[ \begin{tikzcd}
{\C(-,X)} \arrow[r, "{\C(-,f)}"] &[1.2em] {\C(-,Y)} \arrow[r] & F \arrow[r] & 0
\end{tikzcd} \] in $\mod \C$, and also a short exact sequence in $\mod \C$ as above, with $F_{\mathsf{e}} \in \eff \C$ and $F_{1} \in \modone \C$. By taking the composition of $\C(-,Y) \to F$ and $F \to F_{1}$, we get an epimorphism $\C(-,Y) \to F_{1}$ in $\mod \C$ whose kernel (object) is projective, as $\pdim F_{1} \leqslant 1$, hence it is isomorphic to $\C(-,Z)$ for some $Z \in \C$. Therefore, we conclude the existence of a commutative diagram with exact rows \[ \begin{tikzcd}
            & {\C(-,X)} \arrow[r, "{\C(-,f)}"] \arrow[d, "{\C(-,h)}"'] &[1.2em] {\C(-,Y)} \arrow[r] \arrow[d, equal] & F \arrow[r] \arrow[d] & 0 \\
0 \arrow[r] & {\C(-,Z)} \arrow[r, "{\C(-,g)}"']                        & {\C(-,Y)} \arrow[r]                  & F_{1} \arrow[r]       & 0
\end{tikzcd} \] in $\mod \C$, where $h \in \C(X,Z)$ and $g \in \C(Z,Y)$. In particular, $g$ is a monomorphism, and it follows from the Yoneda lemma that $f = gh$. Moreover, we can use either the circular sequence or the snake lemma to deduce that $F_{\mathsf{e}} \simeq \mr(h)$. Thus, we conclude from Proposition \ref{proposition.6} that $h$ is an epimorphism.
\end{proof}

One could go one step further in Proposition \ref{proposition.7} and ask what it means in terms of $\eff \C$ and $\modone \C$ to say that every morphism in $\C$ decomposes as an epimorphism followed by a monomorphism in a ``unique way''. In order to make sense of this question, we recall below the notion of a ``factorization system'', see \cite{MR1291599} and \cite{MR322004} for more details.

Let $\B$ be an arbitrary category, and let $f \in \B(X,Y)$ and $g \in \B(Z,W)$ be morphisms in $\B$. We say that $f$ and $g$ are \textit{orthogonal} if for every commutative square \[ \begin{tikzcd}
X \arrow[r, "f"] \arrow[d, "x"'] & Y \arrow[d, "y"] \\
Z \arrow[r, "g"']                & W               
\end{tikzcd} \] in $\B$, there is a unique morphism $r \in \B(Y,Z)$ such that $y = gr$ and $x = rf$. We use the notation $f \perp g$ to indicate that $f$ and $g$ are orthogonal. A \textit{factorization system} in $\B$ is a pair $(\mathcal{E}, \mathcal{M})$ of subcategories $\mathcal{E}$ and $\mathcal{M}$ of $\Mor \B$, the category of morphisms in $\B$, satisfying the following conditions:
 \begin{enumerate}
     \item[(a)] $\mathcal{E}$ and $\mathcal{M}$ are closed under isomorphisms (in the category $\Mor \B$).
     \item[(b)] Every morphism $f$ in $\B$ can be written as $f = me$, where $e \in \mathcal{E}$ and $m \in \mathcal{M}$.
     \item[(c)] $\mathcal{E} \perp \mathcal{M}$, that is, $e \perp m$ for every $e \in \mathcal{E}$ and $m \in \mathcal{M}$.
 \end{enumerate}
It is not difficult to see that if $(\mathcal{E},\mathcal{M})$ is a factorization system in $\B$, then the decomposition of a morphism $f$ in $\B$ given by item (b) is unique up to isomorphism.

Now, let $\Epi$ and $\Mono$ be the subcategories of $\Mor \C$ consisting of, respectively, the epimorphisms and monomorphisms in $\C$. Made precise, the problem that we had sketched earlier is on how to characterize when $(\Epi, \Mono)$ is a factorization system in $\C$ in terms of $\eff \C$ and $\modone \C$. We answer this question in the next proposition, but first, we need to state one more definition.

Recall from \cite{MR191935} that a \textit{torsion pair} in an abelian category $\A$ is a pair $(\mathcal{T}, \mathcal{L})$ of subcategories $\mathcal{T}$ and $\mathcal{L}$ of $\A$ satisfying the following conditions:
\begin{enumerate}
    \item[(a)] $\mathcal{T}$ and $\mathcal{L}$ are closed under isomorphisms (in the category $\A$).
    \item[(b)] Every object $X \in \A$ fits into a short exact sequence \[ \begin{tikzcd}
0 \arrow[r] & T \arrow[r] & X \arrow[r] & L \arrow[r] & 0
\end{tikzcd} \] in $\A$ with $T \in \mathcal{T}$ and $L \in \mathcal{L}$.
    \item[(c)] $\A(\mathcal{T},\mathcal{L}) = 0$, that is, $\A(T,L) = 0$ for every $T \in \mathcal{T}$ and $L \in \mathcal{L}$.
\end{enumerate}
It is worth mentioning that if $(\mathcal{T}, \mathcal{L})$ is a torsion pair in $\A$, then the short exact sequence associated to an object $X \in \A$ in item (b) is unique up to isomorphism.
 
\begin{proposition}\label{proposition.8}
Assume that $\C$ is right coherent. Then $(\Epi, \Mono)$ is a factorization system in $\C$ if and only if $(\eff \C, \modone \C)$ is a torsion pair in $\mod \C$.
\end{proposition}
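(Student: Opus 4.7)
My plan is to reduce the equivalence to a comparison of the three defining conditions of each structure. The closure-under-isomorphism conditions (condition (a) on both sides) hold automatically, since $\Epi, \Mono \subseteq \Mor \C$ and $\eff \C, \modone \C \subseteq \mod \C$ are defined by intrinsic categorical properties (being an epimorphism, being a monomorphism, being effaceable, having projective dimension at most $1$). The existence of a factorization $f = me$ with $e$ epi and $m$ mono for every morphism $f$ in $\C$ (condition (b) of the factorization system) is equivalent to the existence of a short exact sequence $0 \to F_{\mathsf{e}} \to F \to F_1 \to 0$ in $\mod \C$ with $F_{\mathsf{e}} \in \eff \C$ and $F_1 \in \modone \C$ for every $F \in \mod \C$ (condition (b) of the torsion pair) by Proposition \ref{proposition.7}. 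Hence the task reduces to showing that the orthogonality condition $\Epi \perp \Mono$ is equivalent to the Hom-vanishing condition $\Hom(\eff \C, \modone \C) = 0$.

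To compare these, I would use Propositions \ref{proposition.6} and \ref{proposition.12}: every $F_{\mathsf{e}} \in \eff \C$ has the form $\mr(e)$ for some epimorphism $e \in \C(X,Y)$, and every $F_1 \in \modone \C$ has the form $\mr(m)$ for some monomorphism $m \in \C(Z,W)$. Given such representatives, a morphism $\alpha : F_{\mathsf{e}} \to F_1$ in $\mod \C$ corresponds, via the Yoneda lemma, to a commutative square in $\C$ with $e$ on top and $m$ on the bottom. Indeed, by projectivity of $\C(-,Y)$, the composition $\C(-,Y) \to F_{\mathsf{e}} \to F_1$ lifts to some $\C(-,y)$; since $\C(-,y)\C(-,e)$ becomes zero in $F_1$, it factors through the monomorphism $\C(-,m)$, yielding $x \in \C(X,Z)$ with $mx = ye$. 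Conversely, every such square induces an $\alpha$.

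The heart of the matter is that $\alpha = 0$ if and only if the associated square admits a (necessarily unique) diagonal filler. Indeed, $\alpha = 0$ is equivalent to $\C(-,y)$ factoring through the kernel $\C(-,m)$ of $\C(-,W) \to F_1$, which by Yoneda yields $r : Y \to Z$ with $mr = y$; the identity $re = x$ then follows from $mre = ye = mx$ and $m$ being monic, and uniqueness of $r$ also follows from $m$ being monic. Combining this with the correspondence of the previous paragraph, $\Hom(\eff \C, \modone \C) = 0$ (ranging over all $F_{\mathsf{e}}$ and $F_1$) is equivalent to $\Epi \perp \Mono$ (ranging over all epi-mono squares).

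I expect the main subtlety to lie in the translation between morphisms in $\mod \C$ and commutative squares in $\C$, specifically, verifying that every $\alpha : F_{\mathsf{e}} \to F_1$ genuinely arises from some square, so that the Hom-vanishing really tests orthogonality of arbitrary commutative squares rather than a restricted family. Once this Yoneda bookkeeping is set up, the rest follows directly, as the circular sequence was used in Proposition \ref{proposition.7} to handle condition (b).
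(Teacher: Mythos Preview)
Your proposal is correct and follows essentially the same approach as the paper's proof: both reduce to Proposition~\ref{proposition.7} for condition (b), and both establish the equivalence of $\Epi \perp \Mono$ with $\Hom(\eff \C, \modone \C) = 0$ by translating between morphisms $\alpha : \mr(e) \to \mr(m)$ in $\mod \C$ and commutative squares $ye = mx$ in $\C$ via the Yoneda lemma, then observing that $\alpha = 0$ iff the square admits a diagonal filler. The only organizational difference is that the paper proves the two implications separately rather than packaging the orthogonality comparison as a single equivalence, but the underlying arguments are the same.
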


\begin{proof}
Suppose that $(\Epi, \Mono)$ is a factorization system in $\C$. It is clear that $\eff \C$ and $\modone \C$ are closed under isomorphisms. Thus, due to Proposition \ref{proposition.7}, it suffices to verify that $\Hom(\eff \C, \modone \C) = 0$ to conclude that $(\eff \C, \modone \C)$ is a torsion pair in $\mod \C$. Well, let $F \in \eff \C$ and $G \in \modone \C$, and take $\alpha \in \Hom(F,G)$. By Propositions \ref{proposition.6} and \ref{proposition.12}, there is an epimorphism $f \in \C(X,Y)$ in $\C$ such that $F \simeq \mr(f)$, and there is a monomorphism $g \in \C(Z,W)$ in $\C$ for which $G \simeq \mr(g)$. Consequently, there is a commutative diagram with exact rows \[ \begin{tikzcd}
            & {\C(-,X)} \arrow[r, "{\C(-,f)}"] \arrow[d, "{\C(-,x)}"'] &[1.2em] {\C(-,Y)} \arrow[r] \arrow[d, "{\C(-,y)}"] & F \arrow[r] \arrow[d, "\alpha"] & 0 \\
0 \arrow[r] & {\C(-,Z)} \arrow[r, "{\C(-,g)}"']                        & {\C(-,W)} \arrow[r]                        & G \arrow[r]                     & 0
\end{tikzcd} \] in $\mod \C$ for some $x \in \C(X,Z)$ and $y \in \C(Y,W)$. Then, by the Yoneda lemma, we have $yf = gx$, and because $f \perp g$, there is a (unique) morphism $r \in \C(Y,Z)$ for which $y = gr$ and $x = rf$. Therefore, $\C(-,y) = \C(-,g) \C(-,r)$, and we can conclude from the above diagram that $\alpha = 0$.

For the converse, assume that $(\eff \C, \modone \C)$ is a torsion pair in $\mod \C$. Clearly, $\Epi$ and $\Mono$ are closed under isomorphisms in $\Mor \C$, and it follows from Proposition \ref{proposition.7} that every morphism in $\C$ factors as an epimorphism followed by a monomorphism. Therefore, in order to conclude that $(\Epi , \Mono)$ is a factorization system in $\C$, it only remains to show that $\Epi \hspace{-0.1em} \perp \hspace{-0.1em} \Mono$. So, let $f \in \C(X,Y)$ and $g \in \C(Z,W)$ be, respectively, an epimorphism and a monomorphism in $\C$, and suppose that there is a commutative diagram \[ \begin{tikzcd}
X \arrow[r, "f"] \arrow[d, "x"'] & Y \arrow[d, "y"] \\
Z \arrow[r, "g"']                & W               
\end{tikzcd} \] in $\C$. By applying the Yoneda embedding to this diagram and by taking the cokernels of $\C(-,f)$ and $\C(-,g)$, we obtain a commutative diagram with exact rows in $\mod \C$ as the one we had above, with $F \simeq \mr(f)$ and $G \simeq \mr(g)$. However, we know from Propositions \ref{proposition.6} and \ref{proposition.12} that $F \in \eff \C$ and $G \in \modone \C$. Thus, $\Hom(F,G) = 0$, which implies (following the diagram in $\mod \C$) that $\C(-,y)$ factors uniquely through $\C(-,g)$. By the Yoneda lemma, this means that there is a unique morphism $r \in \C(Y,Z)$ such that $\C(-,y) = \C(-,g) \C(-,r)$, which is equivalent to $y = gr$. Furthermore, by using the fact that $g$ is a monomorphism, we can deduce that $x = rf$, hence $f \perp g$.
\end{proof}

As the reader might have noticed, Propositions \ref{proposition.7} and \ref{proposition.8} seem to admit more general statements, and the way we have defined a factorization system was intentionally chosen to highlight its similarities with the notion of a torsion pair. Nevertheless, we do not attempt to give a general theory on the relationship between factorization systems in $\C$ and torsion pairs in $\mod \C$, as it would fall out of the scope of this paper. Still, the reader should keep this observation in mind as we proceed.

It is a standard fact that if $\C$ is abelian, then $(\Epi, \Mono)$ is a factorization system in $\C$. Therefore, we deduce from Proposition \ref{proposition.8} that if $\C$ is abelian, then $(\eff \C, \modone \C)$ is a torsion pair in $\mod \C$. However, it is beneficial to prove this result using functorial methods, as in \cite[Proposition 3.6]{MR4392222}, \cite[Proposition 7.5]{MR4575371} and \cite[Proposition 3]{MR2471947}. Below, we give yet another such proof.

\begin{proposition}\label{proposition.9}
If $\C$ is an abelian category, then $(\eff \C, \modone \C)$ is a torsion pair in $\mod \C$.
\end{proposition}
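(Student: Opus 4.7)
The plan is to verify the three defining axioms of a torsion pair for $(\eff \C, \modone \C)$ in $\mod \C$. Since $\C$ is abelian, it has kernels and hence is right coherent, and every morphism in $\C$ admits a factorization as an epimorphism followed by a monomorphism. Closure of $\eff \C$ and $\modone \C$ under isomorphism in $\mod \C$ is immediate from their definitions. Moreover, Proposition \ref{proposition.7} directly supplies, for each $F \in \mod \C$, a short exact sequence $0 \to F_{\mathsf{e}} \to F \to F_{1} \to 0$ in $\mod \C$ with $F_{\mathsf{e}} \in \eff \C$ and $F_{1} \in \modone \C$, giving condition (b).

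The remaining axiom, $\Hom(\eff \C, \modone \C) = 0$, is the main obstacle, and I would prove it by mimicking the Hom-vanishing part of the proof of Proposition \ref{proposition.8}. Given $F \in \eff \C$, $G \in \modone \C$ and $\alpha \in \Hom(F,G)$, I would use Proposition \ref{proposition.6} to write $F \simeq \mr(f)$ for some epimorphism $f \in \C(X,Y)$ and Proposition \ref{proposition.12} to write $G \simeq \mr(g)$ for some monomorphism $g \in \C(Z,W)$. Lifting $\alpha$ along the corresponding projective presentations via projectivity of the representables, I obtain $x \in \C(X,Z)$ and $y \in \C(Y,W)$ with $\C(-, g)\C(-, x) = \C(-, y)\C(-, f)$, which by the Yoneda lemma is the commuting square $gx = yf$ in $\C$.

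At this point I would invoke the standard fact that, in an abelian category, any commuting square $gx = yf$ with $f$ an epimorphism and $g$ a monomorphism admits a unique diagonal filler $r \in \C(Y,Z)$ satisfying $rf = x$ and $gr = y$. Applying the Yoneda embedding gives $\C(-, y) = \C(-, g)\C(-, r)$, and chasing this factorization through the lifted morphism of projective presentations forces $\alpha = 0$. The crux of the argument is thus the diagonal-filler step, the only place where the full abelian structure of $\C$ is used beyond the epi-mono factorization; everything else is a diagram chase powered by the Yoneda lemma and by Propositions \ref{proposition.6}, \ref{proposition.7}, and \ref{proposition.12}.
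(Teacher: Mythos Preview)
Your argument is correct. The diagonal-filler step is indeed the standard orthogonality $\Epi \perp \Mono$ in an abelian category, and once you have $\C(-,y) = \C(-,g)\C(-,r)$ the vanishing of $\alpha$ follows exactly as in the proof of Proposition~\ref{proposition.8}. In fact, the paper itself notes just before Proposition~\ref{proposition.9} that this route---via the fact that $(\Epi,\Mono)$ is a factorization system and Proposition~\ref{proposition.8}---already yields the result.

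The paper, however, deliberately gives a different proof of the Hom-vanishing, one that avoids invoking $\Epi \perp \Mono$. Instead it uses that every $G \in \modone \C$ is a \emph{syzygy} in $\mod \C$ (this is axiom (1F2) of Theorem~\ref{theorem.1}), so there is a monomorphism $\delta : G \to \C(-,V)$ for some $V \in \C$. Composing with the presentation map $\gamma : \C(-,W) \to G$ gives, by Yoneda, a morphism $h \in \C(W,V)$ with $\delta\gamma = \C(-,h)$; then $\alpha = 0$ reduces to $hy = 0$, which follows from $hyf = 0$ and $f$ being an epimorphism. The point of this detour is that it generalizes: the same syzygy argument is what the paper uses after Proposition~\ref{proposition.13} to show $\Hom(\eff \C, \mod_{n}\hspace{-0.1em}\C) = 0$ for $n$-abelian categories with $n \geqslant 2$, where no analogue of the diagonal-filler property is available. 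Your approach is cleaner for the abelian case in isolation; the paper's approach is chosen because it is the one that scales.
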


\begin{proof}
To begin with, note that, by Theorem \ref{theorem.1}, $\C$ is right coherent and every object in $\modone \C$ is a syzygy in $\mod \C$. Moreover, it is clear that every morphism in an abelian category can be written as an epimorphism followed by a monomorphism. Thus, in view of Proposition \ref{proposition.7}, it only remains to show that $\Hom(\eff \C, \modone \C) = 0$ to conclude that $(\eff \C, \modone \C)$ is a torsion pair in $\mod \C$.

Let $F \in \eff \C$ and $G \in \modone \C$, and consider a morphism $\alpha \in \Hom(F,G)$. Then, following the proof of Proposition \ref{proposition.8}, there is a commutative diagram with exact rows \[ \begin{tikzcd}
            & {\C(-,X)} \arrow[r, "{\C(-,f)}"] \arrow[d, "{\C(-,x)}"'] &[1.2em] {\C(-,Y)} \arrow[r, "\beta"] \arrow[d, "{\C(-,y)}"] & F \arrow[r] \arrow[d, "\alpha"] & 0 \\
0 \arrow[r] & {\C(-,Z)} \arrow[r, "{\C(-,g)}"']                        & {\C(-,W)} \arrow[r, "\gamma"']                        & G \arrow[r]                     & 0
\end{tikzcd} \] in $\mod \C$, where $f \in \C(X,Y)$, $g \in \C(Z,W)$, $x \in \C(X,Z)$ and $y \in \C(Y,W)$, with $f$ an epimorphism and $g$ a monomorphism. Also, because $G \in \modone \C$, it is a syzygy, so that there is a monomorphism $\delta \in \Hom (G,\C(-,V))$ in $\mod \C$ for some $V \in \C$. Now, note that $\alpha = 0$ if and only if $\delta \gamma \C(-,y) = 0$. Moreover, it follows from the Yoneda lemma that $\delta \gamma = \C(-,h)$ for some $h \in \C(W,V)$, hence we obtain that $\alpha = 0$ if and only if $hy = 0$. But, in fact, we have $\C(-,h)\C(-,y)\C(-,f) = 0$, which implies that $hyf = 0$, and then $hy = 0$, as $f$ is an epimorphism. Therefore, $\alpha = 0$.
\end{proof}

The results obtained in Propositions \ref{proposition.7}, \ref{proposition.8} and \ref{proposition.9} provide a useful perspective on the factorization of a morphism in an abelian category as an epimorphism followed by a monomorphism. Indeed, we can now use this perspective to understand how this factorization compares to the factorizations of a morphism in a $0$-abelian category as a split epimorphism followed by a monomorphism and as an epimorphism followed by a split monomorphism. To realize this comparison, we present below the analogues of Propositions \ref{proposition.7}, \ref{proposition.8} and \ref{proposition.9} for the case $n = 0$.

In what follows, let $\SplitEpi$ and $\SplitMono$ be the subcategories of $\Mor \C$ consisting of, respectively, the split epimorphisms and split monomorphisms in $\C$.

\begin{proposition}\label{proposition.13}
Assume that $\C$ is right coherent. The following are equivalent:
\begin{enumerate}
    \item[(a)] Every morphism $f$ in $\C$ decomposes as $f = rs$ in $\C$ with $s$ an epimorphism and $r$ a split monomorphism.\footnote{That is, $\C$ has $0$-cokernels.}
    \item[(b)] For every $F \in \mod \C$ there is a short exact sequence \[ \begin{tikzcd}
0 \arrow[r] & F_{\mathsf{e}} \arrow[r] & F \arrow[r] & \C(-,Z) \arrow[r] & 0
\end{tikzcd} \] in $\mod \C$ with $F_{\mathsf{e}} \in \eff \C$ and $Z \in \C$.
    \item[(c)] $(\Epi, \SplitMono)$ is a factorization system in $\C$.
    \item[(d)] $(\eff \C, \proj \C)$ is a torsion pair in $\mod \C$.
    \item[(e)] The category $\C$ is $0$-abelian.
    \item[(f)] Every $F \in \mod \C$ is of the form $F \simeq \mr(f) \oplus \C(-,Z)$ for some bimorphism $f$ in $\C$ and some $Z \in \C$.
\end{enumerate}
\end{proposition}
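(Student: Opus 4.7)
The plan is to establish the six equivalences by mirroring the structure of Propositions \ref{proposition.7}, \ref{proposition.8}, and \ref{proposition.9}, but with \emph{split monomorphisms} in place of monomorphisms and $\proj \C$ in place of $\modone \C$. The key new ingredient, relative to those propositions, is that if $r$ is a split monomorphism in $\C$, then $\C(-,r)$ is a split monomorphism between representables, so by idempotent completeness $\mr(r)$ is itself representable. I would organize the argument into four blocks: (a) $\Leftrightarrow$ (e); (a) $\Leftrightarrow$ (b) $\Leftrightarrow$ (d); (a) $\Leftrightarrow$ (c); and finally (e) $\Leftrightarrow$ (f).

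For (a) $\Leftrightarrow$ (e), I would invoke Proposition \ref{proposition.1}: condition (a) is exactly the axiom ($0$A1$^{\op}$), which is equivalent to ($0$F1$^{\op}$). Combined with the standing hypothesis that $\C$ is right coherent and the remark that the global dimensions of $\mod \C$ and $\mod \C^{\op}$ coincide for coherent $\C$, this forces ($0$F1) as well, so $\C$ is $0$-abelian; the converse is immediate.

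For (a) $\Leftrightarrow$ (b), I would adapt the proof of Proposition \ref{proposition.7} almost verbatim: starting from a projective presentation $\C(-,X) \to \C(-,Y) \to F \to 0$ of an arbitrary $F \in \mod \C$ and the factorization $f = rs$ provided by (a), the circular sequence applied to $\C(-,f) = \C(-,r)\C(-,s)$ yields a short exact sequence $0 \to \mr(s) \to F \to \mr(r) \to 0$ in which $\mr(s) \in \eff \C$ by Proposition \ref{proposition.6} and $\mr(r)$ is representable by the observation above; the converse direction adapts Proposition \ref{proposition.7}'s argument using that the kernel of $\C(-,Y) \to \C(-,Z)$ is representable, combined with the Yoneda lemma. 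Then (b) $\Leftrightarrow$ (d) is immediate: (b) gives the torsion-pair short exact sequence, and orthogonality $\Hom(\eff \C, \proj \C) = 0$ is automatic since $\Hom(F, \C(-,Z)) = F^{\ast}(Z) = 0$ for every $F \in \eff \C$. Analogously, (a) $\Leftrightarrow$ (c) parallels Proposition \ref{proposition.8}: (c) $\Rightarrow$ (a) is trivial, and for (a) $\Rightarrow$ (c) the same diagram chase produces the unique lift $r$ once one notices that the induced map $\mr(f) \to \mr(g)$ vanishes, which holds because $\mr(f) \in \eff \C$ and $\mr(g) \in \proj \C$.

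The main obstacle is (e) $\Leftrightarrow$ (f). The direction (f) $\Rightarrow$ (b) is immediate from the direct sum decomposition. For (e) $\Rightarrow$ (f), one takes the short exact sequence from (b), which splits because $\C(-,Z)$ is projective, yielding $F \simeq F_{\mathsf{e}} \oplus \C(-,Z)$; the subtle point is upgrading the representation $F_{\mathsf{e}} \simeq \mr(f)$, available for any epi $f$ by Proposition \ref{proposition.6}, to one by a \emph{bimorphism}. This is where the full strength of $0$-abelianness enters: since (e) also provides $0$-kernels (axiom ($0$A1), equivalent to ($0$F1) by Proposition \ref{proposition.1}), such an $f$ factors as $f = gh$ with $h$ a split epi and $g$ a monomorphism, and then $\mr(g) = \mr(f) \in \eff \C$ forces $g$ to also be an epi by Proposition \ref{proposition.6}, so $g$ is the desired bimorphism representing $F_{\mathsf{e}}$.
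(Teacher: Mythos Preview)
Your proposal is correct and follows essentially the same route as the paper's proof, which likewise reduces (a) $\Leftrightarrow$ (b) to Proposition \ref{proposition.7}, (a) $\Leftrightarrow$ (e) to Proposition \ref{proposition.1} together with \cite[Theorem A.5]{2409.10438}, and handles (f) via Propositions \ref{proposition.6} and \ref{proposition.12}. The only stylistic difference is that the paper treats (a) $\Leftrightarrow$ (c) and (b) $\Leftrightarrow$ (d) as ``clear'' (the orthogonality $\Epi \perp \SplitMono$ admits a one-line direct check using a retraction of $g$), whereas you route through the torsion-pair argument of Proposition \ref{proposition.8}; both work.
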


\begin{proof}
It is clear that (a) is equivalent to (c), and that (b) is equivalent to (d). Also, the proof of Proposition \ref{proposition.7} carries over to show that (a) is equivalent to (b). Furthermore, it follows from Proposition \ref{proposition.1} and \cite[Theorem A.5]{2409.10438} that (a) is equivalent to (e). Finally, it is easy to use Propositions \ref{proposition.6} and \ref{proposition.12} to deduce that conditions (b) and (e) together imply (f), and by Proposition \ref{proposition.6}, (f) implies (b).
\end{proof}

We leave it to the reader to state the result that is dual to Proposition \ref{proposition.13}, which is obtained by replacing $\C$ by $\C^{\op}$. We just remark that, in particular, it says that if $\C$ is left coherent, then $(\SplitEpi, \Mono)$ is a factorization system in $\C$ if and only if $(\eff \C^{\op}, \proj \C^{\op})$ is a torsion pair in $\mod \C^{\op}$ if and only if $\C$ is a $0$-abelian category.

At this moment, it is convenient to note that, by Proposition \ref{proposition.9}, if $\C$ is abelian, then $(\eff \C^{\op}, \modone \C^{\op})$ is a torsion pair in $\mod \C^{\op}$. Thus, one might ask what factorization system in $\C$ is obtained from this torsion pair in $\mod \C^{\op}$. Well, by taking $\C^{\op}$ in place of $\C$ in Proposition \ref{proposition.8}, we conclude that if $\C$ is left coherent, then $(\eff \C^{\op}, \modone \C^{\op})$ is a torsion pair in $\mod \C^{\op}$ if and only if $(\Epi, \Mono)$ is a factorization system in $\C$. So, when $\C$ is abelian, both torsion pairs $(\eff \C, \modone \C)$ in $\mod \C$ and $(\eff \C^{\op}, \modone \C^{\op})$ in $\mod \C^{\op}$ lead to the same factorization system in $\C$, namely, $(\Epi, \Mono)$.\footnote{Roughly speaking, this seems to happen due to the ordered pairs of integers $(1,n)$ and $(n,1)$ being the same when $n = 1$. For $n = 0$, we get the distinct pairs $(1,0)$ and $(0,1)$, each of which corresponds to a different factorization system in a $0$-abelian category.}

We can now conclude that the factorizations that we have been discussing in $0$-abelian categories and in abelian categories are analogous in the sense that $(\eff \C, \proj \C)$ and $(\eff \C^{\op}, \proj \C^{\op})$ are analogous to $(\eff \C, \modone \C)$ and $(\eff \C^{\op}, \modone \C^{\op})$, respectively. We could, perhaps, try to formalize these analogies, and attempt to develop a general theory for them, as we have hinted before, but the author believes that this would be more appropriate for a separate paper. Instead, let us point out how the ideas developed in this section could help to understand the existence of factorization systems in $n$-abelian categories, for $n \geqslant 2$, that are ``higher versions'' of $(\Epi, \SplitMono)$ and $(\SplitEpi, \Mono)$ when $n = 0$, and of $(\Epi, \Mono)$ when $n = 1$.

To begin with, let us make the trivial observation that, when $\C$ is right coherent, $\proj \C$ consists of the objects $F \in \mod \C$ with $\pdim F \leqslant 0$. With this in mind, it is natural to consider the subcategory $\mod_{n} \hspace{-0.1em} \C$ of $\mod \C$ consisting of the objects $F \in \mod \C$ with $\pdim F \leqslant n$, and to conjecture that $(\eff \C, \mod_{n} \hspace{-0.1em} \C)$ is a torsion pair in $\mod \C$ when $\C$ is $n$-abelian, since this is the case when $n = 0$ and $n = 1$, by Propositions \ref{proposition.13} and \ref{proposition.9}. Fortunately, this is also true for $n \geqslant 2$. Indeed, if $\C$ is $n$-abelian and $n \geqslant 2$, then it follows from Theorem \ref{theorem.1} that every $F \in \mod_{n} \hspace{-0.1em} \C$ is a syzygy in $\mod \C$, and we can proceed similarly to the proof of Proposition \ref{proposition.9} to show that $\Hom(\eff \C, \mod_{n} \hspace{-0.1em} \C) = 0$. Moreover, to deduce that every $F \in \mod \C$ fits into a short exact sequence \[ \begin{tikzcd}
0 \arrow[r] & F_{\mathsf{e}} \arrow[r] & F \arrow[r] & F_{n} \arrow[r] & 0
\end{tikzcd} \] in $\mod \C$ with $F_{\mathsf{e}} \in \eff \C$ and $F_{n} \in \mod_{n} \hspace{-0.1em} \C$, we can use the fact that $\C$ is equivalent to an \textit{$n$-cluster tilting subcategory} of an abelian category, see \cite[Corollary 1.3]{MR4301013} and \cite[Theorem A]{MR4514466}, and proceed as in the proof of \cite[Proposition 3.14]{EbrahimiNasr-Isfahani}. However, this argument for the existence of the above short exact sequence does not seem to be satisfactory, since it depends on regarding $\C$ as an $n$-cluster tilting subcategory of an ambient abelian category. Also, it does not give a way to construct projective presentations of $F_{\mathsf{e}}$ and $F_{n}$ based on a given projective presentation of $F$, in the same spirit of the proof of Proposition \ref{proposition.7}. Such a construction would be desirable and important to understand if the torsion pair $(\eff \C, \mod_{n} \hspace{-0.1em} \C)$ in $\mod \C$ gives rise to a factorization system in $\C$, so that the analogies that we have mentioned earlier would also hold for $n \geqslant 2$. In view of these remarks, we propose the following questions:
\begin{enumerate}
    \item[(Q1)] Is there a constructive proof for the fact that if $\C$ is $n$-abelian, then for every $F \in \mod \C$ there is a short exact sequence \[ \begin{tikzcd}
0 \arrow[r] & F_{\mathsf{e}} \arrow[r] & F \arrow[r] & F_{n} \arrow[r] & 0
\end{tikzcd} \] in $\mod \C$ with $F_{\mathsf{e}} \in \eff \C$ and $F_{n} \in \mod_{n} \hspace{-0.1em} \C$, which does not use the result that $\C$ is equivalent to an $n$-cluster tilting subcategory of an abelian category?
    \item[(Q2)] Is there a factorization system of the form $(\Epi, \mathcal{M})$ in an $n$-abelian category $\C$ that corresponds to the torsion pair $(\eff \C, \mod_{n} \hspace{-0.1em} \C)$ in $\mod \C$ as in the cases $n = 0$ and $n = 1$? If the answer is affirmative, what is $\mathcal{M}$?
\end{enumerate}
Note that, by replacing $\C$ by $\C^{\op}$ above, question (Q2) becomes whether there is a factorization system of the form $(\mathcal{E}, \Mono)$ in an $n$-abelian category $\C$ that corresponds to the torsion pair $(\eff \C^{\op}, \mod_{n} \hspace{-0.1em} \C^{\op})$ in $\mod \C^{\op}$.

The author hopes to have convinced the reader that a factorization system as the one proposed in question (Q2) should play the role of the factorization systems in $0$-abelian categories and abelian categories that we have discussed in this section. Finally, let us mention that one such candidate has been indicated in \cite[Proposition 3.13]{MR4860676} for the case when $\C$ is an $n$-cluster tilting subcategory of an ``abelian length category''. It would be interesting to examine how it relates to our desired factorization system, and whether it provides answers to questions (Q1) and (Q2).

\section{\texorpdfstring{$0$}{0}-Abelian categories with enough injectives or projectives}\label{section.4}

In this section, we investigate $0$-abelian categories with enough injectives or projectives through their categories of finitely presented modules. We begin by extending results on $n$-abelian categories from \cite[Section 7]{2409.10438} to the case $n = 0$, which turn out to be rather straightforward. Then we move on to define a bifunctor for a $0$-abelian category with enough injectives or projectives, which is analogous to the extension bifunctor for an abelian category. Once this bifunctor is defined, we give a few results that support this analogy. For instance, we present $0$-abelian versions of the long exact sequence for the extension bifunctors, of a conjecture due to Auslander on the direct summands of the extension functors, and of the Hilton-Rees theorem. We then end the section with applications to stable categories.

\subsection{Extending some previous results}

Recall from \cite[Section 7]{2409.10438} that the category $\C$ is called \textit{right comprehensive} when $\C(-,I)$ is an injective object in $\mod \C$ for every injective object $I \in \C$. Dually, $\C$ is called \textit{left comprehensive} when $\C(P,-)$ is injective in $\mod \C^{\op}$ for every projective object $P \in \C$. When $\C$ is both right and left comprehensive, we say that $\C$ is \textit{comprehensive}. It was essentially proved by Jasso in \cite[Theorem 3.12]{MR3519980} that every $n$-abelian category is comprehensive, whenever $n$ is a positive integer, see \cite[Proposition 7.3]{2409.10438}. We can now extend this result to the case $n = 0$:

\begin{proposition}\label{proposition.3}
Every $0$-abelian category is comprehensive.
\end{proposition}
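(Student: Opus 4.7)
The plan is to prove right comprehensiveness directly, and then obtain left comprehensiveness by duality, exploiting the symmetric form of the axioms \textup{($0$F1)} and \textup{($0$F1$^{\op}$)}. So let $\C$ be a $0$-abelian category and let $I \in \C$ be injective; the goal is to show that $\C(-,I)$ is injective in $\mod \C$, for which it suffices to check that $\Ext^{1}_{\mod \C}(G, \C(-,I)) = 0$ for every $G \in \mod \C$, since $\gldim(\mod \C) \leqslant 1$ already kills all higher $\Ext$.

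The main ingredient is that, in a $0$-abelian category, finitely presented modules admit very short representable resolutions. Given $G \in \mod \C$, pick a projective presentation $\C(-,X) \xrightarrow{\C(-,f)} \C(-,Y) \to G \to 0$ with $f \in \C(X,Y)$. By Theorem \ref{theorem.3}, $f$ has a $0$-kernel, which gives a decomposition $f = gh$ with $h$ a split epimorphism and $g \in \C(Z,Y)$ a monomorphism. As already observed in the discussion before Proposition \ref{proposition.1}, this yields a projective resolution
\[
\begin{tikzcd}
0 \arrow[r] & {\C(-,Z)} \arrow[r, "{\C(-,g)}"] &[1em] {\C(-,Y)} \arrow[r] & G \arrow[r] & 0
\end{tikzcd}
\]
in $\mod \C$, where $g$ is a monomorphism in $\C$.

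Now apply $\Hom(-,\C(-,I))$ to this resolution and use the Yoneda lemma to identify $\Hom(\C(-,Y),\C(-,I)) \cong \C(Y,I)$ and $\Hom(\C(-,Z),\C(-,I)) \cong \C(Z,I)$. The resulting long exact sequence shows that $\Ext^{1}_{\mod \C}(G, \C(-,I))$ is the cokernel of the map $\C(g,I)\colon \C(Y,I) \to \C(Z,I)$. But $g$ is a monomorphism in $\C$ and $I$ is injective in $\C$, so $\C(g,I)$ is surjective and the cokernel vanishes. Hence $\C(-,I)$ is injective in $\mod \C$, so $\C$ is right comprehensive.

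By axioms \textup{($0$F1)} and \textup{($0$F1$^{\op}$)}, the category $\C^{\op}$ is itself $0$-abelian, and an object is projective in $\C$ if and only if it is injective in $\C^{\op}$. Applying the right comprehensiveness just proved to $\C^{\op}$ therefore yields that $\C(P,-) \in \mod \C^{\op}$ is injective for every projective $P \in \C$, that is, $\C$ is left comprehensive. No step appears to present a serious obstacle; the only point requiring care is to ensure that the monomorphism $\C(-,g)$ appearing in the resolution of $G$ comes from a monomorphism $g$ in $\C$, but this is automatic since $\C(-,-)$ reflects monomorphisms.
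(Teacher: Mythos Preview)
Your proof is correct and follows essentially the same approach as the paper: reduce to showing $\Ext^{1}(-,\C(-,I)) = 0$ via a length-one representable resolution whose first map comes from a monomorphism in $\C$, then use injectivity of $I$ and Yoneda. The only cosmetic difference is that the paper takes the length-one projective resolution directly from $\gldim(\mod \C) \leqslant 1$ (so the map $\C(-,f)$ is already a monomorphism, and $f$ is then a monomorphism because the Yoneda embedding reflects them), whereas you pass through a $0$-kernel to manufacture the monomorphism $g$; this extra step is harmless but unnecessary.
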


\begin{proof}
Let $\C$ be a $0$-abelian category. Below, we show that $\C$ is right comprehensive. Then, by taking $\C^{\op}$ in place of $\C$, we can deduce that $\C$ is also left comprehensive.

Let $I \in \C$ be an injective object. In order to prove that $\C(-,I)$ is injective in $\mod \C$, we will show that $\Ext^{1}(-,\C(-,I)) = 0$. For this, let $F \in \mod \C$ be arbitrary, and take a projective resolution \[ \begin{tikzcd}
0 \arrow[r] & {\C(-,X)} \arrow[r, "{\C(-,f)}"] &[1.2em] {\C(-,Y)} \arrow[r] & F \arrow[r] & 0
\end{tikzcd} \] of $F$ in $\mod \C$, where $f \in \C(X,Y)$. By applying  $\Hom(-,\C(-,I))$ to this resolution, we obtain a complex in $\Ab$ which is, by the Yoneda lemma, isomorphic to \[ \begin{tikzcd}
0 \arrow[r] & {\Hom(F,\C(-,I))} \arrow[r] & {\C(Y,I)} \arrow[r, "{\C(f,I)}"] &[1.2em] {\C(X,I)} \arrow[r] & 0
\end{tikzcd}. \] Consequently, $\Ext^{1}(F,\C(-,I)) \simeq \C(X,I) / \Image \C(f,I)$. However, note that $f$ is a monomorphism, and because $I$ is injective, it follows that $\C(f,I)$ is an epimorphism. Therefore, $\Ext^{1}(F,\C(-,I)) = 0$, and we get that $\Ext^{1}(-,\C(-,I)) = 0$.
\end{proof}

Given that $0$-abelian categories are coherent and comprehensive, we obtain the result below, which is the case $n = 0$ of \cite[Proposition 7.6]{2409.10438}.

\begin{proposition}\label{proposition.4}
Assume that $\C$ is a $0$-abelian category. Then the following hold:
\begin{enumerate}
    \item[(a)] $\C$ has enough injectives if and only if $\domdim (\mod \C) \geqslant 1$.
    \item[(b)] $\C$ has enough projectives if and only if $\domdim (\mod \C^{\op}) \geqslant 1$.
\end{enumerate}
\end{proposition}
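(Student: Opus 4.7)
The plan is to verify that the argument for the positive-$n$ case given in \cite[Proposition 7.6]{2409.10438} carries over verbatim to $n = 0$. Since Proposition \ref{proposition.3} provides both right and left comprehensiveness, part (b) will follow from (a) by passing to $\C^{\op}$, so I would focus on proving (a). The whole argument is essentially Yoneda-lemma bookkeeping together with Proposition \ref{proposition.3} and the idempotent completeness of $\C$.

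The proof hinges on two translations between $\C$ and $\mod \C$ via the Yoneda embedding. First, because $\C$ is idempotent complete, every projective object of $\mod \C$ is representable, so $\domdim (\mod \C) \geqslant 1$ amounts to the statement that every $\C(-,Y)$ admits a monomorphism into some projective-injective $Q \in \mod \C$. Second, such a $Q$ is in particular projective, hence representable too, say $Q \simeq \C(-,Z)$; and any such monomorphism is then of the form $\C(-,f)$ for a unique $f \in \C(Y,Z)$ by the Yoneda lemma.

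For the forward direction, given $Y \in \C$, I would pick a monomorphism $Y \to I$ in $\C$ with $I$ injective and apply the Yoneda embedding (which sends monomorphisms in $\C$ to monomorphisms in $\mod \C$) to realise $\C(-,Y)$ as a subobject of $\C(-,I)$, which is projective-injective by Proposition \ref{proposition.3}. For the converse, starting from a monomorphism $\C(-,Y) \to \C(-,Z)$ with $\C(-,Z)$ projective-injective, I would first note that the underlying morphism $f \colon Y \to Z$ is a monomorphism (by evaluating the natural transformation at $Y$ and tracking $1_{Y}$), and then verify that $Z$ is injective in $\C$: any monomorphism $g \in \C(X,X')$ and morphism $h \in \C(X,Z)$ induce, via Yoneda, a monomorphism $\C(-,g)$ in $\mod \C$ together with the morphism $\C(-,h)$, and the latter extends along $\C(-,g)$ by injectivity of $\C(-,Z)$; the extension then descends to a morphism $X' \to Z$ in $\C$ by Yoneda again. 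This supplies enough injectives in $\C$.

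The main obstacle I anticipate is not conceptual but notational: one must check that the definition of dominant dimension employed in \cite[Section 7]{2409.10438} specialises, at $n = 0$, to the ``every projective embeds into a projective-injective'' formulation exploited above, and that no implicit hypothesis on $\mod \C$ having enough injectives is required in either direction (since in the forward direction one only produces an embedding into a projective-injective without needing an injective envelope, and in the reverse direction one only uses an injectivity-extension property of the specific module $\C(-,Z)$). Once this bookkeeping is settled, the proof is literally the $n = 0$ instance of the proof of \cite[Proposition 7.6]{2409.10438}.
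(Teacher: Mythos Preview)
Your proposal is correct and follows essentially the same approach as the paper: the paper's proof is the one-liner ``Follows from Proposition \ref{proposition.3} and \cite[Corollary 7.5]{2409.10438},'' and what you have written is precisely an unpacking of that corollary (the Yoneda bookkeeping you describe) together with the comprehensiveness supplied by Proposition \ref{proposition.3}. The only cosmetic discrepancy is that the paper cites \cite[Corollary 7.5]{2409.10438} rather than \cite[Proposition 7.6]{2409.10438}, the latter being the $n\geqslant 1$ specialisation whose proof you are reproducing.
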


\begin{proof}
Follows from Proposition \ref{proposition.3} and \cite[Corollary 7.5]{2409.10438}.
\end{proof}

We can now state \cite[Theorem 7.14]{2409.10438} for $n = 0$, which is almost trivial due to Proposition \ref{proposition.4} and the way we have defined a $0$-abelian category.

\begin{theorem}\label{theorem.2}
Let $\C$ be an additive and idempotent complete category. Then $\C$ is $0$-abelian and has enough injectives if and only if $\C$ is coherent and $\gldim (\mod \C) \leqslant 1 \leqslant \domdim (\mod \C)$. Moreover, in this case, $\mod \C$ has enough injectives.
\end{theorem}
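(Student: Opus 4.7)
The plan for the iff statement is to simply concatenate two facts already in place. Recall from the discussion in Subsection \ref{subsection.2.1} that $\C$ is $0$-abelian precisely when $\C$ is coherent and $\gldim(\mod \C) \leqslant 1$; and Proposition \ref{proposition.4}(a) tells us that a $0$-abelian $\C$ has enough injectives if and only if $\domdim(\mod \C) \geqslant 1$. Putting these two observations together yields the equivalence immediately, which matches the author's preview that this part is ``almost trivial''.

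For the ``moreover'' part, given an arbitrary $F \in \mod \C$, I would start from a projective resolution of $F$ of length at most $1$ --- guaranteed by $\gldim(\mod \C) \leqslant 1$ --- and use idempotent completeness of $\C$ to identify projectives in $\mod \C$ with representables, writing this resolution as $0 \to \C(-,X) \xrightarrow{\C(-,f)} \C(-,Y) \to F \to 0$ for some $f \in \C(X,Y)$. Since $\C$ has enough injectives, I can pick a monomorphism $g \colon Y \hookrightarrow I$ with $I$ injective in $\C$; by Proposition \ref{proposition.3}, $\C$ is comprehensive, so $\C(-,I)$ is injective in $\mod \C$. The composition $\C(-,g)\C(-,f) \colon \C(-,X) \to \C(-,I)$ is a monomorphism (the Yoneda embedding preserves monomorphisms), and passing to cokernels yields a monomorphism $F \hookrightarrow \C(-,I)/\C(-,X)$ in $\mod \C$.

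It then remains to check that $\C(-,I)/\C(-,X)$ is itself injective in $\mod \C$, and this is the only step not handed to us directly by earlier results, hence where I expect whatever (very mild) obstacle lies. I would invoke the classical fact that quotients of injectives in a hereditary abelian category are again injective: given any short exact sequence $0 \to A \to J \to B \to 0$ with $J$ injective and any $G \in \mod \C$, the fragment $\Ext^{1}(G,J) \to \Ext^{1}(G,B) \to \Ext^{2}(G,A)$ of the long exact sequence of $\Ext$ sandwiches $\Ext^{1}(G,B)$ between two zeros, using injectivity of $J$ on the left and $\gldim(\mod \C) \leqslant 1$ on the right, forcing $B$ to be injective. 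Applying this with $A = \C(-,X)$ and $J = \C(-,I)$ delivers the injectivity of $\C(-,I)/\C(-,X)$, and hence the desired embedding of $F$ into an injective of $\mod \C$.
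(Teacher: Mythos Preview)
Your proof is correct and follows essentially the same approach as the paper. For the equivalence, the paper likewise invokes Proposition \ref{proposition.4} together with the remark (via \cite[Theorem A.5]{2409.10438}) that $\C$ is $0$-abelian iff $\C$ is coherent with $\gldim(\mod\C)\leqslant 1$; for the ``moreover'' clause, the paper simply defers to adapting \cite[Proposition 4.20]{MR4392222}, whereas you spell out the natural direct argument (embed a projective resolution into a representable injective and use that quotients of injectives are injective when $\gldim\leqslant 1$), which is exactly the kind of argument such a reference would contain.
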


\begin{proof}
The main claim follows from Proposition \ref{proposition.4} and \cite[Theorem A.5]{2409.10438}. The second claim about $\mod \C$ having enough injectives can be verified by adapting the proof of \cite[Proposition 4.20]{MR4392222} to our context.
\end{proof}

The same arguments used above show that $\C$ is $0$-abelian and has enough projectives if and only if $\C$ is coherent and $\gldim (\mod \C^{\op}) \leqslant 1 \leqslant \domdim (\mod \C^{\op})$. Also, under such conditions, $\mod \C^{\op}$ has enough injectives. Moreover, although Theorem \ref{theorem.2} and its dual statement give inequalities for the global and dominant dimensions of $\mod \C$ and $\mod \C^{\op}$, there are only two possibilities for them. Indeed, if $\C$ is $0$-abelian and has enough injectives, then either $\C$ is von Neumann regular, so that $\gldim (\mod \C) = 0$ and $\domdim (\mod \C) = \infty$, or it happens that $\gldim (\mod \C) = 1 = \domdim (\mod \C)$. Dually, if $\C$ is $0$-abelian and has enough projectives, then either $\gldim (\mod \C^{\op}) = 0$ and $\domdim (\mod \C^{\op}) = \infty$, or $\gldim (\mod \C^{\op}) = 1 = \domdim (\mod \C^{\op})$.

Let us also remark that since we know from Theorem \ref{theorem.2} and its dual statement that $\mod \C$ and $\mod \C^{\op}$ have enough injectives when $\C$ is a $0$-abelian category with enough injectives and enough projectives, respectively, in these cases, it would be convenient to know how to get injective resolutions of objects in $\mod \C$ and in $\mod \C^{\op}$. We will show how to construct such injective resolutions in Proposition \ref{proposition.21} and its dual result. But before we get there, we need to develop the theory further.

\subsection{A bifunctor analogous to \texorpdfstring{$\Ext^{1}$}{Ext1}}\label{subsection.4.2}

In this subsection, we define a biadditive bifunctor $\E_{\C}(-,-)$ for a $0$-abelian category $\C$ with enough injectives or projectives that is analogous to the bifunctor $\Ext_{\A}^{1}(-,-)$ for an abelian category $\A$. Moreover, we present a few results concerning this bifunctor that provide evidence for this analogy. Among these results, we prove in Theorem \ref{theorem.4} that every bimorphism in a $0$-abelian category with enough injectives or projectives induces a ``not so long exact sequence'', which can be regarded as a $0$-abelian version of the long exact sequence involving the bifunctors $\Ext_{\A}^{i}(-,-)$ induced by a short exact sequence in an abelian category $\A$.

Let us start with some elementary observations. When one learns in kindergarten how to define the bifunctors $\Ext_{\A}^{i}(-,-)$ for an abelian category $\A$ with enough injectives or projectives, one uses injective or projective resolutions of objects in $\A$ to do so. Thus, in attempting to define analogous bifunctors for a $0$-abelian category $\C$ with enough injectives or projectives, we could ask what the analogues of such resolutions would be for objects in $\C$. Well, in the abelian case, these resolutions are, in particular, exact sequences, which can be viewed as splices of short exact sequences. Therefore, in the $0$-abelian case, it is reasonable to think that the analogues of such resolutions should consist of splices of bimorphisms. However, the splice of two bimorphisms is again a bimorphism.\footnote{This indeed makes sense, as $0 + 0 = 0$.} Hence it is expected that the analogues of injective and projective resolutions of an object $X \in \C$ are given by, respectively, bimorphisms $X \to I$ with $I$ injective and bimorphisms $P \to X$ with $P$ projective. Fortunately, under the appropriate assumptions, such bimorphisms do exist, as we show next.

\begin{proposition}\label{proposition.15}
Let $\B$ be an arbitrary category with enough injectives. If $\B$ has $0$-cokernels, then for every $X \in \B$ there is a bimorphism $X \to I$ in $\B$ with $I$ injective.
\end{proposition}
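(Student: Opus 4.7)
The plan is to bootstrap from an ordinary monomorphism $X \to I$ into an injective object, using the $0$-cokernel to make it into a bimorphism, and then verifying that the new codomain remains injective.

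First, I would invoke the hypothesis that $\B$ has enough injectives to obtain a monomorphism $u \colon X \to I$ with $I$ injective. Then I would apply the assumption that $\B$ has $0$-cokernels to the morphism $u$, producing a decomposition $u = rs$ in $\B$, where $s \colon X \to Z$ is an epimorphism and $r \colon Z \to I$ is a split monomorphism.

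Next, I would observe that $s$ is automatically a monomorphism: if $sa = sb$, then $ua = rsa = rsb = ub$, and hence $a = b$ because $u$ is a monomorphism. Consequently, $s \colon X \to Z$ is a bimorphism in $\B$, which is the desired bimorphism, provided that I can show $Z$ is injective.

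For the final step, I would verify that $Z$ is injective by using the retraction $r' \colon I \to Z$ of $r$ (so that $r'r = 1_{Z}$). Given any monomorphism $\iota \colon A \to B$ in $\B$ and any morphism $f \colon A \to Z$, the morphism $rf \colon A \to I$ extends along $\iota$ to some $h \colon B \to I$ by injectivity of $I$, and then $r'h \colon B \to Z$ satisfies $r'h \iota = r' r f = f$, which shows that $f$ extends along $\iota$. Hence $Z$ is injective, and $s \colon X \to Z$ is a bimorphism with injective codomain. I do not anticipate any serious obstacle here; the only subtlety is the standard fact that a split-mono retract of an injective object is injective, which I would weave directly into the argument rather than isolate as a lemma.
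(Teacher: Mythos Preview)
Your proposal is correct and follows exactly the same approach as the paper: take a monomorphism into an injective, apply the $0$-cokernel to obtain the factorization $u = rs$, and observe that $s$ is a bimorphism with injective codomain. The paper's proof is a single sentence that leaves the verifications (that $s$ is mono and that the retract of an injective is injective) to the reader, whereas you have written them out explicitly.
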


\begin{proof}
Consider $X \in \B$ and let $X \to J$ be a monomorphism in $\B$ with $J$ injective. Then its $0$-cokernel is a bimorphism $X \to I$ in $\B$ with $I$ injective.
\end{proof}

By duality, we deduce from Proposition \ref{proposition.15} that if $\B$ is a category with enough projectives and $0$-kernels, then for every $X \in \B$ there is a bimorphism $P \to X$ in $\B$ with $P$ projective. Moreover, it is easy to verify that, upon their existence, the bimorphisms $X \to I$ and $P \to X$ with $I$ injective and $P$ projective are uniquely determined by $X$ up to a unique isomorphism. Furthermore, note that, assuming the existence of these bimorphisms, the assignments of $X$ to $I$ and of $X$ to $P$ are functorial in $X$. Well, let us elaborate on these comments.

Recall that a subcategory $\A$ of a category $\B$ is called \textit{reflective} in $\B$ if the inclusion functor $\A \to \B$ has a left adjoint. Equivalently, $\A$ is reflective in $\B$ if and only if for every object $X \in \B$ there is a morphism $\eta_{X} \in \B(X,X_{\A})$ in $\B$ with $X_{\A} \in \A$ such that for every morphism $f \in \B(X,A)$ in $\B$ with $A \in \A$ there is a unique morphism $g \in \A(X_{\A},A)$ such that $f = g \eta_{X}$. If this is the case, then the assignment of $X$ to $X_{\A}$ defines a functor $(-)_{\A} : \B \to \B$ in such a way that the morphisms $\eta_{X}$ form a natural transformation $\eta : 1_{\B} \to (-)_{\A}$, which is the unit of the adjunction given by the left adjoint of the inclusion $\A \to \B$. Also, let us say that $\A$ is \textit{epimonoreflective} in $\B$ if $\A$ is reflective in $\B$ and the morphisms $\eta_{X}$ above (the components of the unit of the adjunction) are bimorphisms for every $X \in \B$.\footnote{Maybe the term ``bireflective'' would be more appropriate, but it seems that this is already used in the literature to refer to a subcategory that is both reflective and coreflective.} Dually, $\A$ is called \textit{coreflective} in $\B$ if the inclusion functor $\A \to \B$ has a right adjoint, and similar (dual) remarks to the ones above hold (which involve the counit of the adjunction). Finally, we say that $\A$ is \textit{epimonocoreflective} in $\B$ if $\A$ is coreflective in $\B$ and the components of the counit of the adjunction given by the right adjoint of the inclusion $\A \to \B$ are bimorphisms. For more details, see \cite{MR1712872}.

Given a category $\B$, let $\Injectives (\B)$ and $\Projectives (\B)$ be the subcategories of $\B$ consisting of its injective and projective objects, respectively. The next result ties up the previous two paragraphs.

\begin{proposition}\label{proposition.16}
Let $\B$ be an arbitrary category. The following are equivalent:
\begin{enumerate}
    \item[(a)] For every $X \in \B$ there is a bimorphism $X \to I$ in $\B$ with $I \in \Injectives (\B)$.
    \item[(b)] $\Injectives (\B)$ is epimonoreflective in $\B$.
    \item[(c)] $\Injectives (\B)$ is reflective in $\B$ and $\B$ has enough injectives.
\end{enumerate}
\end{proposition}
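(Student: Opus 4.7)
The plan is to prove the cycle (a) $\Leftrightarrow$ (b) directly, (b) $\Rightarrow$ (c) as a triviality, and then close the loop with (c) $\Rightarrow$ (b) via a short argument that leverages the embedding of arbitrary objects into injectives. Throughout, I will interpret the bimorphisms $X \to I$ appearing in (a) as candidate components of the unit of a left adjoint to the inclusion $\Injectives(\B) \to \B$.

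For (a) $\Rightarrow$ (b), let $\eta_{X} \colon X \to I_{X}$ be the bimorphism provided by (a). I need to show that $\eta_{X}$ is universal among morphisms from $X$ to an injective object. So, for any $f \in \B(X,J)$ with $J \in \Injectives(\B)$, existence of a factorization $f = g \eta_{X}$ with $g \in \B(I_{X}, J)$ follows because $\eta_{X}$ is a monomorphism and $J$ is injective; uniqueness follows because $\eta_{X}$ is an epimorphism. This is exactly the condition that $\Injectives(\B)$ is reflective, with unit components $\eta_{X}$, which are bimorphisms by hypothesis, so (b) holds. The direction (b) $\Rightarrow$ (a) is obtained by reading off the unit components of the reflection.

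For (b) $\Rightarrow$ (c), observe that the unit components, being bimorphisms, are in particular monomorphisms into injective objects, so $\B$ has enough injectives; reflectivity is part of the hypothesis. The main content of the proposition lies in the converse (c) $\Rightarrow$ (b). Let $\eta_{X} \colon X \to X_{\I}$ be the unit of the given reflection, and let me show that it is a bimorphism. For monomorphicity, enough injectives provides a monomorphism $m \colon X \to J$ with $J \in \Injectives(\B)$, and the universal property factors $m = g \eta_{X}$, forcing $\eta_{X}$ to be a monomorphism.

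The subtler point, which I expect to be the main obstacle, is epimorphicity of $\eta_{X}$, because the universal property of the reflection only supplies uniqueness of factorizations into injective targets, and there is no \emph{a priori} reason to control morphisms out of $X_{\I}$ into non-injective objects. The trick is to reduce to the injective case by embedding: given $u, v \in \B(X_{\I}, A)$ with $u \eta_{X} = v \eta_{X}$, pick a monomorphism $m \colon A \to J$ into an injective $J$ (using enough injectives again); then $mu$ and $mv$ are two morphisms $X_{\I} \to J$ whose precompositions with $\eta_{X}$ coincide, so the uniqueness clause of the universal property forces $mu = mv$, and monomorphicity of $m$ yields $u = v$. Hence $\eta_{X}$ is a bimorphism, establishing (b).
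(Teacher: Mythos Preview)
Your proof is correct and follows essentially the same approach as the paper. The only cosmetic difference is in the epimorphicity step of (c) $\Rightarrow$ (b): you embed the target $A$ into an injective via an arbitrary monomorphism $m$ furnished by enough injectives, whereas the paper uses the reflection unit $\eta_{A}$ itself (just shown to be a monomorphism) to pass to an injective target; both reductions are the same trick.
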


\begin{proof}
It is straightforward to conclude from the above discussion that (a) implies (b). Moreover, (b) trivially implies (c), and we prove below that (c) implies (a).

Suppose that $\Injectives (\B)$ is reflective in $\B$ and that $\B$ has enough injectives. Then for each $X \in \B$ there is a morphism $\eta_{X} \in \B(X,X_{\Injectives (\B)})$ in $\B$ with $X_{\Injectives (\B)} \in \Injectives (\B)$ such that for every morphism $f \in \B(X,I)$ in $\B$ with $I \in \Injectives (\B)$ there is a unique morphism $g \in \Injectives (\B) (X_{\Injectives (\B)}, I)$ such that $f = g \eta_{X}$. We claim that each $\eta_{X}$ is a bimorphism in $\B$. In fact, because $\B$ has enough injectives, there is a monomorphism $f \in \B(X,I)$ in $\B$ with $I \in \Injectives (\B)$, and since $f = g \eta_{X}$ for some morphism $g$ in $\B$, it follows that $\eta_{X}$ is also a monomorphism. Furthermore, if $v, w \in \B(X_{\Injectives (\B)}, V)$ are morphisms in $\B$ for which $v \eta_{X} = w \eta_{X}$, then $\eta_{V} v \eta_{X} = \eta_{V} w \eta_{X}$, and the universal property of $\eta_{X}$ implies that $\eta_{V} v = \eta_{V} w$. But we have just seen that $\eta_{V}$ is a monomorphism, hence $v = w$, so that $\eta_{X}$ is an epimorphism.
\end{proof}


By taking opposite categories in Proposition \ref{proposition.16}, we also deduce that, for a category $\B$, the following are equivalent:
\begin{enumerate}
    \item[(a)] For every $X \in \B$ there is a bimorphism $P \to X$ in $\B$ with $P \in \Projectives (\B)$.
    \item[(b)] $\Projectives (\B)$ is epimonocoreflective in $\B$.
    \item[(c)] $\Projectives (\B)$ is coreflective in $\B$ and $\B$ has enough projectives.
\end{enumerate}

Now, let us consider the above results for a $0$-abelian category. For the rest of this section, let us denote $\Injectives (\C)$ and $\Projectives (\C)$ simply by $\Injectives$ and $\Projectives$, respectively.

It follows from Theorem \ref{theorem.3} and Propositions \ref{proposition.15} and \ref{proposition.16} that if $\C$ is $0$-abelian and has enough injectives, then there is a functor $(-)_{\Injectives} : \C \to \C$ that takes values in $\Injectives$ and a natural transformation $\eta : 1_{\C} \to (-)_{\Injectives}$ such that $\eta_{X}$ is a bimorphism in $\C$ for every $X \in \C$. Similarly, if $\C$ is $0$-abelian and has enough projectives, then there is a functor $(-)_{\Projectives} : \C \to \C$ that takes values in $\Projectives$ and a natural transformation $\varepsilon : (-)_{\Projectives} \to 1_{\C}$ such that $\varepsilon_{X}$ is a bimorphism in $\C$ for every $X \in \C$. Also, note that, when they exist, the functors $(-)_{\Injectives}$ and $(-)_{\Projectives}$ are additive. With these remarks, we can now define the bifunctor $\E_{\C}(-,-)$ that we have mentioned in the beginning of this subsection.

Assume that $\C$ is a $0$-abelian category with enough injectives. Then, for each object $X \in \C$, choose a cokernel \[ \begin{tikzcd}
0 \arrow[r] & {\C(-,X)} \arrow[r, "{\C(-,\eta_{X})}"] &[1.8em] {\C(-,X_{\Injectives})} \arrow[r, "{\psi_{-,X}}"] &[0.8em] {\E_{\C}(-,X)} \arrow[r] & 0
\end{tikzcd} \] of $\C(-,\eta_{X})$ in $\mod \C$.\footnote{The author was very hesitant to introduce the label $\psi_{-,X}$ for the above cokernel, as it makes the notation heavier, but eventually realized that the clarity it provides justifies the loss of elegance.} Once these choices are made, observe that, given a morphism $f \in \C(X,Y)$ in $\C$, there is a unique morphism $\E_{\C}(-,f) \in \Hom(\E_{\C}(-,X), \E_{\C}(-,Y))$ making the diagram \[ \begin{tikzcd}
0 \arrow[r] & {\C(-,X)} \arrow[r, "{\C(-,\eta_{X})}"] \arrow[d, "{\C(-,f)}"'] &[1.8em] {\C(-,X_{\Injectives})} \arrow[r, "{\psi_{-,X}}"] \arrow[d, "{\C(-,f_{\Injectives})}"] &[0.8em] {\E_{\C}(-,X)} \arrow[r] \arrow[d, "{\E_{\C}(-,f)}"] & 0 \\
0 \arrow[r] & {\C(-,Y)} \arrow[r, "{\C(-,\eta_{Y})}"']                        & {\C(-,Y_{\Injectives})} \arrow[r, "{\psi_{-,Y}}"']                                      & {\E_{\C}(-,Y)} \arrow[r]                             & 0
\end{tikzcd} \] in $\mod \C$ commute. Clearly, the above assignments of an object $X \in \C$ to $\E_{\C}(-,X)$ and of a morphism $f$ in $\C$ to $\E_{\C}(-,f)$ define an additive functor $\C \to \mod \C$, which, in turn, defines a biadditive bifunctor $\E_{\C}(-,-) : \C^{\op} \times \C \to \Ab$.

It is convenient to point out that, under the conditions of the above paragraph, given $X,Y \in \C$, the abelian group $\E_{\C}(X,Y)$ is computed as follows: First, take a bimorphism $Y \to Y_{\Injectives}$ in $\C$ with $Y_{\Injectives}$ injective (which is unique up to a unique isomorphism). Then apply $\C(X,-)$ to this bimorphism. The cokernel of the resulting morphism in $\Ab$ is (up to isomorphism) the abelian group $\E_{\C}(X,Y)$. Equivalently, $\E_{\C}(X,Y)$ is the homology of the complex \[ \begin{tikzcd}
0 \arrow[r] & {\C(X,Y)} \arrow[r] & {\C(X,Y_{\Injectives})} \arrow[r] & 0
\end{tikzcd} \] at the position of $\C(X,Y_{\Injectives})$. Hopefully, the reader will agree that this procedure to compute $\E_{\C}(X,Y)$ resembles the computation of $\Ext_{\A}^{1}(A,B)$ via an injective resolution of $B$, where $\A$ is an abelian category with enough injectives and $A, B \in \A$.

When $\C$ is a $0$-abelian category with enough projectives, a similar reasoning can be carried out to define a biadditive bifunctor $\E'_{\C}(-,-) : \C^{\op} \times \C \to \Ab$. For the sake of clarity, let us write down the details, which, albeit repetitive, are instructive.

Suppose that $\C$ is $0$-abelian and has enough projectives. For each object $X  \in \C$, choose a cokernel \[ \begin{tikzcd}
0 \arrow[r] & {\C(X,-)} \arrow[r, "{\C(\varepsilon_{X},-)}"] &[1.8em] {\C(X_{\Projectives},-)} \arrow[r, "{\omega_{X,-}}"] &[0.8em] {\E'_{\C}(X,-)} \arrow[r] & 0
\end{tikzcd} \] of $\C(\varepsilon_{X},-)$ in $\mod \C^{\op}$. Then, for each morphism $f \in \C(X,Y)$ in $\C$, there is a unique morphism $\E'_{\C}(f,-) \in \Hom(\E'_{\C}(Y,-), \E'_{\C}(X,-))$ making the diagram \[ \begin{tikzcd}
0 \arrow[r] & {\C(Y,-)} \arrow[r, "{\C(\varepsilon_{Y},-)}"] \arrow[d, "{\C(f,-)}"'] &[1.8em] {\C(Y_{\Projectives},-)} \arrow[r, "{\omega_{Y,-}}"] \arrow[d, "{\C(f_{\Projectives},-)}"] &[0.8em] {\E'_{\C}(Y,-)} \arrow[r] \arrow[d, "{\E'_{\C}(f,-)}"] & 0 \\
0 \arrow[r] & {\C(X,-)} \arrow[r, "{\C(\varepsilon_{X},-)}"']                        & {\C(X_{\Projectives},-)} \arrow[r, "{\omega_{X,-}}"']                                       & {\E'_{\C}(X,-)} \arrow[r]                              & 0
\end{tikzcd} \] in $\mod \C$ commute. Thus, the assignments of an object $X \in \C$ to $\E'_{\C}(X,-)$ and of a morphism $f$ in $\C$ to $\E'_{\C}(f,-)$ define an additive functor $\C^{\op} \to \mod \C^{\op}$, which then defines a biadditive bifunctor $\E'_{\C}(-,-) : \C^{\op} \times \C \to \Ab$.

Under the assumptions of the above paragraph, note that, for $X,Y \in \C$, the abelian group $\E'_{\C}(X,Y)$ is computed as follows: Take a bimorphism $X_{\Projectives} \to X$ in $\C$ with $X_{\Projectives}$ projective (which is unique up to a unique isomorphism), then apply $\C(-,Y)$ to it. The cokernel of the resulting morphism in $\Ab$ is (up to isomorphism) the abelian group $\E'_{\C}(X,Y)$, which coincides with the homology of the complex \[ \begin{tikzcd}
0 \arrow[r] & {\C(X,Y)} \arrow[r] & {\C(X_{\Projectives},Y)} \arrow[r] & 0
\end{tikzcd} \] at the position of $\C(X_{\Projectives},Y)$. Finally, note that these steps to compute the abelian group $\E'_{\C}(X,Y)$ resemble the computation of $\Ext^{1}_{\A}(A,B)$ via a projective resolution of $A$, where $\A$ is an abelian category with enough projectives and $A, B \in \A$.

At this point, the reader might be asking whether the bifunctors $\E_{\C}(-,-)$ and $\E'_{\C}(-,-)$ coincide when $\C$ is a $0$-abelian category with enough injectives and enough projectives. Well, it turns out that they do coincide, as we show in Proposition \ref{proposition.18}. To prove this, we will make use of the following result:

\begin{proposition}\label{proposition.17}
Assume that $\C$ is a $0$-abelian category with enough injectives and enough projectives. Then $(-)_{\Injectives}$ is right adjoint to $(-)_{\Projectives}$.
\end{proposition}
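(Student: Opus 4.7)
The plan is to exhibit, for all $X, Y \in \C$, a natural isomorphism $\C(X_{\Projectives}, Y) \cong \C(X, Y_{\Injectives})$, built directly from the universal bimorphisms $\varepsilon_X \colon X_{\Projectives} \to X$ and $\eta_Y \colon Y \to Y_{\Injectives}$ provided by Proposition \ref{proposition.16} and its dual.

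First I would define $\Phi \colon \C(X_{\Projectives}, Y) \to \C(X, Y_{\Injectives})$ as follows. Given $f \colon X_{\Projectives} \to Y$, the injectivity of $Y_{\Injectives}$ applied to the monomorphism $\varepsilon_X$ yields a morphism $g \colon X \to Y_{\Injectives}$ with $g \varepsilon_X = \eta_Y f$; since $\varepsilon_X$ is \emph{also} an epimorphism (being a bimorphism), such $g$ is unique, and I set $\Phi(f) := g$. Dually, I would define $\Psi \colon \C(X, Y_{\Injectives}) \to \C(X_{\Projectives}, Y)$ by lifting $g\varepsilon_X$ through the epimorphism $\eta_Y$ using projectivity of $X_{\Projectives}$, obtaining $f \colon X_{\Projectives} \to Y$ with $\eta_Y f = g \varepsilon_X$; monicity of $\eta_Y$ ensures uniqueness.

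Next I would verify that $\Phi$ and $\Psi$ are mutually inverse. For $g \in \C(X, Y_{\Injectives})$ the composite $\Phi(\Psi(g)) \colon X \to Y_{\Injectives}$ satisfies $\Phi(\Psi(g)) \varepsilon_X = \eta_Y \Psi(g) = g \varepsilon_X$, so $\Phi(\Psi(g)) = g$ because $\varepsilon_X$ is epic; symmetrically, $\Psi(\Phi(f)) = f$ using monicity of $\eta_Y$. Naturality in $Y$ then reduces to the naturality square $\beta_{\Injectives} \eta_Y = \eta_{Y'} \beta$ for $\beta \colon Y \to Y'$: both $\beta_{\Injectives} \Phi(f)$ and $\Phi(\beta f)$ satisfy the characterizing equation $h \varepsilon_X = \eta_{Y'} \beta f$, hence coincide. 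Naturality in $X$ is analogous, using $\alpha \varepsilon_X = \varepsilon_{X'} \alpha_{\Projectives}$ for $\alpha \colon X \to X'$.

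I do not anticipate a substantial obstacle; the argument rests entirely on the fact that \emph{both} $\varepsilon_X$ and $\eta_Y$ are bimorphisms in the $0$-abelian setting, so each supplies both an existence property (through injectivity or projectivity) and a uniqueness property (through being monic or epic). Without either half of this, one would only obtain a surjection of hom-sets rather than a bijection, so the bimorphism nature of the unit and counit---a hallmark of $0$-abelianness---is precisely what makes the adjunction go through.
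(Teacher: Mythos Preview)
Your proposal is correct and takes essentially the same approach as the paper: the paper simply notes that $\C(X_{\Projectives},\eta_Y)$ and $\C(\varepsilon_X,Y_{\Injectives})$ are each isomorphisms (because $\eta_Y$, $\varepsilon_X$ are bimorphisms and $X_{\Projectives}$, $Y_{\Injectives}$ are projective, injective), and takes the composite $\C(\varepsilon_X,Y_{\Injectives})^{-1}\,\C(X_{\Projectives},\eta_Y)$ as the natural isomorphism. Your characterizing equation $g\,\varepsilon_X = \eta_Y\,f$ is exactly the pointwise description of this same map, so your $\Phi$ coincides with the paper's isomorphism.
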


\begin{proof}
Let $X,Y \in \C$ be arbitrary. Since $\eta_{Y}$ is a bimorphism and $X_{\Projectives}$ is projective, it follows that $\C(X_{\Projectives},\eta_{Y})$ is an isomorphism. Similarly, as $\varepsilon_{X}$ is a bimorphism and $Y_{\Injectives}$ is injective, $\C(\varepsilon_{X},Y_{\Injectives})$ is an isomorphism. Therefore, the composition \[ \C(\varepsilon_{X},Y_{\Injectives})^{-1} \C(X_{\Projectives},\eta_{Y}) : \C(X_{\Projectives},Y) \to \C(X,Y_{\Injectives}) \] is an isomorphism, which is easily checked to be natural in both $X$ and $Y$.
\end{proof}

\begin{proposition}\label{proposition.18}
Assume that $\C$ is a $0$-abelian category with enough injectives and enough projectives. Then the bifunctors $\E_{\C}(-,-)$ and $\E'_{\C}(-,-)$ are isomorphic.
\end{proposition}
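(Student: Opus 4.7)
The plan is to construct a natural isomorphism $\theta : \E_{\C}(-,-) \to \E'_{\C}(-,-)$ by identifying both bifunctors with a common cokernel, using the adjunction established in Proposition \ref{proposition.17}. Concretely, for each pair $X, Y \in \C$, the proof of Proposition \ref{proposition.17} produces the commutative square in $\Ab$
\[ \begin{tikzcd}
{\C(X,Y)} \arrow[r, "{\C(\varepsilon_{X},Y)}"] \arrow[d, "{\C(X,\eta_{Y})}"'] &[0.8em] {\C(X_{\Projectives},Y)} \arrow[d, "{\C(X_{\Projectives},\eta_{Y})}"] \\
{\C(X,Y_{\Injectives})} \arrow[r, "{\C(\varepsilon_{X},Y_{\Injectives})}"'] &[0.8em] {\C(X_{\Projectives},Y_{\Injectives})}
\end{tikzcd} \]
in which the right-hand and bottom arrows are isomorphisms (as recorded inside the proof of Proposition \ref{proposition.17}) and both paths equal the map $\C(\varepsilon_{X},\eta_{Y})$ sending $\phi$ to $\eta_{Y} \phi \varepsilon_{X}$.

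Next, I would invoke the elementary fact that for composable morphisms $u, v$ in $\Ab$, the cokernel of $vu$ is canonically isomorphic to $\Coker u$ whenever $v$ is an isomorphism, and to $\Coker v$ whenever $u$ is an isomorphism (the first because an isomorphism takes the image of $u$ bijectively onto the image of $vu$; the second because $\Image(vu) = \Image v$). Applied to the two factorizations of $\C(\varepsilon_{X},\eta_{Y})$ above, this yields a chain of canonical isomorphisms
\[ \E_{\C}(X,Y) = \Coker \C(X,\eta_{Y}) \;\simeq\; \Coker \C(\varepsilon_{X},\eta_{Y}) \;\simeq\; \Coker \C(\varepsilon_{X},Y) = \E'_{\C}(X,Y), \]
whose composition I denote by $\theta_{X,Y}$.

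Finally, naturality of $\theta$ in both variables reduces to the naturality of $\eta : 1_{\C} \to (-)_{\Injectives}$ and $\varepsilon : (-)_{\Projectives} \to 1_{\C}$, together with the bifunctoriality of $\C(-,-)$: for any morphisms $f \in \C(X,X')$ and $g \in \C(Y,Y')$, the copies of the square above at $(X,Y)$ and at $(X',Y')$ are connected by the maps induced from $f$, $f_{\Projectives}$, $g$ and $g_{\Injectives}$, and passing to cokernels in the two different ways produces the naturality squares for $\theta$. The main obstacle here is purely organizational---keeping track of which factor is being varied during the diagram chase---rather than conceptual, and no hypotheses beyond those already in the statement are needed.
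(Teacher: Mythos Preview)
Your proposal is correct and essentially the same as the paper's argument. The paper also uses the isomorphisms $\C(X_{\Projectives},\eta_{Y})$ and $\C(\varepsilon_{X},Y_{\Injectives})$ from the proof of Proposition~\ref{proposition.17}; instead of passing through the common cokernel $\Coker \C(\varepsilon_{X},\eta_{Y})$, it composes them into $\phi_{X,Y} = \C(\varepsilon_{X},Y_{\Injectives})^{-1}\C(X_{\Projectives},\eta_{Y})$, checks $\phi_{X,Y}\,\C(\varepsilon_{X},Y) = \C(X,\eta_{Y})$, and then applies the five lemma to the resulting ladder of short exact sequences---which is just another way of packaging your cokernel comparison.
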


\begin{proof}
Let $X,Y \in \C$ be arbitrary. By the proof of Proposition \ref{proposition.17}, there is an isomorphism $\C(X_{\Projectives},Y) \to \C(X,Y_{\Injectives})$, which we will denote by $\phi_{X,Y}$, that is natural in $X$ and $Y$, and makes the diagram \[ \begin{tikzcd}
{\C(X,Y)} \arrow[r, "{\C(\varepsilon_{X},Y)}"] \arrow[d, equal] &[1.8em] {\C(X_{\Projectives},Y)} \arrow[d, "{\phi_{X,Y}}"] \\
{\C(X,Y)} \arrow[r, "{\C(X,\eta_{Y})}"']                         & {\C(X,Y_{\Injectives})}                     
\end{tikzcd} \] in $\Ab$ commute. Therefore, there is a unique morphism $\delta_{X,Y} \in \Ab(\E'_{\C}(X,Y),\E_{\C}(X,Y))$ that makes the diagram below with exact rows in $\Ab$ commute. \[ \begin{tikzcd}
0 \arrow[r] & {\C(X,Y)} \arrow[r, "{\C(\varepsilon_{X},Y)}"] \arrow[d, equal] &[1.8em] {\C(X_{\Projectives},Y)} \arrow[r, "{\omega_{X,Y}}"] \arrow[d, "{\phi_{X,Y}}"] &[0.8em] {\E'_{\C}(X,Y)} \arrow[r] \arrow[d, "{\delta_{X,Y}}"] & 0 \\
0 \arrow[r] & {\C(X,Y)} \arrow[r, "{\C(X,\eta_{Y})}"']                        & {\C(X,Y_{\Injectives})} \arrow[r, "{\psi_{X,Y}}"']                             & {\E_{\C}(X,Y)} \arrow[r]                              & 0
\end{tikzcd} \] By the snake lemma or the circular sequence or the five lemma, $\delta_{X,Y}$ is an isomorphism. Moreover, observe that, from their definitions, the morphisms $\psi_{X,Y}$ and $\omega_{X,Y}$ are natural in $X$ and $Y$. Therefore, since we also know that $\omega_{X,Y}$ is an epimorphism and $\phi_{X,Y}$ is natural in $X$ and $Y$, we can conclude that so is $\delta_{X,Y}$.
\end{proof}

Due to Proposition \ref{proposition.18}, from now on, when $\C$ is a $0$-abelian category with enough projectives, we will denote $\E'_{\C}(-,-)$ by $\E_{\C}(-,-)$.\footnote{This is not entirely an abuse of notation, because if $\C$ is $0$-abelian and has enough injectives and enough projectives, then, using the notation in the proof of Proposition \ref{proposition.18}, when defining $\E'_{\C}(X,-)$ for $X \in \C$, we can choose the cokernel of $\C(\varepsilon_{X},-)$ to be the composition $\psi_{X,-} \phi_{X,-}$. With these choices, we get an equality $\E'_{\C}(-,-) = \E_{\C}(-,-)$.}

The ``balancing'' property of the bifunctor $\E_{\C}(-,-)$ given in Proposition \ref{proposition.18} (for a $0$-abelian category $\C$ with enough injectives and enough projectives) gives evidence for our claim that $\E_{\C}(-,-)$ is analogous to $\Ext^{1}_{\A}(-,-)$ for an abelian category $\A$. Another result that supports this claim is the following:

\begin{proposition}\label{proposition.19}
Let $\C$ be a $0$-abelian category with enough injectives. An object $X \in \C$ is injective if and only if $\E_{\C}(-,X) = 0$.
\end{proposition}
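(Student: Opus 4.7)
The plan is to use the defining short exact sequence of $\E_{\C}(-,X)$, namely
\[ \begin{tikzcd} 0 \arrow[r] & {\C(-,X)} \arrow[r, "{\C(-,\eta_{X})}"] &[1.5em] {\C(-,X_{\Injectives})} \arrow[r, "{\psi_{-,X}}"] &[0.4em] {\E_{\C}(-,X)} \arrow[r] & 0 \end{tikzcd} \]
in $\mod \C$, and reduce the statement to showing that $\eta_{X}$ is an isomorphism precisely when $X$ is injective.

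For the converse direction, if $\E_{\C}(-,X) = 0$, then the above sequence forces $\C(-,\eta_{X})$ to be an isomorphism in $\mod \C$, and the Yoneda lemma yields that $\eta_{X}$ is an isomorphism in $\C$. Since $X_{\Injectives}$ is injective by construction, so is $X$.

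For the forward direction, assume that $X$ is injective. Since $\eta_{X}$ is in particular a monomorphism and $X$ is injective, the identity $1_{X}$ extends along $\eta_{X}$, producing a morphism $r \in \C(X_{\Injectives}, X)$ with $r \eta_{X} = 1_{X}$. Then $\eta_{X} r \eta_{X} = \eta_{X}$, and because $\eta_{X}$ is also an epimorphism, this gives $\eta_{X} r = 1_{X_{\Injectives}}$. Hence $\eta_{X}$ is an isomorphism, so $\C(-,\eta_{X})$ is an isomorphism in $\mod \C$, and its cokernel $\E_{\C}(-,X)$ vanishes.

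No step here should be a genuine obstacle: the short exact sequence is packaged into the very definition of $\E_{\C}(-,X)$, the Yoneda lemma handles the reflection from $\mod \C$ back to $\C$, and the retract-plus-epimorphism trick is the standard way to promote a split monomorphism that is also an epimorphism to an isomorphism. The only point to watch is that the argument does not require any reference to projectives or to the balancing result in Proposition~\ref{proposition.18}; it depends only on the fact, guaranteed by Propositions~\ref{proposition.15} and \ref{proposition.16}, that $\eta_{X}$ is a bimorphism from $X$ into an injective object.
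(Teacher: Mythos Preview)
Your proof is correct and follows essentially the same approach as the paper: both arguments reduce the statement to the equivalence ``$X$ is injective if and only if $\eta_{X}$ is an isomorphism'' and then pass through the Yoneda lemma and the defining short exact sequence. The paper compresses this into a single chain of equivalences, while you spell out the split-monomorphism-plus-epimorphism step explicitly; the content is the same.
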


\begin{proof}
In fact, $X$ is injective if and only if $\eta_{X}$ is an isomorphism if and only if $\C(-,\eta_{X})$ is an isomorphism if and only if the cokernel of $\C(-,\eta_{X})$ in $\mod \C$ is zero.
\end{proof}

By duality, Proposition \ref{proposition.19} also shows that if $\C$ is a $0$-abelian category with enough projectives, then an object $X \in \C$ is projective if and only if $\E_{\C}(X,-) = 0$.

Next, we show in Theorem \ref{theorem.4} that every bimorphism in a $0$-abelian category with enough injectives or projectives induces a ``not so long exact sequence''. As the name suggests, this sequence is analogous to the classical long exact sequence induced by a short exact sequence in an abelian category $\A$, which involves the bifunctors $\Ext^{i}_{\A}(-,-)$. To prove Theorem \ref{theorem.4}, we will use the following lemma:

\begin{lemma}\label{lemma.1}
Let $\C$ be a $0$-abelian category with enough injectives. For a morphism $f$ in $\C$, the following hold:
\begin{enumerate}
    \item[(a)] If $f$ is a monomorphism, then $f_{\Injectives}$ is a split monomorphism and $\C(-,f)$ is a monomorphism in $\mod \C$.
    \item[(b)] If $f$ is an epimorphism, then $f_{\Injectives}$ is a split epimorphism and $\E_{\C}(-,f)$ is an epimorphism in $\mod \C$.
\end{enumerate}
\end{lemma}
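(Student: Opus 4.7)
The plan for part (a) is to exploit the injectivity of $X_{\Injectives}$. Since $f$ and $\eta_{Y}$ are both monomorphisms, so is the composite $\eta_{Y} f : X \to Y_{\Injectives}$; by injectivity of $X_{\Injectives}$, the morphism $\eta_{X} : X \to X_{\Injectives}$ extends along $\eta_{Y} f$ to some $g : Y_{\Injectives} \to X_{\Injectives}$ satisfying $g \eta_{Y} f = \eta_{X}$. Substituting the naturality equation $\eta_{Y} f = f_{\Injectives} \eta_{X}$ and cancelling the epimorphism $\eta_{X}$ on the right yields $g f_{\Injectives} = 1_{X_{\Injectives}}$, so $f_{\Injectives}$ is a split monomorphism. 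The second assertion, that $\C(-,f)$ is a monomorphism in $\mod \C$, is immediate, since the Yoneda embedding preserves and reflects monomorphisms.

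For part (b), I would first conclude that $f_{\Injectives}$ is an epimorphism: the composite $\eta_{Y} f = f_{\Injectives} \eta_{X}$ is an epimorphism (as both $\eta_{Y}$ and $f$ are), and a routine cancellation argument then forces $f_{\Injectives}$ itself to be an epimorphism. To upgrade this to being split, I would invoke Theorem \ref{theorem.3} to obtain a $0$-kernel factorization $f_{\Injectives} = g h$ with $h : X_{\Injectives} \to W$ a split epimorphism and $g : W \to Y_{\Injectives}$ a monomorphism. Because $h$ is split epi, $W$ is a direct summand of the injective $X_{\Injectives}$, and hence $W$ is itself injective; moreover, $g$ inherits epi-ness from $f_{\Injectives} = g h$. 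A monomorphism out of an injective object splits on the left, so $g$ is a split monomorphism, which combined with $g$ being epi forces $g$ to be an isomorphism. Therefore $f_{\Injectives} = g h$ is a split epimorphism.

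For the remaining claim in (b), namely that $\E_{\C}(-, f)$ is an epimorphism in $\mod \C$, I would use the commutative diagram with exact rows defining $\E_{\C}(-, f)$: since $f_{\Injectives}$ is a split epimorphism in $\C$, so is $\C(-, f_{\Injectives})$ in $\mod \C$, and either a short diagram chase on the right-hand square (using that $\psi_{-,Y}$ is an epimorphism and cancelling the epimorphism $\psi_{-,X}$ on the right) or the snake lemma applied to the cokernel sequence then yields the conclusion. The main obstacle I expect is pinpointing the correct source of splitting: the naturality square alone is insufficient in both parts, and the crux in (b) is that injective objects in a $0$-abelian category are closed under direct summands, so the $0$-kernel factorization produces an injective intermediate object whose injectivity supplies the needed splitting.
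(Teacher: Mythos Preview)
Your proof is correct, and it takes a genuinely different route from the paper's. The paper works entirely in $\mod \C$: it builds the $3 \times 3$ diagram of kernels and cokernels of $\C(-,f)$, $\C(-,f_{\Injectives})$ and $\E_{\C}(-,f)$, uses right comprehensiveness (Proposition~\ref{proposition.3}) to make $\C(-,X_{\Injectives})$ and $\C(-,Y_{\Injectives})$ projective-injective in $\mod \C$, and then forces the relevant kernel or cokernel object to vanish via an effaceability argument (Proposition~\ref{proposition.6}), e.g.\ in (a) by showing the split epimorphism $\E_{\C}(-,X) \to L$ must be zero because $\E_{\C}(-,X)^{\ast} = 0$ and $L$ is representable. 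Your argument stays in $\C$ throughout: in (a) you extend $\eta_{X}$ along the monomorphism $\eta_{Y} f$ using injectivity of $X_{\Injectives}$ and cancel the epimorphism $\eta_{X}$; in (b) you factor $f_{\Injectives}$ via its $0$-kernel and use that the intermediate object, as a summand of an injective, is injective. Your approach is shorter and more elementary, and never invokes comprehensiveness or effaceability; the paper's approach, on the other hand, is consistent with its overarching functorial theme and exhibits more of the structure of $\mod \C$ (for instance, it shows directly that the cokernel of $\C(-,f_{\Injectives})$ vanishes, not merely that $f_{\Injectives}$ splits).
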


\begin{proof}
Let $f \in \C(X,Y)$ be a morphism in $\C$, and consider the commutative diagram with exact rows and exact columns \[ \begin{tikzcd}
            & 0 \arrow[d]                                                     &[1.8em] 0 \arrow[d]                                                                            &[0.8em] 0 \arrow[d]                                          &   \\
0 \arrow[r] & K \arrow[d] \arrow[r]                                           & L \arrow[r] \arrow[d]                                                                  & M \arrow[d]                                          &   \\
0 \arrow[r] & {\C(-,X)} \arrow[r, "{\C(-,\eta_{X})}"] \arrow[d, "{\C(-,f)}"'] & {\C(-,X_{\Injectives})} \arrow[r, "{\psi_{-,X}}"] \arrow[d, "{\C(-,f_{\Injectives})}"] & {\E_{\C}(-,X)} \arrow[r] \arrow[d, "{\E_{\C}(-,f)}"] & 0 \\
0 \arrow[r] & {\C(-,Y)} \arrow[r, "{\C(-,\eta_{Y})}"'] \arrow[d]              & {\C(-,Y_{\Injectives})} \arrow[r, "{\psi_{-,Y}}"'] \arrow[d]                           & {\E_{\C}(-,Y)} \arrow[r] \arrow[d]                   & 0 \\
            & F \arrow[d] \arrow[r]                                           & G \arrow[d] \arrow[r]                                                                  & H \arrow[d] \arrow[r]                                & 0 \\
            & 0                                                               & 0                                                                                      & 0                                                    &  
\end{tikzcd} \] in $\mod \C$, which is obtained by taking the kernels and cokernels of $\C(-,f)$, $\C(-,f_{\Injectives})$ and $\E_{\C}(-,f)$. By Proposition \ref{proposition.3}, $\C$ is right comprehensive, hence $\C(-,X_{\Injectives})$ and $\C(-,Y_{\Injectives})$ are both projective and injective in $\mod \C$. Therefore, as $\gldim (\mod \C) \leqslant 1$, we can conclude that $L$ and $G$ are also projective and injective in $\mod \C$. Hence there are $I, J \in \C$ such that $L \simeq \C(-,I)$ and $G \simeq \C(-,J)$.\footnote{Note that $I$ and $J$ are injective in $\C$, by \cite[Proposition 7.1]{2409.10438}. We will not need this fact, though.}

Now, suppose that $f$ is a monomorphism. Then $\C(-,f)$ is a monomorphism in $\mod \C$, so that $K = 0$, which implies that $L \to M$ is a monomorphism. Thus, the composition $L \to M \to \E_{\C}(-,X)$ is a monomorphism, which must split since $L$ is injective. Consequently, there is a split epimorphism $\E_{\C}(-,X) \to L$. However, because $\E_{\C}(-,X) \simeq \mr(\eta_{X})$ and $\eta_{X}$ is an epimorphism, it follows from Proposition \ref{proposition.6} that $\E_{\C}(-,X)^{\ast} = 0$. Therefore, as $L \simeq \C(-,I)$, we conclude that $\Hom(\E_{\C}(-,X),L) = 0$, so that the split epimorphism $\E_{\C}(-,X) \to L$ is the zero morphism, which implies that $L = 0$. Thus, $\C(-,f_{\Injectives})$ is a monomorphism in $\mod \C$, and because $\C(-,X_{\Injectives})$ is injective, it must split. Hence $f_{\Injectives}$ is a split monomorphism.

Next, assume that $f$ is an epimorphism. Since $\eta_{Y}$ is also an epimorphism and $f_{\Injectives} \eta_{X} = \eta_{Y} f$, we obtain that $f_{\Injectives}$ is an epimorphism. Therefore, as $G \simeq \mr(f_{\Injectives})$, it follows from Proposition \ref{proposition.6} that $G^{\ast} = 0$. But $G \simeq \C(-,J)$, hence $\Hom(G,G) = 0$, and we conclude that $G = 0$, which implies that $H = 0$. Consequently, $\C(-,f_{\Injectives})$ and $\E_{\C}(-,f)$ are both epimorphisms in $\mod \C$. Moreover, given that $\C(-,Y_{\Injectives})$ is projective, $\C(-,f_{\Injectives})$ is a split epimorphism, hence so is $f_{\Injectives}$.
\end{proof}

Note that, by taking opposite categories in Lemma \ref{lemma.1}, we can deduce that if $\C$ is $0$-abelian and has enough projectives, then, for a morphism $f$ in $\C$, the following hold:
\begin{enumerate}
    \item[(a)] If $f$ is an epimorphism, then $f_{\Projectives}$ is a split epimorphism and $\C(f,-)$ is a monomorphism in $\mod \C^{\op}$.
    \item[(b)] If $f$ is a monomorphism, then $f_{\Projectives}$ is a split monomorphism and $\E_{\C}(f,-)$ is an epimorphism in $\mod \C^{\op}$.
\end{enumerate}

\begin{theorem}[Not so long exact sequence]\label{theorem.4}
Let $\C$ be a $0$-abelian category with enough injectives. Every bimorphism $f \in \C(X,Y)$ in $\C$ induces an exact sequence \[ \begin{tikzcd}
0 \arrow[r] & {\C(-,X)} \arrow[r, "{\C(-,f)}"]           &[1.6em] {\C(-,Y)} \arrow[out=0, in=180, looseness=2, overlay, ld]     &   \\
            & {\E_{\C}(-,X)} \arrow[r, "{\E_{\C}(-,f)}"] & {\E_{\C}(-,Y)} \arrow[r] & 0
\end{tikzcd} \] in $\mod \C$.
\end{theorem}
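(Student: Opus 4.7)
The plan is to apply the snake lemma (or the circular sequence) to the commutative diagram in $\mod \C$ that defines $\E_{\C}(-,f)$, exploiting the fact that when $f$ is a bimorphism the middle vertical map is an isomorphism.

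First, I would set up the relevant facts from Lemma \ref{lemma.1}. Since $f$ is a bimorphism, it is both a monomorphism and an epimorphism. By Lemma \ref{lemma.1}(a), $f_{\Injectives}$ is a split monomorphism, and by Lemma \ref{lemma.1}(b), $f_{\Injectives}$ is a split epimorphism. Hence $f_{\Injectives}$ is an isomorphism in $\C$, so that $\C(-,f_{\Injectives})$ is an isomorphism in $\mod \C$. Moreover, Lemma \ref{lemma.1}(a) gives that $\C(-,f)$ is a monomorphism, and Lemma \ref{lemma.1}(b) gives that $\E_{\C}(-,f)$ is an epimorphism. This already pins down exactness at the first and last positions of the proposed sequence.

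Next, I would consider the defining commutative diagram with exact rows
\[ \begin{tikzcd}
0 \arrow[r] & {\C(-,X)} \arrow[r, "{\C(-,\eta_{X})}"] \arrow[d, "{\C(-,f)}"'] &[1.6em] {\C(-,X_{\Injectives})} \arrow[r, "{\psi_{-,X}}"] \arrow[d, "{\C(-,f_{\Injectives})}"] &[0.4em] {\E_{\C}(-,X)} \arrow[r] \arrow[d, "{\E_{\C}(-,f)}"] & 0 \\
0 \arrow[r] & {\C(-,Y)} \arrow[r, "{\C(-,\eta_{Y})}"']                        & {\C(-,Y_{\Injectives})} \arrow[r, "{\psi_{-,Y}}"']                                      & {\E_{\C}(-,Y)} \arrow[r]                             & 0
\end{tikzcd} \]
in $\mod \C$, and apply the snake lemma. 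Since $\C(-,f_{\Injectives})$ is an isomorphism, both its kernel and cokernel vanish, so the snake sequence collapses to an isomorphism $\coker \C(-,f) \to \ker \E_{\C}(-,f)$ induced by the connecting morphism. The composition
\[ \C(-,Y) \twoheadrightarrow \coker \C(-,f) \xrightarrow{\sim} \ker \E_{\C}(-,f) \hookrightarrow \E_{\C}(-,X) \]
is precisely $\psi_{-,X} \circ \C(-,f_{\Injectives})^{-1} \circ \C(-,\eta_{Y})$, which provides the middle morphism of the ``not so long exact sequence'' and makes it exact at both $\C(-,Y)$ and $\E_{\C}(-,X)$.

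There is no real obstacle here beyond recognizing that Lemma \ref{lemma.1} applied to a bimorphism forces $\C(-,f_{\Injectives})$ to be an isomorphism; once this is observed, the result is a direct application of the snake lemma (alternatively, a diagram chase, or the circular sequence suggested in Subsection \ref{subsection.1.1} applied to the composition $\psi_{-,Y} \circ \C(-,f_{\Injectives})$). The only minor care needed is to confirm that the ``connecting'' morphism produced in this way coincides with the morphism implicitly displayed in the statement, which is immediate from the construction of the snake map.
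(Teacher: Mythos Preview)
Your proof is correct and follows essentially the same approach as the paper: both use Lemma~\ref{lemma.1} to deduce that $f_{\Injectives}$ is an isomorphism, that $\C(-,f)$ is a monomorphism, and that $\E_{\C}(-,f)$ is an epimorphism, and then identify the connecting morphism as $\psi_{-,X} \circ \C(-,f_{\Injectives})^{-1} \circ \C(-,\eta_{Y})$. The only cosmetic difference is that you invoke the snake lemma explicitly, whereas the paper states directly that $\C(-,f)$ is a kernel and $\E_{\C}(-,f)$ is a cokernel of this connecting morphism; these amount to the same verification.
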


\begin{proof}
Let $f \in \C(X,Y)$ be a bimorphism in $\C$. It follows from Lemma \ref{lemma.1} that $f_{\Injectives}$ is an isomorphism, $\C(-,f)$ is a monomorphism in $\mod \C$ and $\E_{\C}(-,f)$ is an epimorphism in $\mod \C$. Therefore, given the commutative diagram with exact rows \[ \begin{tikzcd}
0 \arrow[r] & {\C(-,X)} \arrow[r, "{\C(-,\eta_{X})}"] \arrow[d, "{\C(-,f)}"'] &[1.8em] {\C(-,X_{\Injectives})} \arrow[r, "{\psi_{-,X}}"] \arrow[d, "{\C(-,f_{\Injectives})}"] &[0.8em] {\E_{\C}(-,X)} \arrow[r] \arrow[d, "{\E_{\C}(-,f)}"] & 0 \\
0 \arrow[r] & {\C(-,Y)} \arrow[r, "{\C(-,\eta_{Y})}"']                        & {\C(-,Y_{\Injectives})} \arrow[r, "{\psi_{-,Y}}"']                                      & {\E_{\C}(-,Y)} \arrow[r]                             & 0
\end{tikzcd} \] in $\mod \C$, if we set $\partial(-,f) = \psi_{-,X} \C(-,f_{\Injectives})^{-1} \C(-,\eta_{Y})$, then it is easy to check that $\C(-,f)$ is a kernel of $\partial(-,f)$ and that $\E_{\C}(-,f)$ is a cokernel of $\partial(-,f)$. Consequently, \[ \begin{tikzcd}
0 \arrow[r] & {\C(-,X)} \arrow[r, "{\C(-,f)}"] &[1.2em] {\C(-,Y)} \arrow[r, "{\partial{(-,f)}}"] &[1.2em] {\E_{\C}(-,X)} \arrow[r, "{\E_{\C}(-,f)}"] &[1.6em] {\E_{\C}(-,Y)} \arrow[r] & 0
\end{tikzcd} \] is an exact sequence in $\mod \C$.
\end{proof}

By replacing $\C$ by $\C^{\op}$ in Theorem \ref{theorem.4}, we conclude that if $\C$ is $0$-abelian and has enough projectives, then every bimorphism $f \in \C(X,Y)$ in $\C$ induces an exact sequence \[ \begin{tikzcd}
0 \arrow[r] & {\C(Y,-)} \arrow[r, "{\C(f,-)}"]           &[1.6em] {\C(X,-)} \arrow[out=0, in=180, looseness=2, overlay, ld]     &   \\
            & {\E_{\C}(Y,-)} \arrow[r, "{\E_{\C}(f,-)}"] & {\E_{\C}(X,-)} \arrow[r] & 0
\end{tikzcd} \] in $\mod \C^{\op}$. We will use the name \textit{not so long exact sequence} to refer to both the above sequence in $\mod \C^{\op}$ and to the sequence in $\mod \C$ from Theorem \ref{theorem.4}, and also to any of their evaluations at an object in $\C$, which gives an exact sequence in $\Ab$.

There is an obvious feature of the not so long exact sequence that needs to be mentioned, namely, that it terminates immediately, as opposed to the classical long exact sequence induced by a short exact sequence in an abelian category, which can be infinite. This fact suggests that, while $\E_{\C}(-,-)$ is analogous to $\Ext^{1}_{\A}(-,-)$ (for $\C$ a $0$-abelian category with enough injectives or projectives and $\A$ an abelian category), there are no analogues of $\Ext^{i}_{\A}(-,-)$ for $\C$ when $i \geqslant 2$. Or, if there are, they are just zero. In fact, Proposition \ref{proposition.22} supports this claim, since when $\A$ has enough injectives and enough projectives, there is an isomorphism of abelian groups $\Ext^{i}_{\A}(-,B) \otimes \Ext^{j}_{\A}(A,-) \simeq \Ext^{i+j}_{\A}(A,B)$ for every $i,j \geqslant 1$ and $A,B \in \A$.\footnote{Note that the conditions that $\A$ has enough injectives and enough projectives guarantee that $\Ext^{i}_{\A}(-,B) \in \mod \A$ and $\Ext^{j}_{\A}(A,-) \in \mod \A^{\op}$, respectively. Indeed, one can use an injective resolution of $B$ in $\A$ and a projective resolution of $A$ in $\A$ (and shifting arguments) to obtain a projective presentation of $\Ext^{i}_{\A}(-,B)$ and a projective presentation of $\Ext^{j}_{\A}(A,-)$, respectively. Thus, one can prove the isomorphism $\Ext^{i}_{\A}(-,B) \otimes \Ext^{j}_{\A}(A,-) \simeq \Ext^{i+j}_{\A}(A,B)$ by proceeding similarly to the proof of Proposition \ref{proposition.22}.}

\begin{proposition}\label{proposition.22}
Let $\C$ be a $0$-abelian category with enough injectives and enough projectives. Then $\E_{\C}(-,Y) \otimes \E_{\C}(X,-) = 0$ for every $X,Y \in \C$.
\end{proposition}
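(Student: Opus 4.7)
The plan is to exploit the right exactness of the tensor product $-\otimes_{\C}- : \mod\C \times \mod\C^{\op} \to \Ab$ together with the short projective resolution of $\E_{\C}(-,Y)$ in $\mod\C$ that comes built into its definition. By construction, the bimorphism $\eta_Y : Y \to Y_{\Injectives}$ gives rise to the exact sequence
\[
\C(-,Y) \xrightarrow{\C(-,\eta_Y)} \C(-,Y_{\Injectives}) \xrightarrow{\psi_{-,Y}} \E_{\C}(-,Y) \to 0
\]
in $\mod\C$. Applying $- \otimes_{\C} \E_{\C}(X,-)$ and invoking the co-Yoneda identification $\C(-,Z)\otimes_{\C}\E_{\C}(X,-) \cong \E_{\C}(X,Z)$ natural in $Z \in \C$, right exactness produces the exact sequence
\[
\E_{\C}(X,Y) \to \E_{\C}(X,Y_{\Injectives}) \to \E_{\C}(-,Y) \otimes_{\C} \E_{\C}(X,-) \to 0
\]
in $\Ab$.

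The conclusion is then immediate from Proposition \ref{proposition.19}: since $Y_{\Injectives}$ is injective, $\E_{\C}(-,Y_{\Injectives}) = 0$, so in particular $\E_{\C}(X,Y_{\Injectives}) = 0$, forcing $\E_{\C}(-,Y) \otimes_{\C} \E_{\C}(X,-) = 0$. A symmetric argument is available using the projective presentation $\C(X,-) \to \C(X_{\Projectives},-) \to \E_{\C}(X,-) \to 0$ in $\mod\C^{\op}$ and the dual of Proposition \ref{proposition.19}, which forces $\E_{\C}(X_{\Projectives},Y) = 0$ since $X_{\Projectives}$ is projective.

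There is no real obstacle; the only non-formal input is the co-Yoneda identification of $\C(-,Z)\otimes_{\C} \E_{\C}(X,-)$ with $\E_{\C}(X,Z)$, which is standard for the tensor product over $\C$. In spirit, the proof simply records that either variable of $\E_{\C}(-,-)$ becomes killed after one ``injective'' or ``projective'' step, which is precisely the reason the not so long exact sequence of Theorem \ref{theorem.4} terminates where it does.
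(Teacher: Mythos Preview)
Your argument is correct and is essentially identical to the paper's own proof: both apply $-\otimes_{\C}\E_{\C}(X,-)$ to the defining presentation of $\E_{\C}(-,Y)$, invoke the identification $\C(-,Z)\otimes_{\C}\E_{\C}(X,-)\simeq\E_{\C}(X,Z)$, and then kill the middle term via Proposition~\ref{proposition.19}. The only addition you make is the remark about the symmetric argument through $X_{\Projectives}$, which the paper omits.
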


\begin{proof}
Let $X,Y \in \C$ be arbitrary objects, and consider the short exact sequence \[ \begin{tikzcd}
0 \arrow[r] & {\C(-,Y)} \arrow[r, "{\C(-,\eta_{Y})}"] &[1.8em] {\C(-,Y_{\Injectives})} \arrow[r, "{\psi_{-,Y}}"] &[0.8em] {\E_{\C}(-,Y)} \arrow[r] & 0
\end{tikzcd} \] in $\mod \C$. Given that the functor $- \otimes \E_{\C}(X,-) : \mod \C \to \Ab$ preserves cokernels and satisfies that $\C(-,Z) \otimes \E_{\C}(X,-) \simeq \E_{\C}(X,Z)$ for every $Z \in \C$, see \cite[Appendix A]{2409.10438}, if we apply it to the above sequence, we obtain an exact sequence \[ \begin{tikzcd}
{\E_{\C}(X,Y)} \arrow[r] & {\E_{\C}(X,Y_{\Injectives})} \arrow[r] & {\E_{\C}(-,Y) \otimes \E_{\C}(X,-)} \arrow[r] & 0
\end{tikzcd} \] in $\Ab$. However, because $Y_{\Injectives}$ is injective, we have $\E_{\C}(X,Y_{\Injectives}) = 0$, by Proposition \ref{proposition.19}. Consequently, $\E_{\C}(-,Y) \otimes \E_{\C}(X,-) = 0$.
\end{proof}

\subsection{Their injective finitely presented modules}

Motivated by Theorem \ref{theorem.2}, we show next how the construction of the bifunctor $\E_{\C}(-,-)$ and the not so long exact sequence give injective resolutions of objects in $\mod \C$ and in $\mod \C^{\op}$, whenever $\C$ is a $0$-abelian category with enough injectives and enough projectives, respectively. Furthermore, by assuming these respective conditions on $\C$, we classify the injective objects in $\mod \C$ and $\mod \C^{\op}$ in Corollary \ref{corollary.3}, after proving a $0$-abelian version of a conjecture due to Auslander in Theorem \ref{theorem.5}.

\begin{lemma}\label{lemma.2}
Let $\C$ be a $0$-abelian category with enough injectives. Then $\E_{\C}(-,X)$ is injective in $\mod \C$ for every $X \in \C$.
\end{lemma}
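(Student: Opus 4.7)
The plan is to exploit the defining short exact sequence
\[
0 \to \C(-,X) \to \C(-,X_{\Injectives}) \to \E_{\C}(-,X) \to 0
\]
in $\mod \C$, together with the fact that $\mod \C$ is hereditary, to conclude that $\E_{\C}(-,X)$ is injective as a quotient of an injective object.

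More concretely, first I would observe that $\C(-,X_{\Injectives})$ is injective in $\mod \C$: since $\C$ is $0$-abelian, it is comprehensive by Proposition \ref{proposition.3}, and by definition of right comprehensive this yields injectivity of $\C(-,X_{\Injectives})$ in $\mod \C$ (because $X_{\Injectives}$ is injective in $\C$). Next, to check that $\E_{\C}(-,X)$ is injective in $\mod \C$, I would show that $\Ext^{1}(F, \E_{\C}(-,X)) = 0$ for every $F \in \mod \C$. Applying $\Hom(F,-)$ to the short exact sequence above gives a piece of the long exact $\Ext$-sequence
\[
\Ext^{1}(F, \C(-,X_{\Injectives})) \to \Ext^{1}(F, \E_{\C}(-,X)) \to \Ext^{2}(F, \C(-,X)).
\]
The left-hand term vanishes by injectivity of $\C(-,X_{\Injectives})$, and the right-hand term vanishes because $\gldim(\mod \C) \leqslant 1$ by axiom ($0$F1). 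Hence $\Ext^{1}(F, \E_{\C}(-,X)) = 0$, as required.

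There is no real obstacle here; the argument is a direct application of the defining short exact sequence, the comprehensiveness of $\C$, and the hereditary bound on the global dimension of $\mod \C$. The only thing worth double-checking is that the short exact sequence one obtains from the definition of $\E_{\C}(-,X)$ really does have $\C(-,X_{\Injectives})$ as its middle term (which is clear from how the bifunctor was constructed), so that Proposition \ref{proposition.3} can be invoked.
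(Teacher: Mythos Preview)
Your proof is correct and follows essentially the same approach as the paper: both use the defining short exact sequence, invoke Proposition~\ref{proposition.3} to get that $\C(-,X_{\Injectives})$ is injective in $\mod \C$, and then use $\gldim(\mod \C)\leqslant 1$ to conclude. The only difference is cosmetic: the paper states the last step tersely (a quotient of an injective in a hereditary abelian category is injective), while you unpack it via the long exact $\Ext$-sequence.
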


\begin{proof}
Given $X \in \C$, consider the exact sequence \[ \begin{tikzcd}
0 \arrow[r] & {\C(-,X)} \arrow[r, "{\C(-,\eta_{X})}"] &[1.8em] {\C(-,X_{\Injectives})} \arrow[r, "{\psi_{-,X}}"] &[0.8em] {\E_{\C}(-,X)} \arrow[r] & 0
\end{tikzcd} \] in $\mod \C$. Since $X_{\Injectives}$ is injective and, by Proposition \ref{proposition.3}, $\C$ is right comprehensive, $\C(-,X_{\Injectives})$ is injective in $\mod \C$. Therefore, as $\gldim (\mod \C) \leqslant 1$, it follows that $\E_{\C}(-,X)$ is injective in $\mod \C$.
\end{proof}

By duality, Lemma \ref{lemma.2} also says that if $\C$ is $0$-abelian and has enough projectives, then $\E_{\C}(X,-)$ is injective in $\mod \C^{\op}$ for every $X \in \C$.

\begin{proposition}\label{proposition.21}
Let $\C$ be a $0$-abelian category with enough injectives. Every $F \in \mod \C$ admits an injective resolution in $\mod \C$ of the form \[ \begin{tikzcd}[ampersand replacement=\&]
0 \arrow[r] \& F \arrow[r] \& {\E_{\C}(-,X) \oplus \C(-,Z_{\Injectives})} \arrow[r, "{\left(\begin{smallmatrix}
        \E_{\C}(-,f) & 0 \\ 0 & \psi_{-,Z}
    \end{smallmatrix}\right)}"] \&[4.8em] {\E_{\C}(-,Y) \oplus \E_{\C}(-,Z)} \arrow[r] \& 0
\end{tikzcd} \] where $f \in \C(X,Y)$ is a bimorphism in $\C$ and $Z \in \C$ satisfies $F^{\ast} \simeq \C(Z,-)$.
\end{proposition}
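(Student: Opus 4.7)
The plan is to build the required resolution summand by summand, after first invoking the structural decomposition of finitely presented modules over a $0$-abelian category.

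First, I would apply Proposition \ref{proposition.13}(f) to write $F \simeq \mr(f) \oplus \C(-,Z)$ for some bimorphism $f \in \C(X,Y)$ and some $Z \in \C$. Applying $(-)^{\ast}$ to this decomposition, the Yoneda lemma gives $\C(-,Z)^{\ast} \simeq \C(Z,-)$, and Proposition \ref{proposition.6} yields $\mr(f)^{\ast} = 0$ because $f$ is, in particular, an epimorphism. Thus $F^{\ast} \simeq \C(Z,-)$, as the statement requires, and this $f$ and $Z$ are exactly the data that will appear in the resolution.

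Next, I would produce an injective resolution of each summand separately. For $\mr(f)$, I would invoke Theorem \ref{theorem.4} on the bimorphism $f$, which yields the not so long exact sequence $0 \to \C(-,X) \to \C(-,Y) \to \E_{\C}(-,X) \to \E_{\C}(-,Y) \to 0$ in $\mod \C$. Splicing at $\C(-,Y)$ identifies the left half as the defining projective presentation $\C(-,X) \to \C(-,Y) \to \mr(f) \to 0$ and extracts the right half as a short exact sequence $0 \to \mr(f) \to \E_{\C}(-,X) \to \E_{\C}(-,Y) \to 0$ whose right-hand differential is $\E_{\C}(-,f)$; by Lemma \ref{lemma.2} this is an injective resolution of $\mr(f)$. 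For $\C(-,Z)$, the defining sequence $0 \to \C(-,Z) \to \C(-,Z_{\Injectives}) \to \E_{\C}(-,Z) \to 0$ of $\E_{\C}(-,Z)$ is already an injective resolution: $\C(-,Z_{\Injectives})$ is injective in $\mod \C$ since $Z_{\Injectives}$ is injective in $\C$ and, by Proposition \ref{proposition.3}, $\C$ is right comprehensive, while $\E_{\C}(-,Z)$ is injective by Lemma \ref{lemma.2}; the differential is $\psi_{-,Z}$.

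Finally, I would take the direct sum of the two injective resolutions just constructed, which produces precisely the stated injective resolution of $F$ with the diagonal matrix as differential. I do not foresee any substantive obstacle; the only step that deserves a moment's care is the splicing argument for $\mr(f)$, but this follows directly from how the connecting map $\partial(-,f)$ is constructed in the proof of Theorem \ref{theorem.4} as the composition of the canonical epimorphism $\C(-,Y) \to \mr(f)$ with the induced monomorphism $\mr(f) \hookrightarrow \E_{\C}(-,X)$.
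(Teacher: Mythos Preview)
Your proposal is correct and follows essentially the same approach as the paper's proof: decompose $F$ via Proposition~\ref{proposition.13}(f), build the injective resolutions of $\mr(f)$ and $\C(-,Z)$ separately using Theorem~\ref{theorem.4} and the defining sequence of $\E_{\C}(-,Z)$ (with Lemma~\ref{lemma.2} and Proposition~\ref{proposition.3} supplying injectivity), then take their direct sum. The only cosmetic difference is that you verify $F^{\ast} \simeq \C(Z,-)$ up front rather than at the end.
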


\begin{proof}
Let $F \in \mod \C$ be arbitrary. It follows from Proposition \ref{proposition.13} that there is a bimorphism $f \in \C(X,Y)$ in $\C$ and an object $Z \in \C$ such that $F \simeq \mr(f) \oplus \C(-,Z)$. In this case, observe that, as it was explained in the proof of Lemma \ref{lemma.2}, \[ \begin{tikzcd}
0 \arrow[r] & {\C(-,Z)} \arrow[r, "{\C(-,\eta_{Z})}"] &[1.8em] {\C(-,Z_{\Injectives})} \arrow[r, "{\psi_{-,Z}}"] &[0.8em] {\E_{\C}(-,Z)} \arrow[r] & 0
\end{tikzcd} \] is an injective resolution of $\C(-,Z)$ in $\mod \C$. Moreover, by Theorem \ref{theorem.4}, there is an exact sequence \[ \begin{tikzcd}
0 \arrow[r] & \mr(f) \arrow[r] & {\E_{\C}(-,X)} \arrow[r, "{\E_{\C}(-,f)}"] &[1.6em] {\E_{\C}(-,Y)} \arrow[r] & 0
\end{tikzcd} \] in $\mod \C$, which is an injective resolution of $\mr(f)$, by Lemma \ref{lemma.2}. Therefore, by taking the direct sum of the above sequences, we obtain an injective resolution \[ \begin{tikzcd}[ampersand replacement=\&]
0 \arrow[r] \& F \arrow[r] \& {\E_{\C}(-,X) \oplus \C(-,Z_{\Injectives})} \arrow[r, "{\left(\begin{smallmatrix}
        \E_{\C}(-,f) & 0 \\ 0 & \psi_{-,Z}
    \end{smallmatrix}\right)}"] \&[4.8em] {\E_{\C}(-,Y) \oplus \E_{\C}(-,Z)} \arrow[r] \& 0
\end{tikzcd} \] of $F$ in $\mod \C$. Finally, since $f$ is an epimorphism, Proposition \ref{proposition.6} gives that $\mr(f)^{\ast} = 0$. Thus, as $\C(-,Z)^{\ast} \simeq \C(Z,-)$, we obtain that $F^{\ast} \simeq \C(Z,-)$.
\end{proof}

For the sake of completeness, let us mention the dual result of Proposition \ref{proposition.21}. Namely, if $\C$ is $0$-abelian and has enough projectives, then every $F \in \mod \C^{\op}$ admits an injective resolution \[ \begin{tikzcd}[ampersand replacement=\&]
0 \arrow[r] \& F \arrow[r] \& {\E_{\C}(Y,-) \oplus \C(Z_{\Projectives},-)} \arrow[r, "{\left(\begin{smallmatrix}
        \E_{\C}(f,-) & 0 \\ 0 & \omega_{Z,-}
    \end{smallmatrix}\right)}"] \&[4.8em] {\E_{\C}(X,-) \oplus \E_{\C}(Z,-)} \arrow[r] \& 0
\end{tikzcd} \] in $\mod \C^{\op}$, where $f \in \C(X,Y)$ is a bimorphism in $\C$ and $Z \in \C$ satisfies $F^{\ast} \simeq \C(-,Z)$.

It follows from the proof of Proposition \ref{proposition.21} that if $\C$ is $0$-abelian and has enough injectives, then every injective object in $\mod \C$ is of the form $\mr(f) \oplus \C(-,I)$ for some bimorphism $f \in \C(X,Y)$ in $\C$ such that $\E_{\C}(-,f)$ is a split epimorphism and for some injective object $I \in \C$. In particular, $\mr(f)$ is direct summand of $\E_{\C}(-,X)$. Thus, with the goal of classifying the injective objects in $\mod \C$, it becomes natural to ask whether a direct summand of $\E_{\C}(-,X)$ is also of the form $\E_{\C}(-,V)$ for some $V \in \C$.

The above question can be regarded as a $0$-abelian version of a conjecture attributed to Auslander, which originated in \cite[Section 4]{MR0212070}. It states that every direct summand of $\Ext_{\A}^{1}(-,A)$ is isomorphic to $\Ext_{\A}^{1}(-,B)$ for some $B \in \A$, whenever $\A$ is an abelian category with enough injectives and $A \in \A$ is arbitrary.\footnote{By considering opposite categories, we conclude that the dual version of this conjecture is that every direct summand of $\Ext_{\A}^{1}(A,-)$ is isomorphic to $\Ext_{\A}^{1}(B,-)$ for some $B \in \A$, whenever $\A$ is an abelian category with enough projectives and $A \in \A$ is arbitrary.} For the amusement of the reader, let us give a brief overview of this problem, see also \cite[Section 1]{MR3516080}. In the same paper, in \cite[Proposition 4.7]{MR0212070}, Auslander proved that the conjecture holds if $\A$ has finite global dimension, while Freyd proved the conjecture in \cite{MR206069} for the case when $\A$ \textit{has countable powers}, that is, when for every $A \in \A$, the product of countably many copies of $A$ exists in $\A$. However, shortly after these results were given, Auslander showed in \cite{MR237606} that the conjecture is not true in general. Still, it remains an open problem to characterize the categories $\A$ for which the conjecture holds. A more recent contribution to this problem was given by Martsinkovsky in \cite{MR3516080}, where he proved in \cite[Proposition 17]{MR3516080} that the conjecture is true when $\A$ is a \textit{length category}, that is, when the collection of subobjects of each object in $\A$ satisfies both the descending and ascending chain conditions.\footnote{Actually, the conjecture was proved for a larger class of categories, see \cite[Remark 4]{MR3516080}.}

For the $0$-abelian version of the above conjecture, the story is much shorter. Indeed, as we show next, it is just always true.

\begin{theorem}\label{theorem.5}
Let $\C$ be a $0$-abelian category with enough injectives. Given $X \in \C$, every direct summand of $\E_{\C}(-,X)$ in $\mod \C$ is isomorphic to $\E_{\C}(-,V)$ for some $V \in \C$. Furthermore, $V$ can be chosen in such a way that there is a bimorphism $h \in \C(X,V)$ for which $\E_{\C}(-,h)$ is a split epimorphism.\footnote{Our proof also shows that, under the assumptions of the theorem, if $F \in \eff \C$ and if there is an epimorphism $\E_{\C}(-,X) \to F$ in $\mod \C$, then $F \simeq \E_{\C}(-,V)$ for some $V \in \C$ such that $X_{\Injectives} \simeq V_{\Injectives}$.}
\end{theorem}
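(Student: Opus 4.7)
The plan is to lift the direct summand $F$ of $\E_{\C}(-,X)$ back to an object of $\C$ by exploiting the projective resolution $0 \to \C(-,X) \to \C(-,X_{\Injectives}) \to \E_{\C}(-,X) \to 0$ together with the assumption $\gldim(\mod \C) \leqslant 1$. Since $\E_{\C}(-,X) \simeq \mr(\eta_{X})$ with $\eta_{X}$ a bimorphism, Proposition \ref{proposition.6} yields that $\E_{\C}(-,X)$ is effaceable; effaceability is obviously preserved under direct summands, so $F$ is effaceable as well. Let $\pi : \E_{\C}(-,X) \to F$ denote the split epimorphism witnessing that $F$ is a summand, and let $K$ be the kernel of the composed epimorphism $\alpha : \C(-,X_{\Injectives}) \twoheadrightarrow \E_{\C}(-,X) \twoheadrightarrow F$. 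Since $\C(-,X_{\Injectives})$ is projective in $\mod \C$ and $\gldim(\mod \C) \leqslant 1$, $K$ is projective; by the idempotent completeness of $\C$, $K \simeq \C(-,V)$ for some $V \in \C$, and by the Yoneda lemma the inclusion $K \hookrightarrow \C(-,X_{\Injectives})$ corresponds to some $h \in \C(V, X_{\Injectives})$.

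The morphism $h$ is a monomorphism (because $\C(-,h)$ is one) and, since $F \simeq \mr(h)$ is effaceable, Proposition \ref{proposition.6} gives that $h$ is an epimorphism too. Thus $h$ is a bimorphism into the injective object $X_{\Injectives}$, and by the uniqueness (up to unique isomorphism) of such bimorphisms one may identify $V_{\Injectives}$ with $X_{\Injectives}$ and $\eta_{V}$ with $h$, which yields $F \simeq \E_{\C}(-,V)$. Note that this same argument also settles the footnote, as it only uses that $F$ is an effaceable quotient of $\E_{\C}(-,X)$ rather than a direct summand.

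For the \emph{furthermore} clause, the composition $\C(-,X) \to \C(-,X_{\Injectives}) \to F$ vanishes, so $\C(-,X) \hookrightarrow \C(-,X_{\Injectives})$ factors through $K = \C(-,V)$; by Yoneda this produces $h' \in \C(X,V)$ with $h h' = \eta_{X}$, and in particular $h'$ is a monomorphism. To show $h'$ is an epimorphism, apply the snake lemma to the commutative diagram whose rows are $0 \to \C(-,X) \to \C(-,X_{\Injectives}) \to \E_{\C}(-,X) \to 0$ and $0 \to \C(-,V) \to \C(-,X_{\Injectives}) \to F \to 0$, with vertical maps $\C(-,h')$, the identity, and $\pi$: the connecting map gives $\Coker \C(-,h') \simeq \Ker \pi$, which is a direct summand of the effaceable module $\E_{\C}(-,X)$, hence effaceable. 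Proposition \ref{proposition.6} then forces $h'$ to be an epimorphism, so $h'$ is a bimorphism. Finally, $h h' = \eta_{X}$ forces $h'_{\Injectives} = \mathrm{id}_{X_{\Injectives}}$, and comparing the snake-lemma diagram with the defining diagram of $\E_{\C}(-,h')$ identifies $\E_{\C}(-,h')$ with $\pi$ under $F \simeq \E_{\C}(-,V)$ (by uniqueness of the induced map on cokernels), so $\E_{\C}(-,h')$ is a split epimorphism. The step requiring the most care is this last identification: ensuring that the isomorphism $F \simeq \E_{\C}(-,V)$ truly transports $\pi$ to $\E_{\C}(-,h')$, rather than merely yielding some unspecified split epi between these two modules.
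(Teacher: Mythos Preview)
Your argument is correct and follows essentially the same route as the paper's proof: both take the kernel of the composite $\C(-,X_{\Injectives}) \twoheadrightarrow \E_{\C}(-,X) \twoheadrightarrow F$, recognize it as $\C(-,V)$ via $\gldim(\mod \C)\leqslant 1$, show the resulting map $V \to X_{\Injectives}$ is a bimorphism using effaceability of $F$, factor $\eta_X$ through $V$ to obtain the bimorphism $X \to V$, and then identify the induced map on cokernels with the original split epimorphism. The only cosmetic differences are that the paper names your $h$ and $h'$ as $g$ and $h$ respectively, and it handles the identification $V_{\Injectives} \simeq X_{\Injectives}$ via Lemma~\ref{lemma.1} (i.e., $g_{\Injectives}$ is an isomorphism) rather than by directly invoking uniqueness of bimorphisms into injectives; the care you flag at the end about transporting $\pi$ to $\E_{\C}(-,h')$ is exactly what the paper resolves by the uniqueness clause in the definition of $\E_{\C}(-,f)$.
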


\begin{proof}
Let $F \in \mod \C$ be a direct summand of $\E_{\C}(-,X)$, so that there is a split epimorphism $\alpha \in \Hom(\E_{\C}(-,X),F)$. Since there is an exact sequence \[ \begin{tikzcd}
0 \arrow[r] & {\C(-,X)} \arrow[r, "{\C(-,\eta_{X})}"] &[1.8em] {\C(-,X_{\Injectives})} \arrow[r, "{\psi_{-,X}}"] &[0.8em] {\E_{\C}(-,X)} \arrow[r] & 0
\end{tikzcd} \] in $\mod \C$ and $\eta_{X}$ is an epimorphism, it follows from Proposition \ref{proposition.6} that $\E_{\C}(-,X)^{\ast} = 0$. Thus, given that $F \oplus \Ker \alpha \simeq \E_{\C}(-,X)$, we obtain that $F^{\ast} = 0$ and $(\Ker \alpha)^{\ast} = 0$. Next, note that, as $\gldim (\mod \C) \leqslant 1$, there is a commutative diagram with exact rows \[ \begin{tikzcd}
0 \arrow[r] & {\C(-,X)} \arrow[r, "{\C(-,\eta_{X})}"] \arrow[d, "{\C(-,h)}"'] &[1.8em] {\C(-,X_{\Injectives})} \arrow[r, "{\psi_{-,X}}"] \arrow[d, equal] &[0.8em] {\E_{\C}(-,X)} \arrow[r] \arrow[d, "\alpha"] & 0 \\
0 \arrow[r] & {\C(-,V)} \arrow[r, "{\C(-,g)}"']                               & {\C(-,X_{\Injectives})} \arrow[r, "{\alpha \psi_{-,X}}"']          & F \arrow[r]                                  & 0
\end{tikzcd} \] in $\mod \C$ for some morphisms $g \in \C(V,X_{\Injectives})$ and $h \in \C(X,V)$ in $\C$. In this case, $g$ is a monomorphism for which $F \simeq \mr(g)$ and, by the snake lemma or the circular sequence, $h$ is a monomorphism such that $\Ker \alpha \simeq \mr(h)$. Consequently, by Proposition \ref{proposition.6}, $g$ and $h$ are epimorphisms, and hence bimorphisms. In particular, it follows from Lemma \ref{lemma.1} that $g_{\Injectives}$ is an isomorphism. Hence, from $g_{\Injectives} \eta_{V} = g$, we get that $\C(-,\eta_{V}) = \C(-,g_{\Injectives}^{-1}) \C(-,g)$, so that there is a unique morphism $\beta \in \Hom(F, \E_{\C}(-,V))$ making the diagram below with exact rows in $\mod \C$ commute. \[ \begin{tikzcd}
0 \arrow[r] & {\C(-,V)} \arrow[r, "{\C(-,g)}"] \arrow[d, equal] &[1.8em] {\C(-,X_{\Injectives})} \arrow[r, "{\alpha \psi_{-,X}}"] \arrow[d, "{\C(-,g_{\Injectives}^{-1})}"] &[0.8em] F \arrow[r] \arrow[d, "\beta"] & 0 \\
0 \arrow[r] & {\C(-,V)} \arrow[r, "{\C(-,\eta_{V})}"']          & {\C(-,V_{\Injectives})} \arrow[r, "{\psi_{-,V}}"']                                                 & {\E_{\C}(-,V)} \arrow[r]       & 0
\end{tikzcd} \] But as $g_{\Injectives}^{-1}$ is an isomorphism, so is $\C(-,g_{\Injectives}^{-1})$, hence it follows from the snake lemma or the circular sequence or the five lemma that $\beta$ is an isomorphism.

Finally, observe that, by ``composing'' the two diagrams above, we obtain a commutative diagram with exact rows \[ \begin{tikzcd}
0 \arrow[r] & {\C(-,X)} \arrow[r, "{\C(-,\eta_{X})}"] \arrow[d, "{\C(-,h)}"'] &[1.8em] {\C(-,X_{\Injectives})} \arrow[r, "{\psi_{-,X}}"] \arrow[d, "{\C(-,g_{\Injectives}^{-1})}"] &[0.8em] {\E_{\C}(-,X)} \arrow[r] \arrow[d, "\beta \alpha"] & 0 \\
0 \arrow[r] & {\C(-,V)} \arrow[r, "{\C(-,\eta_{V})}"']                        & {\C(-,V_{\Injectives})} \arrow[r, "{\psi_{-,V}}"']                                          & {\E_{\C}(-,V)} \arrow[r]                           & 0
\end{tikzcd} \] in $\mod \C$. By the uniqueness of the morphisms involved in this diagram, we conclude that $g_{\Injectives}^{-1} = h_{\Injectives}$ and $\beta \alpha = \E_{\C}(-,h)$. Therefore, $\E_{\C}(-,h)$ is a split epimorphism.
\end{proof}

By duality, Theorem \ref{theorem.5} also states that if $\C$ is $0$-abelian and has enough projectives, then, for $X \in \C$, every direct summand of $\E_{\C}(X,-)$ in $\mod \C^{\op}$ is isomorphic to $\E_{\C}(V,-)$ for some $V \in \C$. Moreover, such $V$ can be chosen in a way that there is a bimorphism $h \in \C(V,X)$ for which $\E_{\C}(h,-)$ is a split epimorphism.

We can now reach a full classification of the injective objects in $\mod \C$ when $\C$ is a $0$-abelian category with enough injectives. Indeed, this is given by the next corollary, since every object in $\mod \C$ is the direct sum of a projective and an effaceable object in $\mod \C$ when $\C$ is $0$-abelian, see Proposition \ref{proposition.13}.

\begin{corollary}\label{corollary.3}
Let $\C$ be a $0$-abelian category with enough injectives.
\begin{enumerate}
    \item[(a)] The objects in $\mod \C$ that are injective and projective are (up to isomorphism) given by $\C(-,I)$, where $I \in \C$ is injective in $\C$.
    \item[(b)] The objects in $\mod \C$ that are injective and effaceable are (up to isomorphism) given by $\E_{\C}(-,X)$, where $X \in \C$ is arbitrary.
\end{enumerate}
\end{corollary}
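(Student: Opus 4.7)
The plan is to verify each item by checking both directions, using the machinery already developed, with Theorem \ref{theorem.5} doing the heavy lifting in (b).

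For part (a), in one direction, if $I \in \C$ is injective, then $\C(-,I)$ is projective by Yoneda's lemma and injective because $\C$ is right comprehensive (Proposition \ref{proposition.3}). Conversely, suppose $F \in \mod \C$ is both projective and injective. As a finitely generated projective object, $F \simeq \C(-,I)$ for some $I \in \C$, so it remains to show that $I$ is injective in $\C$. Given any monomorphism $f \in \C(I,W)$, the induced morphism $\C(-,f) \in \Hom(\C(-,I),\C(-,W))$ is a monomorphism in $\mod \C$; by injectivity of $\C(-,I)$ it splits, so by the Yoneda lemma $f$ itself is a split monomorphism, proving that $I$ is injective.

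For part (b), first note that $\E_{\C}(-,X)$ is injective in $\mod \C$ by Lemma \ref{lemma.2}. Moreover, from the defining exact sequence $\C(-,\eta_{X}) \to \C(-,X_{\Injectives}) \to \E_{\C}(-,X) \to 0$, we see that $\E_{\C}(-,X) \simeq \mr(\eta_{X})$; since $\eta_{X}$ is a bimorphism, in particular an epimorphism, Proposition \ref{proposition.6} yields that $\E_{\C}(-,X) \in \eff \C$.

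For the converse in (b), suppose $F \in \mod \C$ is injective and effaceable. By Proposition \ref{proposition.13}, there is a bimorphism $f \in \C(X,Y)$ and an object $Z \in \C$ with $F \simeq \mr(f) \oplus \C(-,Z)$. Applying $(-)^{\ast}$ gives $0 = F^{\ast} \simeq \mr(f)^{\ast} \oplus \C(Z,-)$, so $\C(Z,-) = 0$, forcing $Z \simeq 0$ and hence $F \simeq \mr(f)$. Now apply Theorem \ref{theorem.4} to the bimorphism $f$: the not so long exact sequence gives $\mr(f) \simeq \Coker \C(-,f) \simeq \Ker \E_{\C}(-,f)$, which yields a short exact sequence
\[ \begin{tikzcd}
0 \arrow[r] & F \arrow[r] & {\E_{\C}(-,X)} \arrow[r, "{\E_{\C}(-,f)}"] &[1.6em] {\E_{\C}(-,Y)} \arrow[r] & 0
\end{tikzcd} \]
in $\mod \C$. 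Since $F$ is injective, this sequence splits, exhibiting $F$ as a direct summand of $\E_{\C}(-,X)$. Theorem \ref{theorem.5} then delivers $V \in \C$ with $F \simeq \E_{\C}(-,V)$, completing the proof.

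There is no serious obstacle here: the argument is essentially a matter of correctly bookkeeping the not so long exact sequence to realize $F$ as a direct summand of some $\E_{\C}(-,X)$ and then invoking the $0$-abelian version of Auslander's conjecture (Theorem \ref{theorem.5}). The only mild subtlety is ensuring that the effaceability hypothesis kills the projective summand $\C(-,Z)$ in the decomposition from Proposition \ref{proposition.13}, which follows immediately from the identification $\C(-,Z)^{\ast} \simeq \C(Z,-)$.
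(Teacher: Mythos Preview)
Your proof is correct and follows essentially the same approach as the paper: both directions of (a) and the forward direction of (b) match closely, and for the converse in (b) you reach the same short exact sequence via Theorem~\ref{theorem.4} and then invoke Theorem~\ref{theorem.5} exactly as the paper does. The only cosmetic difference is that the paper obtains $F \simeq \mr(f)$ for a bimorphism $f$ by first using $\gldim(\mod \C) \leqslant 1$ to get a monomorphism $f$ and then Proposition~\ref{proposition.6} to upgrade it to a bimorphism, whereas you use Proposition~\ref{proposition.13} and kill the projective summand via effaceability---both routes are equally short and valid.
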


\begin{proof}
It follows from \cite[Proposition 7.1]{2409.10438} that every object in $\mod \C$ that is injective and projective is isomorphic to $\C(-,I)$ for some injective object $I \in \C$. Moreover, all the objects in $\mod \C$ of this form are projective and injective since $\C$ is right comprehensive, by Proposition \ref{proposition.3}. Hence item (a) holds. Below, we prove item (b).

Suppose that $F \in \mod \C$ is injective and effaceable. Since $\gldim (\mod \C) \leqslant 1$, there is a monomorphism $f \in \C(X,Y)$ in $\C$ for which $F \simeq \mr(f)$. Then, by Proposition \ref{proposition.6}, $f$ is a bimorphism, and Theorem \ref{theorem.4} gives a short exact sequence \[ \begin{tikzcd}
0 \arrow[r] & F \arrow[r] & {\E_{\C}(-,X)} \arrow[r, "{\E_{\C}(-,f)}"] &[1.6em] {\E_{\C}(-,Y)} \arrow[r] & 0
\end{tikzcd} \] in $\mod \C$, which splits, as $F$ is injective. Thus, we conclude from Theorem \ref{theorem.5} that $F \simeq \E_{\C}(-,V)$ for some $V \in \C$. Also, it follows from their definition, Proposition \ref{proposition.6} and Lemma \ref{lemma.2} that $\E_{\C}(-,V)$ is effaceable and injective in $\mod \C$ for every $V \in \C$.
\end{proof}

By taking $\C^{\op}$ in place of $\C$ in Corollary \ref{corollary.3}, we conclude that if $\C$ is $0$-abelian and has enough projectives, then the projective injective objects in $\mod \C^{\op}$ are given by $\C(P,-)$, where $P \in \C$ is projective in $\C$, while the effaceable injective objects in $\mod \C^{\op}$ are given by $\E_{\C}(X,-)$ for any $X \in \C$. Due to the dual result of Proposition \ref{proposition.13}, these facts classify the injective objects in $\mod \C^{\op}$ when $\C$ is a $0$-abelian category with enough projectives.

\subsection{Their stable categories}

Following \cite[Subsection 2.3]{2409.10438}, let $\langle \Injectives \rangle$ and $\langle \Projectives \rangle$ be the ideals of $\C$ consisting of the morphisms in $\C$ that factor through $\Injectives$ and $\Projectives$, respectively. Recall that the \textit{injectively stable category} of $\C$ is the quotient $\C / \langle \Injectives \rangle$, while the \textit{projectively stable category} of $\C$ is given by $\C / \langle \Projectives \rangle$, which we denote simply by $\overline{\C}$ and $\underline{\C}$, respectively. In this subsection, we investigate under what conditions $\overline{\C}$ and $\underline{\C}$ are $n$-abelian categories, assuming that $\C$ is a $0$-abelian category with, respectively, enough injectives and enough projectives. Our main tool to study this problem will be, under the previous respective assumptions on $\C$, an embedding of $\overline{\C}$ in $\mod \C$ and of $\underline{\C}$ in $\mod \C^{\op}$. These embeddings are provided by a $0$-abelian version of the Hilton-Rees theorem, which we present in Theorem \ref{theorem.6}. Furthermore, we give a couple of results on the existence of $n$-kernels and $n$-cokernels on the stable categories of an abelian category with enough injectives or projectives, which are motivated by our discussion on $0$-abelian categories.

Let us start by giving a $0$-abelian version of the Hilton-Rees theorem, a classical result for abelian categories proved in \cite{MR124377}, see \cite[Section 4]{MR3516080}.

\begin{theorem}\label{theorem.6}
Let $\C$ be a $0$-abelian category with enough injectives. For each $X,Y \in \C$ there is an isomorphism of abelian groups \[ \overline{\C}(X,Y) \to \Hom(\E_{\C}(-,X), \E_{\C}(-,Y)) \] given by $\overline{f} \mapsto \E_{\C}(-,f)$, which is natural in both $X$ and $Y$. These isomorphisms define a fully faithful additive functor $\overline{\C} \to \mod \C$.
\end{theorem}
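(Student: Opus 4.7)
The plan is to consider the homomorphism of abelian groups
\[ \Phi_{X,Y} \colon \C(X,Y) \longrightarrow \Hom(\E_{\C}(-,X), \E_{\C}(-,Y)), \qquad f \mapsto \E_{\C}(-,f), \]
which is additive by the functoriality of $\E_{\C}(-,-)$. First I would show that $\Phi_{X,Y}$ vanishes on morphisms factoring through an injective object of $\C$, so that it descends to a map out of $\overline{\C}(X,Y)$; then I would establish that the descended map is an isomorphism by identifying its kernel and proving its surjectivity. Naturality in $X$ and $Y$ will follow immediately from the bifunctoriality of $\E_{\C}(-,-)$, so the final claim about the existence of a fully faithful additive functor $\overline{\C} \to \mod \C$ will be a formality.

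For the kernel computation, if $f = vu$ with $u \in \C(X,I)$, $v \in \C(I,Y)$ and $I$ injective in $\C$, then $\E_{\C}(-,f) = \E_{\C}(-,v) \E_{\C}(-,u)$ factors through $\E_{\C}(-,I) = 0$ by Proposition \ref{proposition.19}. Conversely, suppose $\E_{\C}(-,f) = 0$. Then the commutative diagram defining $\E_{\C}(-,f)$ yields $\psi_{-,Y} \C(-,f_{\Injectives}) = 0$, so $\C(-,f_{\Injectives})$ factors through the kernel $\C(-,\eta_{Y})$ of $\psi_{-,Y}$, and by the Yoneda lemma there exists $g \in \C(X_{\Injectives},Y)$ with $f_{\Injectives} = \eta_{Y} g$. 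Precomposing with $\eta_{X}$ and using the naturality identity $f_{\Injectives} \eta_{X} = \eta_{Y} f$ gives $\eta_{Y} f = \eta_{Y} g \eta_{X}$; since $\eta_{Y}$ is a monomorphism, this forces $f = g \eta_{X}$, exhibiting $f$ as a morphism that factors through the injective object $X_{\Injectives}$.

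For surjectivity, let $\beta \in \Hom(\E_{\C}(-,X), \E_{\C}(-,Y))$. Since $\C(-,X_{\Injectives})$ is projective in $\mod \C$ and $\psi_{-,Y}$ is an epimorphism, the morphism $\beta \psi_{-,X}$ lifts to some $\delta \in \Hom(\C(-,X_{\Injectives}), \C(-,Y_{\Injectives}))$ with $\psi_{-,Y} \delta = \beta \psi_{-,X}$. Composing with $\C(-,\eta_{X})$ and using that $\psi_{-,X} \C(-,\eta_{X}) = 0$ shows that $\delta \C(-,\eta_{X})$ factors through $\C(-,\eta_{Y})$; via the Yoneda lemma this produces $f \in \C(X,Y)$ and $h \in \C(X_{\Injectives}, Y_{\Injectives})$ with $\delta = \C(-,h)$ and $h \eta_{X} = \eta_{Y} f$. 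Since $f_{\Injectives}$ also satisfies $f_{\Injectives} \eta_{X} = \eta_{Y} f$, and $\eta_{X}$ is a bimorphism (hence an epimorphism), right-cancellation forces $h = f_{\Injectives}$. Therefore $\delta = \C(-,f_{\Injectives})$, and the defining diagram of $\E_{\C}(-,f)$ gives $\E_{\C}(-,f) \psi_{-,X} = \psi_{-,Y} \C(-,f_{\Injectives}) = \beta \psi_{-,X}$; cancelling the epimorphism $\psi_{-,X}$ yields $\E_{\C}(-,f) = \beta$.

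The step I expect to be the main obstacle, and the one that genuinely uses the $0$-abelian hypothesis rather than merely the existence of enough injectives, is the identification $h = f_{\Injectives}$ in the surjectivity argument. It relies on $\eta_{X}$ being simultaneously a monomorphism, which is what pins down $f_{\Injectives}$ via its universal property, and an epimorphism, which is what allows right-cancellation of $\eta_{X}$; both features are available precisely because $\eta_{X}$ is a bimorphism in a $0$-abelian category. Everything else, including the reduction to $\overline{\C}$, the naturality in both variables, and the promotion of $\Phi_{X,Y}$ to a fully faithful additive functor $\overline{\C} \to \mod \C$, is then a routine consequence of the bijectivity of $\Phi_{X,Y}$ and the functoriality of $\E_{\C}(-,-)$.
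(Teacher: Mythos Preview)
Your proof is correct and follows essentially the same route as the paper's: both establish fullness by lifting a given morphism between the $\E_{\C}(-,\cdot)$'s to a morphism between the middle terms of the defining short exact sequences and identifying that lift with $\C(-,f_{\Injectives})$, and both identify the kernel of the resulting functor $\C \to \mod \C$ with $\langle \Injectives \rangle$. The only noteworthy difference is that the step you flag as the main obstacle --- showing $h = f_{\Injectives}$ --- is handled in the paper in one line via the uniqueness clause of the universal property of the reflection $\eta_{X}$ (which needs only that $\Injectives$ is reflective in $\C$, not that $\eta_{X}$ is an epimorphism), so it is less of a bottleneck than you suggest; the $0$-abelian hypothesis is really doing its work earlier, in guaranteeing that $\eta$ exists and has bimorphism components.
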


\begin{proof}
As we already pointed out when we defined $\E_{\C}(-,-)$ in Subsection \ref{subsection.4.2}, the assignments of an object $X \in \C$ to $\E_{\C}(-,X)$ and of a morphism $f$ in $\C$ to $\E_{\C}(-,f)$ define an additive functor $\C \to \mod \C$, which is easily seen to be full. In fact, let $X,Y \in \C$ be arbitrary, and take $\alpha \in \Hom(\E_{\C}(-,X), \E_{\C}(-,Y))$. Then there are morphisms $g \in \C(X_{\Injectives},Y_{\Injectives})$ and $f \in \C(X,Y)$ for which the diagram below with exact rows in $\mod \C$ commutes. \[ \begin{tikzcd}
0 \arrow[r] & {\C(-,X)} \arrow[r, "{\C(-,\eta_{X})}"] \arrow[d, "{\C(-,f)}"'] &[1.8em] {\C(-,X_{\Injectives})} \arrow[r, "{\psi_{-,X}}"] \arrow[d, "{\C(-,g)}"] &[0.8em] {\E_{\C}(-,X)} \arrow[r] \arrow[d, "\alpha"] & 0 \\
0 \arrow[r] & {\C(-,Y)} \arrow[r, "{\C(-,\eta_{Y})}"']                        & {\C(-,Y_{\Injectives})} \arrow[r, "{\psi_{-,Y}}"']                                          & {\E_{\C}(-,Y)} \arrow[r]                           & 0
\end{tikzcd} \] However, from the uniqueness of $f_{\Injectives}$, we get $g = f_{\Injectives}$, which implies that $\alpha = \E_{\C}(-,f)$. Now, since the kernel $\mathsf{K}$ of the functor $\C \to \mod \C$ induces a fully faithful additive functor $\C / \mathsf{K} \to \mod \C$, it suffices to prove that $\mathsf{K} = \langle \Injectives \rangle$ to conclude the proof of the theorem. So, let us show that $\mathsf{K} = \langle \Injectives \rangle$.

Fixed $X, Y \in \C$, we must verify that $\mathsf{K}(X,Y) = \langle \Injectives \rangle (X,Y)$, that is, that for a given $f \in \C(X,Y)$, we have $\E_{\C}(-,f) = 0$ if and only if $f$ factors through an injective object in $\C$. But note that, following the previous diagram, $\E_{\C}(-,f) = 0$ if and only if $\C(-,f_{\Injectives})$ factors through $\C(-,\eta_{Y})$, which happens if and only if $f_{\Injectives}$ factors through  $\eta_{Y}$. Thus, as $f_{\Injectives}$ factors through $\eta_{Y}$ if and only if $f$ factors through $\eta_{X}$, which occurs if and only if $f$ factors through some injective object in $\C$, we are done.
\end{proof}

By replacing $\C$ by $\C^{\op}$ in Theorem \ref{theorem.6}, we also conclude that if $\C$ is a $0$-abelian category with enough projectives, then there is a fully faithful functor $\underline{\C} \to \mod \C^{\op}$ which induces, for each $X, Y \in \C$, an isomorphism of abelian groups \[ \underline{\C}(X,Y) \to \Hom(\E_{\C}(Y,-),\E_{\C}(X,-)) \] given by $\underline{f} \mapsto \E_{\C}(f,-)$, which is natural in both $X$ and $Y$.

The embedding of $\overline{\C}$ into $\mod \C$ provided by Theorem \ref{theorem.6} when $\C$ is a $0$-abelian category with enough injectives turns out to be very useful. Indeed, it allows us to investigate properties of $\overline{\C}$ via the category $\mod \C$, which, by now, we have a better understanding of. For instance, we can use this embedding to prove the following:

\begin{proposition}\label{proposition.20}
Let $\C$ be a $0$-abelian category with enough injectives. Then $\overline{\C}$ is additive, idempotent complete, and has $0$-cokernels.
\end{proposition}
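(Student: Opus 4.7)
The plan is to exploit the fully faithful embedding $\overline{\C} \hookrightarrow \mod \C$ provided by Theorem \ref{theorem.6}, which allows us to reduce questions about $\overline{\C}$ to questions about the abelian category $\mod \C$. Recall that $\C$ is coherent and $\gldim(\mod \C) \leqslant 1$, so $\mod \C$ is an abelian, hence idempotent complete, category of global dimension at most one.

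First I would dispatch additivity: the quotient of the additive category $\C$ by the ideal $\langle \Injectives \rangle$ inherits an additive structure by the standard construction (the hom-sets are quotient abelian groups, biproducts are inherited from $\C$). Next, for idempotent completeness, given an idempotent $\overline{e} \in \overline{\C}(X,X)$, its image $\E_{\C}(-,e)$ is an idempotent of $\E_{\C}(-,X)$ in $\mod \C$, which splits through some direct summand $F$ of $\E_{\C}(-,X)$. By Theorem \ref{theorem.5}, $F \simeq \E_{\C}(-,V)$ for some $V \in \C$, and by the fullness and faithfulness of the embedding, the splitting morphisms in $\mod \C$ lift uniquely to morphisms in $\overline{\C}$ that split $\overline{e}$ through $V$.

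For $0$-cokernels, given $\overline{f} \in \overline{\C}(X,Y)$, I would factor $\E_{\C}(-,f) = i \cdot p$ in $\mod \C$ as an epimorphism $p$ followed by a monomorphism $i$, through the image object $F$. The key point is that $F$ is injective in $\mod \C$: by Lemma \ref{lemma.2}, $\E_{\C}(-,X)$ is injective, $F$ is a quotient of it, and $\gldim(\mod \C) \leqslant 1$ together with a dimension shift forces $\Ext^{1}(-,F) = 0$, so $F$ is injective. Consequently the epimorphism $p$ splits, exhibiting $F$ as a direct summand of $\E_{\C}(-,X)$; by Theorem \ref{theorem.5}, $F \simeq \E_{\C}(-,V)$ for some $V \in \C$. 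Fully faithfulness of the embedding then yields morphisms $s \in \C(X,V)$ and $r \in \C(V,Y)$ with $p = \E_{\C}(-,s)$ and $i = \E_{\C}(-,r)$, and $\overline{f} = \overline{r} \cdot \overline{s}$ in $\overline{\C}$. Since $p$ is an epimorphism in $\mod \C$, standard fully-faithfulness considerations imply $\overline{s}$ is an epimorphism in $\overline{\C}$; since $i$ is a split monomorphism in $\mod \C$ with retraction $q$, fullness lifts $q$ to $\overline{\C}$, yielding a retraction of $\overline{r}$. This exhibits $\overline{r} \cdot \overline{s}$ as a $0$-cokernel decomposition of $\overline{f}$.

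The main point requiring care is the injectivity of the image $F$, because this is what makes Theorem \ref{theorem.5} applicable and also what makes the monomorphism $i$ split. Once this is secured, the rest is essentially a transport of structure along the fully faithful embedding, and the arguments for preservation of epimorphisms and split monomorphisms under fully faithful additive functors are formal.
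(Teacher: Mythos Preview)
Your approach is essentially the same as the paper's, but there is a slip in the argument for $0$-cokernels. From $F$ being injective you conclude that ``the epimorphism $p$ splits, exhibiting $F$ as a direct summand of $\E_{\C}(-,X)$'', but this does not follow: injectivity of $F$ says nothing about epimorphisms onto $F$. What \emph{does} follow is that the monomorphism $i : F \to \E_{\C}(-,Y)$ splits (any monomorphism out of an injective object is split), so $F$ is a direct summand of $\E_{\C}(-,Y)$, not of $\E_{\C}(-,X)$, and Theorem~\ref{theorem.5} should be applied there. You seem to know this---later you use that $i$ is a split monomorphism, and in your closing paragraph you even say injectivity of $F$ is ``what makes the monomorphism $i$ split''---so this looks like a momentary confusion rather than a conceptual error. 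Once the sentence about $p$ splitting is replaced by the correct observation about $i$, the proof is identical to the paper's.
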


\begin{proof}
It is clear that $\overline{\C}$ is additive, and it follows from Theorems \ref{theorem.5} and \ref{theorem.6} that $\overline{\C}$ is idempotent complete. While we leave the details of these claims to the reader, we show below that $\overline{\C}$ has $0$-cokernels.\footnote{Nevertheless, observe that the existence of $0$-cokernels in $\overline{\C}$ also implies idempotent completeness for $\overline{\C}$, by Proposition \ref{proposition.2}.}

Let $\overline{f} \in \overline{\C}(X,Y)$ be an arbitrary morphism in $\overline{\C}$, where $f \in \C(X,Y)$ is one of its representatives. Then take a factorization $\E_{\C}(-,f) = \beta \alpha$ in $\mod \C$, where $\alpha \in \Hom(\E_{\C}(-,X),F)$ is an epimorphism and $\beta \in \Hom(F, \E_{\C}(-,Y))$ is a monomorphism. Since $\gldim (\mod \C) \leqslant 1$ and we know from Lemma \ref{lemma.2} that $\E_{\C}(-,X)$ is injective in $\mod \C$, we conclude that so is $F$. Therefore, $\beta$ is a split monomorphism, so that $F$ is a direct summand of $\E_{\C}(-,Y)$, which implies that $F$ is isomorphic to $\E_{\C}(-,V)$ for some $V \in \C$, by Theorem \ref{theorem.5}. Thus, using Theorem \ref{theorem.6}, we can conclude that the factorization $\E_{\C}(-,f) = \beta \alpha$ induces a decomposition $\E_{\C}(-,f) = \E_{\C}(-,r) \E_{\C}(-,s)$ in $\mod \C$, where $s \in \C(X,V)$ and $r \in \C(V,Y)$ are such that $\E_{\C}(-,s)$ is an epimorphism and $\E_{\C}(-,r)$ is a split monomorphism in $\mod \C$. In particular, $\E_{\C}(-,s)$ and $\E_{\C}(-,r)$ are, respectively, an epimorphism and a split monomorphism in the subcategory of $\mod \C$ consisting of the objects $\E_{\C}(-,Z)$ with $Z \in \C$. Since this subcategory is isomorphic to $\overline{\C}$ via the functor described in Theorem \ref{theorem.6}, we conclude that $\overline{f} = \overline{r} \hspace{0.15em} \overline{s}$, where $\overline{s}$ and $\overline{r}$ are, respectively, an epimorphism and a split monomorphism in $\overline{\C}$. Hence $\overline{s}$ is a $0$-cokernel of $\overline{f}$ in $\overline{\C}$.
\end{proof}

\begin{corollary}\label{corollary.4}
Let $\C$ be a $0$-abelian category with enough injectives. The following are equivalent:
\begin{enumerate}
    \item[(a)] $\overline{\C}$ is $0$-abelian.
    \item[(b)] $\overline{\C}$ is $n$-abelian for some nonnegative integer $n$.
    \item[(c)] $\overline{\C}$ has weak kernels.
\end{enumerate}
\end{corollary}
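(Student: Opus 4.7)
The plan is to prove the cycle (a) $\Rightarrow$ (b) $\Rightarrow$ (c) $\Rightarrow$ (a), where the first two implications are essentially formal and the content lies in the last. The implication (a) $\Rightarrow$ (b) is immediate by taking $n = 0$. For (b) $\Rightarrow$ (c), I would invoke the fact that any $n$-abelian category, whether $n = 0$ or $n \geqslant 1$, is right coherent by axiom ($n$F1) (see Theorem \ref{theorem.1} and the definition in Subsection \ref{subsection.2.1}), and right coherence is equivalent to the existence of weak kernels by \cite[Proposition 3.1]{2409.10438}.

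The substantive implication is (c) $\Rightarrow$ (a). My strategy is to combine the weak kernels supplied by (c) with the $0$-cokernels produced by Proposition \ref{proposition.20} to first construct kernels that are split monomorphisms, and then use these to extract $0$-kernel factorizations, after which Theorem \ref{theorem.3} will apply. In detail, given $\overline{f} \in \overline{\C}(X,Y)$, I would pick a weak kernel $\overline{w} \in \overline{\C}(W,X)$ of $\overline{f}$ and factor it via its $0$-cokernel as $\overline{w} = \overline{k}\,\overline{p}$, with $\overline{p} \in \overline{\C}(W,K)$ an epimorphism and $\overline{k} \in \overline{\C}(K,X)$ a split monomorphism. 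A short diagram chase then shows $\overline{k}$ is a kernel of $\overline{f}$: the equation $\overline{f}\,\overline{k} = 0$ follows from $\overline{f}\,\overline{w} = 0$ and $\overline{p}$ epi, the universal property is inherited from the weak kernel $\overline{w}$, and uniqueness of factorizations is automatic since $\overline{k}$ is a monomorphism. Now, as $\overline{k}$ is a split monomorphism and $\overline{\C}$ is idempotent complete by Proposition \ref{proposition.20}, there is a decomposition $X \simeq K \oplus Z$ with $\overline{k}$ the inclusion of $K$ and $\overline{\pi} \in \overline{\C}(X,Z)$ the projection (a split epimorphism). Because $\overline{f}$ annihilates $K$, it factors uniquely as $\overline{f} = \overline{g}\,\overline{\pi}$ for some $\overline{g} \in \overline{\C}(Z,Y)$.

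The main obstacle — the step I expect to require the most care — is verifying that $\overline{g}$ is a monomorphism, so that $\overline{f} = \overline{g}\,\overline{\pi}$ is indeed a $0$-kernel factorization. I would argue that any $\overline{\alpha} \in \overline{\C}(A,Z)$ with $\overline{g}\,\overline{\alpha} = 0$, when composed with the inclusion $Z \hookrightarrow X$, becomes annihilated by $\overline{f}$ and therefore factors uniquely through the kernel $\overline{k}$; projecting this factorization back to $Z$ then forces $\overline{\alpha} = 0$. Once this is established, $\overline{f}$ decomposes as a split epimorphism followed by a monomorphism, so $\overline{\C}$ has $0$-kernels. Combined with the $0$-cokernels of Proposition \ref{proposition.20}, Theorem \ref{theorem.3} then yields that $\overline{\C}$ is $0$-abelian.
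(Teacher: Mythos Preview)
Your proof is correct, and the implication (c) $\Rightarrow$ (a) follows a genuinely different path from the paper's. The paper argues functorially: from Proposition \ref{proposition.20} it knows $\overline{\C}$ has $0$-cokernels, so by Proposition \ref{proposition.1} the axiom ($0$F1$^{\op}$) holds; weak kernels then give right coherence, and the paper invokes \cite[Theorem A.5]{2409.10438} (equality of the global dimensions of $\mod \overline{\C}$ and $\mod \overline{\C}^{\hspace{0.1em}\op}$ for coherent $\overline{\C}$) to transfer the bound $\gldim\big(\mod \overline{\C}^{\hspace{0.1em}\op}\big) \leqslant 1$ over to $\mod \overline{\C}$, yielding ($0$F1). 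Your argument instead stays entirely inside $\overline{\C}$: you manufacture a genuine kernel from a weak kernel by pushing it through a $0$-cokernel, then use the resulting split-monomorphic kernel together with idempotent completeness to extract a $0$-kernel factorization of the original morphism. In effect you are proving the general elementary fact that an additive idempotent complete category with $0$-cokernels and weak kernels already has $0$-kernels, bypassing the appeal to \cite[Theorem A.5]{2409.10438}. The paper's route is shorter given the machinery in place; yours is more self-contained and makes the implication independent of the nontrivial comparison of left and right global dimensions.
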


\begin{proof}
Trivially, (a) implies (b). Moreover, it is clear that a preadditive category with $n$-kernels has weak kernels when $n$ is positive, and the same claim holds for $n = 0$, as we showed in the proof of Proposition \ref{proposition.1}. Hence (b) implies (c). Below, we prove that (c) implies (a).

Given Proposition \ref{proposition.20}, we know that $\overline{\C}$ is additive, idempotent complete, and has $0$-cokernels. Therefore, by Proposition \ref{proposition.1}, $\overline{\C}$ is left coherent and $\gldim \big( \mod \overline{\C}^{\hspace{0.1em} \op} \big) \leqslant 1$. Suppose that $\overline{\C}$ has weak kernels. Then $\overline{\C}$ is right coherent, see \cite[Proposition 3.1]{2409.10438}, and it follows from \cite[Theorem A.5]{2409.10438} that $\gldim (\mod \overline{\C}) \leqslant 1$. Hence $\overline{\C}$ is $0$-abelian.
\end{proof}

For the record, let us state the dual results of Proposition \ref{proposition.20} and Corollary \ref{corollary.4}. They say that if $\C$ is a $0$-abelian category with enough projectives, then $\underline{\C}$ is additive, idempotent complete, and has $0$-kernels. Moreover, under these assumptions on $\C$, the following are equivalent:
\begin{enumerate}
    \item[(a)] $\underline{\C}$ is $0$-abelian.
    \item[(b)] $\underline{\C}$ is $n$-abelian for some nonnegative integer $n$.
    \item[(c)] $\underline{\C}$ has weak cokernels.
\end{enumerate}

Given the statements of Corollary \ref{corollary.4} and its dual version, it would be interesting to obtain conditions which guarantee that injectively stable categories and projectively stable categories have weak kernels and weak cokernels, respectively. In this direction, recall from \cite{MR591246} that a subcategory $\B$ of $\C$ is called \textit{contravariantly finite in $\C$} if for each $X \in \C$ there is a morphism $B \to X$ in $\C$ with $B \in \B$ such that every morphism $D \to X$ in $\C$ with $D \in \B$ factors through it. Dually, $\B$ is called \textit{covariantly finite in $\C$} if for each $X \in \C$ there is a morphism $X \to B$ in $\C$ with $B \in \B$ such that any other morphism $X \to D$ in $\C$ with $D \in \B$ factors through it. For example, note that $\Projectives$ is contravariantly finite in $\C$, while $\Injectives$ is covariantly finite in $\C$. The notions of contravariantly and covariantly finite subcategories are of particular interest to us due to the next result, which is well known.

\begin{proposition}\label{proposition.24}
Let $\B$ be a contravariantly finite subcategory of $\C$ that is closed under finite direct sums. If $\C$ has weak kernels, then so do $\B$ and $\C / \langle \B \rangle$.\footnote{Although we have the blanket assumption that $\C$ is additive and idempotent complete, it suffices to assume that $\C$ is additive for this proposition to be true.}
\end{proposition}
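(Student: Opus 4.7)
The plan is to treat the two assertions separately, using the right $\B$-approximations provided by contravariant finiteness to ``correct'' weak kernels taken in $\C$.

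For the claim about $\B$, given a morphism $f \in \B(B_{1},B_{2})$, first take a weak kernel $k \in \C(K,B_{1})$ of $f$ in $\C$ (which exists by hypothesis), and then take a right $\B$-approximation $b \in \C(B,K)$ (which exists since $\B$ is contravariantly finite in $\C$ and closed under finite direct sums, so $B \in \B$). I would argue that $kb \in \B(B,B_{1})$ is a weak kernel of $f$ in $\B$. Indeed, $f(kb) = (fk)b = 0$ in $\B$, and if $g \in \B(D,B_{1})$ satisfies $fg = 0$, then the weak kernel property of $k$ in $\C$ yields some $h \in \C(D,K)$ with $g = kh$; since $D \in \B$ and $b$ is a right $\B$-approximation of $K$, there is $h' \in \B(D,B)$ with $h = bh'$, so $g = (kb)h'$, as required.

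For the claim about $\C/\langle \B \rangle$, let $\overline{f} \in (\C/\langle \B \rangle)(X,Y)$ be represented by $f \in \C(X,Y)$, and take a right $\B$-approximation $b \in \C(B,Y)$. I would form the morphism $[\, f \;\; b \,] \in \C(X \oplus B, Y)$ in $\C$ and choose a weak kernel of it in $\C$, writing it as $\bigl(\begin{smallmatrix} k_{1} \\ k_{2} \end{smallmatrix}\bigr) \in \C(K, X \oplus B)$ with $k_{1} \in \C(K,X)$ and $k_{2} \in \C(K,B)$. I then claim that $\overline{k_{1}} \in (\C/\langle \B \rangle)(K,X)$ is a weak kernel of $\overline{f}$. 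First, $fk_{1} = -bk_{2}$ factors through $B \in \B$, so $\overline{f} \,\overline{k_{1}} = 0$. Next, if $\overline{g} \in (\C/\langle \B \rangle)(Z,X)$ satisfies $\overline{f}\,\overline{g} = 0$, represented by $g \in \C(Z,X)$, then $fg$ factors through some object of $\B$, say $fg = uv$ with $v \in \C(Z,B')$, $u \in \C(B',Y)$ and $B' \in \B$. Since $b$ is a right $\B$-approximation of $Y$ and $B' \in \B$, I would write $u = bw$ for some $w \in \C(B',B)$, and then observe that $[\, f \;\; b \,] \bigl(\begin{smallmatrix} g \\ -wv \end{smallmatrix}\bigr) = fg - bwv = 0$ in $\C$. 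The weak kernel property in $\C$ then supplies $h \in \C(Z,K)$ with $k_{1}h = g$ (and $k_{2}h = -wv$), so $\overline{k_{1}}\,\overline{h} = \overline{g}$.

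Neither part requires hard calculation once the right construction is spotted; the only genuinely tactical step is the use of the direct sum $X \oplus B$ with the right approximation in the second half, which is what converts the issue ``$fg$ factors through $\B$'' into the single vanishing condition needed to invoke the weak kernel in $\C$. Everything else amounts to unwinding the definitions of right $\B$-approximation, weak kernel, and the quotient by the ideal $\langle \B \rangle$.
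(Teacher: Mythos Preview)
Your proof is correct; both parts are handled cleanly and the key idea in the second half—augmenting $f$ by a right $\B$-approximation of $Y$ to form $[\,f\;\;b\,]$ before taking a weak kernel—is exactly the standard maneuver. The paper does not give its own argument but simply cites \cite[Proposition 3.1]{MR1856980}, and your write-up is essentially the proof one finds there; the only cosmetic point is that the parenthetical ``closed under finite direct sums, so $B \in \B$'' is misplaced (membership $B \in \B$ comes directly from the definition of contravariant finiteness), the closure hypothesis being needed instead to ensure that $\langle \B \rangle$ is an ideal.
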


\begin{proof}
See \cite[Proposition 3.1]{MR1856980}.
\end{proof}

Note that the dual result of Proposition \ref{proposition.24} states that if $\B$ is a covariantly finite subcategory of $\C$ that is closed under finite direct sums and $\C$ has weak cokernels, then both $\B$ and $\C / \langle \B \rangle$ have weak cokernels.

\begin{corollary}\label{corollary.6}
Let $\C$ be a $0$-abelian category with enough injectives. If the subcategory $\Injectives$ of injective objects of $\C$ is contravariantly finite in $\C$, then $\overline{\C}$ is a $0$-abelian category.
\end{corollary}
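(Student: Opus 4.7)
The plan is to combine Corollary \ref{corollary.4} with Proposition \ref{proposition.24}. By Corollary \ref{corollary.4}, to show that $\overline{\C}$ is $0$-abelian, it suffices to verify that $\overline{\C}$ has weak kernels, since we already know from Proposition \ref{proposition.20} that $\overline{\C}$ is additive and idempotent complete. So the task reduces to producing weak kernels in the quotient $\overline{\C} = \C/\langle \Injectives \rangle$.

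To apply Proposition \ref{proposition.24} with $\B = \Injectives$, I need three ingredients: that $\Injectives$ is contravariantly finite in $\C$, that $\Injectives$ is closed under finite direct sums, and that $\C$ itself has weak kernels. The first is the hypothesis of the corollary. The second is a routine fact (a finite direct sum of injective objects is injective in any additive category with finite biproducts). The third follows from $\C$ being $0$-abelian: by Theorem \ref{theorem.3}, $\C$ has $0$-kernels, and the footnote/argument in the proof of Proposition \ref{proposition.1} shows that a category with $0$-kernels actually has honest kernels, in particular weak kernels.

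Given these three facts, Proposition \ref{proposition.24} delivers that $\overline{\C} = \C/\langle \Injectives \rangle$ has weak kernels. Then Corollary \ref{corollary.4} closes the argument: condition (c) of Corollary \ref{corollary.4} holds, so condition (a) holds, i.e.\ $\overline{\C}$ is $0$-abelian.

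There is no real obstacle here, since all the machinery has been set up in the preceding subsections. The only thing that requires a moment's thought is checking that $\C$ has weak kernels, but this is a direct consequence of the explicit construction of kernels from $0$-kernels given in the proof of Proposition \ref{proposition.1}. Accordingly, the proof will be short, essentially a one-line invocation of Proposition \ref{proposition.24} followed by Corollary \ref{corollary.4}.
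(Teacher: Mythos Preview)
Your proposal is correct and matches the paper's proof essentially line for line: the paper also invokes the proof of Proposition~\ref{proposition.1} to get that $\C$ has weak kernels, then applies Proposition~\ref{proposition.24} (with $\B = \Injectives$) and Corollary~\ref{corollary.4}. Your additional remarks about $\Injectives$ being closed under finite direct sums and about Proposition~\ref{proposition.20} are implicit in the paper's one-line proof.
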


\begin{proof}
By the proof of Proposition \ref{proposition.1}, $\C$ has weak kernels. Therefore, the result follows from Proposition \ref{proposition.24} and Corollary \ref{corollary.4}.
\end{proof}

By duality, Corollary \ref{corollary.6} also gives that if $\C$ is $0$-abelian, has enough projectives and its subcategory of projective objects $\Projectives$ is covariantly finite in $\C$, then $\underline{\C}$ is $0$-abelian.

In view of Corollary \ref{corollary.6} and its dual result, it would be convenient to know sufficient conditions for a subcategory to be contravariantly or covariantly finite. In the next proposition, we present one such condition, which can be found implicitly in \cite[Proposition 4.2]{MR591246} and also in \cite[Proposition 1.9]{MR816889}. For the convenience of the reader, we include a proof, which is alternative to the ones given in the references.

\begin{proposition}\label{proposition.25}
Assume that there is a commutative ring $R$ for which $\C$ is an $R$-linear category. If $X \in \C$ is such that $\C(X,Y)$ is finitely generated as an $R$-module for every $Y \in \C$, then $\add X$ is contravariantly finite in $\C$.\footnote{Actually, we prove a more general statement. Namely, that if $X \in \C$ is such that $\C(X,Y)$ is finitely generated as an $\End(X)$-module for every $Y \in \C$, then $\add X$ is contravariantly finite in $\C$. Here, $\End(X)$ stands for the endomorphism ring of $X$ in $\C$.}
\end{proposition}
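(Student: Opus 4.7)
The plan is to prove the more general footnote version directly, from which the main statement follows because any finite $R$-generating set of $\C(X,Y)$ is automatically finite as an $\End(X)$-generating set (since $R$ acts via its image in $\End(X)$). So assume $X \in \C$ is such that $\C(X,Y)$ is finitely generated as a right $\End(X)$-module for every $Y \in \C$, where the action is by precomposition: $h \cdot \phi = h \phi$ for $h \in \C(X,Y)$ and $\phi \in \End(X)$. Given $Y \in \C$, choose generators $f_{1}, \ldots, f_{n} \in \C(X,Y)$, and form the morphism $f = (f_{1}, \ldots, f_{n}) \colon X^{n} \to Y$. The candidate object $B = X^{n} \in \add X$ with the morphism $f \colon B \to Y$ should be the desired right $\add X$-approximation of $Y$.

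To prove that $f$ has the approximation property, first I would verify the special case where the source is some power $X^{m}$. So let $g \in \C(X^{m},Y)$ be arbitrary, and let $\iota_{i} \colon X \to X^{m}$ denote the $i$-th coordinate inclusion, so that $g$ is determined by its components $g_{i} = g \iota_{i} \in \C(X,Y)$ for $i = 1, \ldots, m$. Since $\{f_{1}, \ldots, f_{n}\}$ generates $\C(X,Y)$ as a right $\End(X)$-module, for each $i$ there exist $\phi_{ji} \in \End(X)$ such that $g_{i} = \sum_{j=1}^{n} f_{j} \phi_{ji}$. Assembling the $\phi_{ji}$ into a morphism $\Phi \colon X^{m} \to X^{n}$ (the ``matrix'' with entries $\phi_{ji}$), one checks directly from the universal property of products that $g = f \Phi$.

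The general case now reduces to this one. Given $D \in \add X$ and $g \in \C(D,Y)$, pick a split monomorphism $\iota \colon D \to X^{m}$ with retraction $p \colon X^{m} \to D$, so that $p\iota = 1_{D}$. By the previous paragraph applied to $gp \colon X^{m} \to Y$, there is some $\Psi \colon X^{m} \to X^{n}$ with $gp = f\Psi$. Composing on the right with $\iota$ yields $g = gp\iota = f \Psi \iota$, which exhibits $g$ as factoring through $f$ via $\Psi \iota \colon D \to B$. Hence $f$ is a right $\add X$-approximation of $Y$, and $\add X$ is contravariantly finite in $\C$.

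There is no serious obstacle here; the content is essentially the well-known bijection between morphisms $X^{m} \to X^{n}$ in $\C$ and $n \times m$ matrices over $\End(X)$, together with the reduction from $\add X$ to finite powers of $X$ using the splitting $D \oplus D' \simeq X^{m}$. The only point that deserves care is keeping the left/right conventions for the $\End(X)$-action straight so that the matrix assembly of the $\phi_{ji}$ actually defines a morphism $X^{m} \to X^{n}$ whose composition with $f$ recovers $g$.
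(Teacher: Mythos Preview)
Your proof is correct. The paper's argument is organized a bit differently: it first observes (via Yoneda) that $\add X$ is contravariantly finite in $\C$ if and only if the restricted functor $\C(-,Y)\vert_{\add X}$ is a finitely generated $(\add X)$-module for every $Y$, and then invokes the equivalence $\Mod(\add X) \approx \Mod \End(X)$ (evaluation at $X$) to reduce this to $\C(X,Y)$ being finitely generated over $\End(X)$. Your argument is the explicit unwinding of exactly this: you build the approximation $f\colon X^{n}\to Y$ from a generating set and verify the factorization property by hand using the matrix description of morphisms between powers of $X$, then pass to summands. The paper's version is more in keeping with its functorial style and isolates the reusable equivalence $\Mod(\add X)\approx \Mod \End(X)$; yours is more elementary and self-contained, avoiding any appeal to functor categories. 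Both prove the stronger footnote statement.
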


\begin{proof}
Let $X \in \C$ be such that $\C(X,Y)$ is finitely generated as an $R$-module for every $Y \in \C$, and denote $\B = \add X$ and $\Lambda = \End(X)$, the endomorphism ring of $X$ in $\C$.

To begin with, we claim that $\B$ is contravariantly finite in $\C$ if and only if $\C(-,Y) \vert_{\B}$, the restriction of $\C(-,Y)$ to $\B$, is finitely generated as a $\B$-module for every $Y \in \C$.\footnote{This equivalence was observed by Auslander and Smal{\o} in \cite[page 81]{MR591246}, and it was the motivation for their choice of the terminology ``contravariantly finite subcategory''.} In fact, it follows from the Yoneda lemma that, if $B \in \B$ and $Y \in \C$, then every morphism $\B(-,B) \to \C(-,Y) \vert_{\B}$ in $\Mod \B$ is given by $\C(-,f) \vert_{\B}$, the restriction of $\C(-,f)$ to $\B$, for some $f \in \C(B,Y)$. Thus, as $\C(-,f) \vert_{\B}$ is an epimorphism in $\Mod \B$ if and only if for every $D \in \B$ and $g \in \C(D,Y)$ there is some $h \in \B(D,B)$ for which $g = fh$, the claim follows.

Now, as the evaluation functor at $X$ is an equivalence of categories $\Mod \B \to \Mod \Lambda$, see \cite[Section 8]{2409.10438}, it is easy to conclude that a given $F \in \Mod \B$ is finitely generated as a $\B$-module if and only if $F(X)$ is finitely generated as a $\Lambda$-module. In particular, for each $Y \in \C$, the $\B$-module $\C(-,Y) \vert_{\B}$ is finitely generated if and only if the $\Lambda$-module $\C(X,Y)$ is finitely generated. However, for every $Y \in \C$, we have that $\C(X,Y)$ is finitely generated as an $R$-module, and because $\Lambda$ is an $R$-algebra, we deduce that $\C(X,Y)$ is finitely generated as a $\Lambda$-module for all $Y \in \C$.
\end{proof}

Let us also note that the dual of Proposition \ref{proposition.25} says that if there is a commutative ring $R$ for which $\C$ is an $R$-linear category, and if $Y \in \C$ is such that $\C(X,Y)$ is finitely generated as an $R$-module for every $X \in \C$, then $\add Y$ is covariantly finite in $\C$.

In Section \ref{section.5}, we will give an example of a $0$-abelian category to which Corollary \ref{corollary.6}, Proposition \ref{proposition.25} and their duals can be applied, which shows that its injectively and projectively stable categories are $0$-abelian.

While Proposition \ref{proposition.20} and Corollaries \ref{corollary.4} and \ref{corollary.6} provide a fair understanding about the $n$-abelianness of the stable categories of a $0$-abelian category, it is natural to ask if these results admit analogues for the stable categories of an abelian category. Below, we present possible analogues, whose proofs follow closely the arguments used for their $0$-abelian versions.


\begin{proposition}\label{proposition.23}
Let $\A$ be an abelian category with enough injectives for which there is a positive integer $d$ such that $\gldim \A \leqslant d$. Then $\overline{\A}$ is additive, idempotent complete, and has $(3d-2)$-cokernels.
\end{proposition}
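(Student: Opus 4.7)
The plan is to mirror the strategy used in the proof of Proposition~\ref{proposition.20}, suitably adapted to the abelian setting with the global dimension bound $\gldim \A \leqslant d$. Additivity of $\overline{\A}$ is immediate from the additivity of $\A$. For idempotent completeness, I would appeal to a Hilton-Rees-style embedding of $\overline{\A}$ into $\mod \A$ of the form $X \mapsto \Ext^{1}_{\A}(-,X)$ (noting that $\A$ is right coherent by Theorem~\ref{theorem.1} with $n = 1$), combined with the fact that $\mod \A$ is abelian and therefore idempotent complete; a splitting of an idempotent in $\mod \A$ can be traced back through the embedding to split the original idempotent in $\overline{\A}$.

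For the main assertion on $(3d-2)$-cokernels, given $\overline{f} \in \overline{\A}(X,Y)$ with representative $f \in \A(X,Y)$, I would first form the epi-mono factorization $X \twoheadrightarrow \Image f \hookrightarrow Y$ in $\A$ and then consider injective coresolutions of $\Image f$, of $\Coker f$, and of the successive cosyzygies obtained along the way, each of length at most $d$ thanks to $\gldim \A \leqslant d$. Splicing three such coresolutions together (matching the decomposition $d + (d-1) + (d-1) = 3d - 2$) yields a sequence in $\A$ starting at $Y$ which descends to a candidate $(3d-2)$-cokernel in $\overline{\A}$. The injective terms become zero after passing to $\overline{\A}$ only partially; the essential content lies in the connecting morphisms, which encode the higher Ext structure of $\A$.

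The main obstacle is verifying that the induced sequence of contravariant representables
\[ \begin{tikzcd}
0 \arrow[r] & \overline{\A}(Z_{3d-2},-) \arrow[r] & \cdots \arrow[r] & \overline{\A}(Z_{1},-) \arrow[r] & \overline{\A}(Y,-) \arrow[r] & \overline{\A}(X,-)
\end{tikzcd} \]
is exact in $\Mod \overline{\A}^{\op}$. This would require translating stable hom groups into higher Ext groups via a Hilton-Rees-type isomorphism and chasing through the long exact sequences of Ext induced by the short exact sequences into which the injective coresolutions decompose. The hypothesis $\gldim \A \leqslant d$ is precisely what forces these long exact sequences to terminate after $d$ steps; combining three such terminating sequences with the appropriate overlap should produce exactly the desired exactness together with the count $3d - 2$. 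A potential alternative, likely cleaner, is to first establish a version of Theorem~\ref{theorem.6} embedding $\overline{\A}$ into $\mod \A$ with image controlled by a suitable bound on projective dimension, and then realize the entire construction of the $(3d-2)$-cokernel inside $\mod \A$ before lifting it back along the embedding; even so, the exactness verification at each intermediate term will still demand careful diagram-chasing.
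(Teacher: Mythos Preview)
Your outline has one genuine gap and one place where your approach diverges from the paper's in a way that makes the argument substantially harder.

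\textbf{Idempotent completeness.} Knowing that $\mod \A$ is idempotent complete is not enough: you must show that the essential image of the Hilton--Rees embedding $X \mapsto \Ext^{1}_{\A}(-,X)$ is closed under direct summands, i.e.\ that every direct summand of $\Ext^{1}_{\A}(-,X)$ is again of the form $\Ext^{1}_{\A}(-,V)$ for some $V \in \A$. This is precisely Auslander's conjecture, which is false in general but holds when $\gldim \A$ is finite (Auslander, \cite[Proposition~4.7]{MR0212070}). Without invoking this result you cannot ``trace back'' a splitting from $\mod \A$ to $\overline{\A}$; the paper uses it explicitly.

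\textbf{Construction of the $(3d-2)$-cokernel.} Your plan of factoring $f$ and splicing three injective coresolutions with the count $d + (d-1) + (d-1)$ is vague, and it is not clear what sequence in $\A$ this actually produces or why applying $\overline{\A}(-,V)$ to it gives exactness. The paper's route is both simpler and matches the alternative you mention at the end: embed $\overline{\A}$ into $\eff \A \subseteq \mod \A$ via Hilton--Rees, form a \emph{single} short exact sequence
\[
0 \longrightarrow X \longrightarrow Y \oplus I \longrightarrow Z \longrightarrow 0
\]
in $\A$ (with $I$ injective so that $\bigl(\begin{smallmatrix} f \\ g \end{smallmatrix}\bigr)$ is monic), and read off its long exact sequence in $\Ext^{\bullet}_{\A}$. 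Since $\gldim \A \leqslant d$, this sequence terminates at $\Ext^{d}_{\A}(-,Z)$, and after using $\Ext^{j}_{\A}(-,V) \simeq \Ext^{1}_{\A}(-,\Sigma^{j-1}V)$ it lies entirely in the image of the embedding; the segment from $\Ext^{1}_{\A}(-,Y)$ onward has exactly $3d-2$ arrows. The exactness check you worry about then becomes a one-line consequence of the fact (also due to Auslander, \cite[Proposition~4.3]{MR0212070}) that each $\Ext^{1}_{\A}(-,V)$ is injective in the Serre subcategory $\eff \A$, so applying $\Hom(-,\Ext^{1}_{\A}(-,V))$ preserves exactness of the long exact sequence. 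No diagram chase is needed.
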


\begin{proof}
To begin with, we remark that $\Ext_{\A}^{1}(-,X)$ is a finitely presented $\A$-module for each $X \in \A$. Indeed, by considering the long exact sequence induced by a short exact sequence \[ \begin{tikzcd}
0 \arrow[r] & X \arrow[r] & I \arrow[r] & \Sigma X \arrow[r] & 0
\end{tikzcd} \] in $\A$ with $I$ injective, we obtain a projective presentation (resolution) of $\Ext_{\A}^{1}(-,X)$ in $\mod \A$.\footnote{Recall from Theorem \ref{theorem.1} that $\mod \A$ is an abelian category with $\gldim (\mod \A) \leqslant 2$.} In particular, we conclude from Proposition \ref{proposition.6} that $\Ext_{\A}^{1}(-,X) \in \eff \A$ for every $X \in \A$. Moreover, by the Hilton-Rees theorem, see \cite{MR124377} or \cite[Theorem 8]{MR3516080}, for each $X,Y \in \A$ there is an isomorphism of abelian groups \[ \overline{\A}(X,Y) \to \Hom(\Ext_{\A}^{1}(-,X), \Ext_{\A}^{1}(-,Y)) \] given by $\overline{f} \mapsto \Ext_{\A}(-,f)$, which natural in both $X$ and $Y$. Hence these isomorphisms define a fully faithful additive functor $\overline{\A} \to \mod \A$. Furthermore, since $\A$ has finite global dimension, it follows from \cite[Proposition 4.7]{MR0212070}, or \cite{MR237606}, that every direct summand of $\Ext_{\A}^{1}(-,X)$ in $\mod \A$ is isomorphic to $\Ext_{\A}^{1}(-,V)$ for some $V \in \A$. Also, recall that $\eff \A$ is \textit{Serre subcategory} of $\mod \A$, that is, for every short exact sequence \[ \begin{tikzcd}
0 \arrow[r] & F \arrow[r] & G \arrow[r] & H \arrow[r] & 0
\end{tikzcd} \] in $\mod \A$, it holds that $G \in \eff \A$ if and only if $F,H \in \eff \A$, see \cite[Section 3]{MR0212070}. Consequently, $\eff \A$ is an abelian category and a sequence in $\eff \A$ is exact in $\eff \A$ if and only if it is exact in $\mod \A$. Finally, it follows from \cite[Proposition 4.3]{MR0212070} that $\Ext_{\A}^{1}(-,X)$ is injective in $\eff \A$ for every $X \in \A$.

Now, we are ready to prove the proposition. First, note that $\overline{\A}$ is additive since $\A$ is additive. Second, let $\Ext_{\A}^{1}(-,\A)$ be the subcategory of $\mod \A$ consisting of the objects $\Ext_{\A}^{1}(-,X)$ with $X \in \A$. As we have remarked in the above paragraph, $\Ext_{\A}^{1}(-,\A)$ is closed under direct summands in $\mod \A$, hence it is easy to deduce that it is idempotent complete. Thus, because the functor $\overline{\A} \to \mod \A$ mentioned above gives an isomorphism between $\overline{\A}$ and $\Ext_{\A}^{1}(-,\A)$, we conclude that $\overline{\A}$ is idempotent complete, see also \cite{MR206069}. Third, let $\overline{f} \in \overline{\A}(X,Y)$ be a morphism in $\overline{\A}$, represented by some $f \in \A(X,Y)$. Since $\A$ has enough injectives, there is a monomorphism $g \in \A(Y,I)$ in $\A$ with $I$ injective. Therefore, we can consider a short exact sequence \[ \begin{tikzcd}[ampersand replacement=\&]
0 \arrow[r] \& X \arrow[r, "{\left(\begin{smallmatrix}
        f \\ g
    \end{smallmatrix}\right)}"] \&[0.4em] Y \oplus I \arrow[r] \& Z \arrow[r] \& 0
\end{tikzcd} \] in $\A$, and its induced long exact sequence gives an exact sequence \[ \begin{tikzcd}
\Ext_{\A}^{1}(-,X) \arrow[r, "{\Ext_{\A}^{1}(-,f)}"] &[2.4em] \Ext_{\A}^{1}(-,Y) \arrow[r] & \Ext_{\A}^{1}(-,Z) \arrow[out=0, in=180, looseness=2, overlay, lld] & {} \\[0.4em]
\Ext_{\A}^{2}(-,X) \arrow[r]             & \Ext_{\A}^{2}(-,Y) \arrow[r]      & \Ext_{\A}^{2}(-,Z) \arrow[out=0, in=180, looseness=2, overlay, lld] & {} \\[0.4em]
{\phantom{\Ext_{\A}^{2}(-,X)}}                       & \cdots               & {\phantom{\Ext_{\A}^{2}(-,Z)}} \arrow[out=0, in=180, looseness=2, overlay, lld] & {} \\[0.4em]
\Ext^{d}_{\A}(-,X) \arrow[r] & \Ext_{\A}^{d}(-,Y) \arrow[r] & \Ext_{\A}^{d}(-,Z) \arrow[r] & 0
\end{tikzcd} \] in $\mod \A$, which is also an exact sequence in $\eff \A$. In this case, given that $\Ext_{\A}^{j}(-,V) \simeq \Ext_{\A}^{1}(-,\Sigma^{j-1}V)$ for every $V \in \A$ and $j \geqslant 2$, where $\Sigma^{j-1} V$ is a $(j-1)$\textsuperscript{th} cosyzygy of $V$,\footnote{A $j$\textsuperscript{th} \textit{cosyzygy} of an object in $\A$ is the dual notion of a $j$\textsuperscript{th} syzygy, defined in \cite[Subsection 2.4]{2409.10438}.} the above sequence is isomorphic to a sequence that lies in the category $\Ext_{\A}^{1}(-,\A)$, which gives a $(3d-2)$-cokernel of $\Ext_{\A}^{1}(-,f)$ in $\Ext_{\A}^{1}(-,\A)$. In fact, given $V \in \A$, because $\Ext_{\A}^{1}(-,V)$ is injective in $\eff \A$, when we apply $\Hom\big(-,\Ext_{\A}^{1}(-,V)\big)$ to the mentioned sequence in $\Ext_{\A}^{1}(-,\A)$, which is exact in $\eff \A$, we obtain an exact sequence in $\Ab$. Therefore, by considering the isomorphism between $\overline{\A}$ and $\Ext_{\A}^{1}(-,\A)$ induced by the functor $\overline{\A} \to \mod \A$, we conclude that $\overline{f}$ has a $(3d-2)$-cokernel in $\overline{\A}$.
\end{proof}

Recall from \cite[Section 3]{2409.10438} that we call a category \textit{pre-$n$-abelian} if it is additive, idempotent complete, and has both $n$-kernels and $n$-cokernels. By Proposition \ref{proposition.1}, a category is pre-$0$-abelian if and only if it is $0$-abelian.

\begin{corollary}\label{corollary.5}
Let $\A$ be an abelian category with enough injectives for which there is a positive integer $d$ such that $\gldim \A \leqslant d$. The following are equivalent:
\begin{enumerate}
    \item[(a)] $\overline{\A}$ is pre-$(3d-2)$-abelian.
    \item[(b)] $\overline{\A}$ is pre-$n$-abelian for some nonnegative integer $n$.
    \item[(c)] $\overline{\A}$ has weak kernels.
\end{enumerate}
\end{corollary}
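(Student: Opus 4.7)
The plan is to mirror the strategy of Corollary \ref{corollary.4}, whose cycle (a) $\Rightarrow$ (b) $\Rightarrow$ (c) $\Rightarrow$ (a) adapts almost verbatim to this higher-dimensional setting once Proposition \ref{proposition.23} has supplied the ``half'' that Proposition \ref{proposition.20} provided in the $0$-abelian case. The implication (a) $\Rightarrow$ (b) is immediate by setting $n = 3d-2$, which is a positive integer since $d \geqslant 1$. For (b) $\Rightarrow$ (c), I would recall the standard observation that in any preadditive category with $n$-kernels (for any $n \geqslant 1$), the leftmost morphism in an $n$-kernel sequence is a weak kernel, so pre-$n$-abelianness forces the existence of weak kernels.

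The substantive content lies in (c) $\Rightarrow$ (a). By Proposition \ref{proposition.23}, $\overline{\A}$ is additive, idempotent complete, and has $(3d-2)$-cokernels. Invoking the ``cokernel'' analogue of \cite[Proposition 3.2]{2409.10438} (namely \cite[Proposition 3.5]{2409.10438}), the existence of $(3d-2)$-cokernels is equivalent to $\overline{\A}$ being left coherent with $\gldim \big( \mod \overline{\A}^{\hspace{0.1em}\op} \big) \leqslant 3d-1$. Now I would use hypothesis (c): weak kernels in $\overline{\A}$ guarantee, by \cite[Proposition 3.1]{2409.10438}, that $\overline{\A}$ is right coherent. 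Since $\overline{\A}$ is then coherent on both sides, \cite[Theorem A.5]{2409.10438} yields $\gldim (\mod \overline{\A}) = \gldim \big( \mod \overline{\A}^{\hspace{0.1em}\op} \big) \leqslant 3d-1$, and a final application of \cite[Proposition 3.2]{2409.10438} translates this back to the existence of $(3d-2)$-kernels. Combined with the properties from Proposition \ref{proposition.23}, this establishes that $\overline{\A}$ is pre-$(3d-2)$-abelian.

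I do not anticipate any genuine obstacle here: the entire argument is a bookkeeping exercise that reduces to checking that the ingredients already proved in Proposition \ref{proposition.23} (cokernel side) combine with the functorial characterization of $n$-kernels/$n$-cokernels from \cite[Section 3]{2409.10438} in the expected symmetric fashion. The only point deserving a quick sanity check is that $3d-2 \geqslant 1$ so that the cited functorial characterizations, which assume the index is positive, are applicable; this holds because $d \geqslant 1$, so no special attention to the boundary case $d=1$ (corresponding to hereditary $\A$, where $(3d-2)$-abelianness collapses to the ordinary abelian notion) is needed.
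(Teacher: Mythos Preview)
Your proposal is correct and follows essentially the same route as the paper, which explicitly says the proof mirrors that of Corollary \ref{corollary.4} with \cite[Proposition 3.2 or 3.3]{2409.10438} replacing Proposition \ref{proposition.1} in the step (c) $\Rightarrow$ (a). The only minor oversight is that your argument for (b) $\Rightarrow$ (c) treats only $n \geqslant 1$, whereas (b) allows $n = 0$; this case is handled by the proof of Proposition \ref{proposition.1}, as the paper already notes in the proof of Corollary \ref{corollary.4}.
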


\begin{proof}
This is similar to the proof of Corollary \ref{corollary.4}, the only difference being that one should use \cite[Proposition 3.2]{2409.10438} or \cite[Proposition 3.3]{2409.10438} in place of Proposition \ref{proposition.1} to prove that (c) implies (a).
\end{proof}

\begin{corollary}\label{corollary.7}
Let $\A$ be an abelian category with enough injectives for which there is a positive integer $d$ such that $\gldim \A \leqslant d$. If the subcategory of injective objects of $\A$ is contravariantly finite in $\A$, then $\overline{\A}$ is a pre-$(3d-2)$-abelian category.
\end{corollary}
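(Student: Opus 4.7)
The plan is to follow the exact same strategy used to prove Corollary \ref{corollary.6}, with Corollary \ref{corollary.5} playing the role of Corollary \ref{corollary.4}. First, since $\A$ is abelian, it has all kernels, and so in particular it has weak kernels. Moreover, $\A$ is automatically additive and idempotent complete (as is required by the blanket convention in the paper), since every idempotent in an abelian category splits. The subcategory $\Injectives$ of injective objects of $\A$ is closed under finite direct sums (a finite direct sum of injective objects is injective) and, by hypothesis, contravariantly finite in $\A$.

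Having arranged these inputs, I would invoke Proposition \ref{proposition.24} applied to the inclusion $\Injectives \subseteq \A$, which yields that the injectively stable category $\overline{\A} = \A / \langle \Injectives \rangle$ has weak kernels. Next, I would appeal to Corollary \ref{corollary.5}, whose hypotheses on $\A$ are identical to those of the present corollary. Since condition (c) of Corollary \ref{corollary.5} has just been verified for $\overline{\A}$, condition (a) of that corollary follows, namely that $\overline{\A}$ is pre-$(3d-2)$-abelian, which is exactly what we want.

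I do not anticipate any substantive obstacle, since essentially all the heavy lifting has been packaged into Proposition \ref{proposition.24} and Corollary \ref{corollary.5}. The only points that require explicit (but trivial) verification are the closure of $\Injectives$ under finite direct sums and the existence of weak kernels in $\A$, both of which are automatic for an abelian category. In short, this corollary is the abelian analogue of Corollary \ref{corollary.6}, and its proof should be a one-line invocation of the two cited results in sequence.
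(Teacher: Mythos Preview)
Your proposal is correct and follows essentially the same approach as the paper, which simply says the result ``follows from Proposition~\ref{proposition.24} and Corollary~\ref{corollary.5}.'' You have merely unpacked the trivial verifications (that $\A$ has weak kernels and that $\Injectives$ is closed under finite direct sums) needed to invoke these two results.
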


\begin{proof}
Follows from Proposition \ref{proposition.24} and Corollary \ref{corollary.5}.
\end{proof}

Let us also state the duals of Proposition \ref{proposition.23} and Corollaries \ref{corollary.5} and \ref{corollary.7}. Namely, if $\A$ is an abelian category with enough projectives for which there is a positive integer $d$ such that $\gldim \A \leqslant d$, then $\underline{\A}$ is additive, idempotent complete, and has $(3d-2)$-kernels. Moreover, assuming these conditions on $\A$, the following are equivalent:
\begin{enumerate}
    \item[(a)] $\underline{\A}$ is pre-$(3d-2)$-abelian.
    \item[(b)] $\underline{\A}$ is pre-$n$-abelian for some nonnegative integer $n$.
    \item[(c)] $\underline{\A}$ has weak cokernels.
\end{enumerate}
Also, if in addition to the previous assumptions on $\A$, the subcategory of projective objects of $\A$ is covariantly finite in $\A$, then $\underline{\A}$ is is a pre-$(3d-2)$-abelian category.

Given the previous results, it is natural to ask under which conditions on $\A$, in the settings of Proposition \ref{proposition.23} and its dual, respectively, the categories $\overline{\A}$ and $\underline{\A}$ are $(3d-2)$-abelian, when $d$ is taken to be as small as possible.\footnote{That is, when $d = \gldim \A$, excluding the case when $\A$ has global dimension equal to zero.} Below, we indicate a few results in the literature concerning this problem for $d = 1$, which may shed some light on the general case.

Let $\A$ be an abelian category with enough projectives and $\gldim \A \leqslant 1$. Denote the subcategory of projective objects of $\A$ by $\Projectives (\A)$, and let $^{\perp} \Projectives (\A)$ be the subcategory of $\A$ consisting of the objects $X \in \A$ such that $\A(X,P) = 0$ for every $P \in \Projectives (\A)$. Consider the following conditions:
\begin{enumerate}
    \item[(a)] $\Projectives (\A)$ is covariantly finite in $\A$.
    \item[(b)] $\underline{\A}$ is pre-abelian.
    \item[(c)] $\underline{\A}$ is abelian.
    \item[(d)] $\underline{\A}$ is balanced.
    \item[(e)] $^{\perp} \Projectives (\A)$ is closed under subobjects in $\A$.
    \item[(f)] $\domdim \A \geqslant 1$.
\end{enumerate}
If $\A$ satisfies (a), then we know from the dual of Corollary \ref{corollary.7} that (b) holds. Furthermore, it follows from \cite[Proposition 9 and page 564]{MR2563182} that if $\A$ satisfies (a), then conditions (c), (d) and (e) are equivalent, and they are also equivalent to (f) if, in addition to satisfying (a), the category $\A$ has enough injectives. Now, if $\A = \Mod \Lambda$ for some ring $\Lambda$ (and $\gldim \A \leqslant 1$), then we conclude from \cite[Theorem 9.5]{MR3335998} and \cite[Theorem 3.2]{MR269686} that (c) is equivalent to (f), and also that (c) and (f) imply (a), by \cite[Proposition 9.2]{MR3335998} and \cite[Proposition 3.5]{MR1244922} or \cite[Proposition 3.14]{MR1248754} or \cite[Theorem 5.7]{MR1856980}. Note that, by taking opposite categories, we deduce that dual results also hold for abelian categories with enough injectives and global dimension at most $1$, except, \textit{a priori}, the ``duals'' of results about $\Mod \Lambda$ for a ring $\Lambda$, since the opposite category of $\Mod \Lambda$ is no longer a category of modules over a ring.\footnote{The reader might want to compare, for example, \cite[Theorem 5.7]{MR1856980} with \cite[Theorem 5.3]{MR1856980}.}

The author believes that it would be a fruitful project to investigate further the relations between the above conditions for an abelian category $\A$. It would be even more interesting to consider the general case where $\gldim \A \leqslant d$ for a positive integer $d$, and to study when the injectively and projectively stable categories of $\A$ are $(3d-2)$-abelian, or possibly $n$-abelian for some $n$ not of the form $3d - 2$.

It seems appropriate to end this discussion by pointing out where one could look for examples. Let $\Lambda$ be a ring and $d$ be a positive integer. It follows from \cite[Lemma 5.2]{MR1856980} and \cite[Proposition 8.3]{2409.10438} that if $\Lambda$ is right coherent, then $\proj \Lambda$ is covariantly finite in $\mod \Lambda$ if and only if $\Lambda$ is left coherent, see also \cite[Corollary 3.11]{MR1248754}. Consequently, if $\Lambda$ is coherent and has weak dimension at most $d$, so that $\gldim (\mod \Lambda) \leqslant d$, see \cite[Proposition 1.1]{MR306265}, then we conclude from the dual of Corollary \ref{corollary.7} that $\underline{\mod} \Lambda$, the projectively stable category of $\mod \Lambda$, is a pre-$(3d-2)$-abelian category. More generally, it follows from \cite[Lemma 5.2]{MR1856980} and the dual of \cite[Propositions 3.1]{2409.10438} that if $\C$ is right coherent, then $\proj \C$ is covariantly finite in $\mod \C$ if and only if $\C$ is left coherent. Therefore, we can conclude from \cite[Propositions 3.2]{2409.10438} and the dual of Corollary \ref{corollary.7} that if $\C$ is a pre-$n$-abelian category, then the projectively stable category $\underline{\mod} \C$ of $\mod \C$ is pre-$(3n+1)$-abelian. In particular, we deduce from Theorem \ref{theorem.2} and the previous discussion concerning \cite[Proposition 9 and page 564]{MR2563182} that if $\C$ is a $0$-abelian category with enough injectives, then $\underline{\mod} \C$ is abelian. Now, getting back to the ring case, as we implicitly mentioned before, the category of projective objects in $\Mod \Lambda$ is covariantly finite in $\Mod \Lambda$ if and only if $\Lambda$ is left coherent and right perfect, see \cite[Proposition 3.5]{MR1244922} or \cite[Proposition 3.14]{MR1248754} or \cite[Theorem 5.7]{MR1856980}. Consequently, if $\Lambda$ is left coherent, right perfect and has right global dimension at most $d$, then the projectively stable category $\underline{\Mod} \Lambda$ of $\Mod \Lambda$ is pre-$(3d-2)$-abelian, by the dual of Corollary \ref{corollary.7}. Finally, by \cite[Theorem 5.3]{MR1856980} or \cite[Theorem 5.4.1]{MR1753146}, the subcategory of injective objects of $\Mod \Lambda$ is contravariantly finite in $\Mod \Lambda$ if and only if $\Lambda$ is right noetherian. Hence we deduce from Corollary \ref{corollary.7} that if $\Lambda$ is right noetherian and has right global dimension at most $d$, then the injectively stable category $\overline{\Mod} \Lambda$ of $\Mod \Lambda$ is pre-$(3d-2)$-abelian.\footnote{In the same way that we generalized the statements about $\proj \Lambda$ being covariantly finite in $\mod \Lambda$ to when $\proj \C$ is covariantly finite in $\mod \C$, it is also possible to generalize the statements about the covariant and contravariant finiteness of the subcategories of, respectively, projective and injective objects of $\Mod \Lambda$. We refer the interested reader to \cite[Theorem 7.6]{MR2027559} for such generalizations.}

\section{\texorpdfstring{$0$}{0}-Abelian categories with additive generators}\label{section.5}

Similarly to \cite[Section 8]{2409.10438}, in this section, we consider previous results for the case of additive and idempotent complete categories with additive generators. Since these are precisely the categories that are equivalent to $\proj \Lambda$ for some ring $\Lambda$, see \cite[Lemma 8.2]{2409.10438}, our task in this section amounts to specializing previous results to rings and modules over rings. This is achieved through the equivalences of categories $\Mod (\proj \Lambda) \approx \Mod \Lambda$ and $\Mod (\proj \Lambda)^{\op} \approx \Mod \Lambda^{\op}$, both given by the evaluation at $\Lambda$, which also induce equivalences of categories $\mod (\proj \Lambda) \approx \mod \Lambda$ and $\mod (\proj \Lambda)^{\op} \approx \mod \Lambda^{\op}$, see \cite[Section 8]{2409.10438}. As we show, this approach leads us to conclude that $0$-abelian categories with additive generators are in correspondence with ``semi-hereditary'' rings. Therefore, examples of $0$-abelian categories can be constructed from such rings. Moreover, using this approach, we deduce a few results on ``semi-hereditary'' rings, which should be well known, although the proofs we present seem to be new.

For the rest of this section, let $\Lambda$ be a ring.

\subsection{How to construct examples}

Recall that a ring $\Lambda$ is called \textit{right semi-hereditary} when every finitely generated submodule of a projective right $\Lambda$-module is projective, see \cite[Corollary 2.30]{MR1653294}. Also, $\Lambda$ is called \textit{left semi-hereditary} when every finitely generated submodule of a projective left $\Lambda$-module is projective. If $\Lambda$ is both right and left semi-hereditary, then we say that $\Lambda$ is \textit{semi-hereditary}. The next proposition explains why such rings are of \mbox{interest to us.}


\begin{proposition}\label{proposition.26}
Let $\Lambda$ be a ring. The following are equivalent:
\begin{enumerate}
    \item[(a)] $\Lambda$ is right semi-hereditary.
    \item[(b)] $\Lambda$ is right coherent and has weak dimension at most $1$.
    \item[(c)] $\Lambda$ is right coherent and $\gldim (\mod \Lambda) \leqslant 1$.
\end{enumerate}
\end{proposition}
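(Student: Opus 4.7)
My plan is to prove the two equivalences (a) $\Leftrightarrow$ (b) and (b) $\Leftrightarrow$ (c) essentially independently, using the classical characterization of semi-hereditary rings in terms of finitely generated ideals for the first, and a standard theorem on coherent rings for the second. All arguments stay within elementary ring theory and do not invoke the functorial machinery developed earlier in the paper.

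For (a) $\Rightarrow$ (b), I would start by noting that in a right semi-hereditary ring every finitely generated right ideal is projective (being a finitely generated submodule of the projective module $\Lambda$), and hence finitely presented. This immediately forces $\Lambda$ to be right coherent, and the flatness of all finitely generated right ideals is a standard reformulation of the condition that $\Lambda$ has weak dimension at most $1$. For the converse (b) $\Rightarrow$ (a), the key step is to show that every finitely generated submodule $M$ of a projective right $\Lambda$-module $P$ is projective. I would first embed $M$ into a finitely generated free module: by writing $P$ as a direct summand of a free module and noting that a finite generating set of $M$ involves only finitely many free coordinates, one produces a finitely generated free direct summand $F$ of the ambient free module containing $M$. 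Then $M$ is finitely presented by coherence, the short exact sequence $0 \to M \to F \to F/M \to 0$ combined with the bound on the flat dimension of $F/M$ forces $M$ to be flat, and being finitely presented and flat $M$ must be projective.

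For (b) $\Leftrightarrow$ (c), I would invoke \cite[Proposition 1.1]{MR306265}, which is already cited earlier in the excerpt and asserts that for a right coherent ring $\Lambda$ the weak dimension of $\Lambda$ coincides with $\gldim (\mod \Lambda)$. The underlying reason is that, over a coherent ring, every finitely presented module admits a projective resolution by finitely generated projectives, and a finitely presented flat module is automatically projective, so the flat and projective dimensions of objects in $\mod \Lambda$ agree. I do not foresee serious obstacles here; the main care needed is to cleanly connect the various standard reformulations of ``weak dimension at most $1$'' (finitely generated right ideals being flat, flat dimension of arbitrary modules, and so on) and to justify the embedding-into-a-free-module step in (b) $\Rightarrow$ (a) when $P$ is not itself finitely generated.
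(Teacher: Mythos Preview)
Your proposal is correct and follows essentially the same approach as the paper: both invoke \cite[Proposition 1.1]{MR306265} for (b) $\Leftrightarrow$ (c), and both reduce the remaining equivalence to the standard characterization of right semi-hereditary rings via finitely generated right ideals. The only cosmetic difference is that the paper links (a) directly with (c) by citing \cite[Corollary 2.30]{MR1653294} for the equivalence ``right semi-hereditary $\Leftrightarrow$ every finitely generated right ideal is projective'', whereas you link (a) with (b) and spell out the embedding-into-a-free-module argument by hand; the content is the same.
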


\begin{proof}
If $\Lambda$ is right coherent, then $\gldim (\mod \Lambda)$ coincides with the weak dimension of $\Lambda$, see \cite[Proposition 1.1]{MR306265}. Hence (b) is equivalent to (c). Moreover, it is easy to see that (a) is equivalent to (c) since $\Lambda$ is right coherent if and only if every finitely generated right ideal of $\Lambda$ is finitely presented,\footnote{Recall from \cite{2409.10438} that we have defined a ring $\Lambda$ to be \textit{right coherent} when the category $\mod \Lambda$ is abelian. It is well known that this is the case if and only if every finitely generated right ideal of $\Lambda$ is finitely presented. This can be proved with the assistance of \cite[Corollary 4.52]{MR1653294}.} and $\Lambda$ is right semi-hereditary if and only if every finitely generated right ideal of $\Lambda$ is projective, see \mbox{\cite[Corollary 2.30]{MR1653294}.}
\end{proof}


Observe that, by taking $\Lambda^{\op}$ in place of $\Lambda$ in Proposition \ref{proposition.26}, we obtain the corresponding result for left $\Lambda$-modules, which characterizes left semi-hereditary rings.

We can now start to specialize previous results to rings and modules over rings.

\begin{proposition}\label{proposition.5}
Let $\Lambda$ be a ring. The category $\proj \Lambda$ has $0$-kernels if and only if $\Lambda$ is right semi-hereditary.
\end{proposition}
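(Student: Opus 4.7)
The plan is to chain together Proposition \ref{proposition.1}, the equivalence of categories $\mod (\proj \Lambda) \approx \mod \Lambda$, and Proposition \ref{proposition.26}.

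First I would note that $\proj \Lambda$ is an additive and idempotent complete category, so that Proposition \ref{proposition.1} applies: $\proj \Lambda$ has $0$-kernels if and only if $\proj \Lambda$ satisfies axiom ($0$F1), that is, $\proj \Lambda$ is right coherent and $\gldim (\mod (\proj \Lambda)) \leqslant 1$.

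Next, I would use the equivalence of categories $\mod (\proj \Lambda) \approx \mod \Lambda$ given by evaluation at $\Lambda$, which was recalled at the start of Section \ref{section.5}. Under this equivalence, the condition ``$\mod (\proj \Lambda)$ is abelian'' (that is, $\proj \Lambda$ is right coherent) translates into ``$\mod \Lambda$ is abelian'' (that is, $\Lambda$ is right coherent), and the global dimensions of $\mod (\proj \Lambda)$ and $\mod \Lambda$ coincide. Hence the conditions of the previous paragraph become: $\Lambda$ is right coherent and $\gldim (\mod \Lambda) \leqslant 1$.

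Finally, an application of Proposition \ref{proposition.26} yields that these last two conditions together are equivalent to $\Lambda$ being right semi-hereditary, which completes the proof. I do not anticipate any real obstacle here: the result is essentially a dictionary entry that reads off the definition of $0$-kernels (via the functorial axiom ($0$F1)) through the standard equivalence $\mod (\proj \Lambda) \approx \mod \Lambda$.
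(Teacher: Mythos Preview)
Your proposal is correct and matches the paper's own proof, which simply cites Propositions~\ref{proposition.1} and~\ref{proposition.26} together with the equivalence $\mod(\proj\Lambda)\approx\mod\Lambda$. You have merely spelled out the chain of equivalences in more detail than the paper does.
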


\begin{proof}
Follows from Propositions \ref{proposition.1} and \ref{proposition.26} and the equivalence $\mod (\proj \Lambda) \approx \mod \Lambda$.
\end{proof}

By considering Proposition \ref{proposition.1}, the left version of Proposition \ref{proposition.26} and the equivalence $\mod (\proj \Lambda)^{\op} \approx \mod \Lambda^{\op}$, we also conclude that $\proj \Lambda$ has $0$-cokernels if and only if $\Lambda$ is left semi-hereditary. This is, of course, the dual statement of Proposition \ref{proposition.5}.\footnote{Indeed, note that $(\proj \Lambda)^{\op}$ is equivalent to $\proj \Lambda^{\op}$ via the duality $(-)^{\ast} : \proj \Lambda \leftrightarrow \proj \Lambda^{\op}$, see \cite[Section 8]{2409.10438} for details. We will mention this fact again right after Proposition \ref{proposition.29}.}

\begin{theorem}\label{theorem.7}
Let $\Lambda$ be a ring. The category $\proj \Lambda$ is $0$-abelian if and only if $\Lambda$ is semi-hereditary.
\end{theorem}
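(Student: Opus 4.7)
The plan is to assemble this from the pieces that have just been put in place. By Theorem \ref{theorem.3}, the category $\proj \Lambda$ is $0$-abelian if and only if it is additive and has both $0$-kernels and $0$-cokernels. Since $\proj \Lambda$ is trivially additive, the theorem reduces to verifying that $\proj \Lambda$ has $0$-kernels and $0$-cokernels precisely when $\Lambda$ is semi-hereditary.

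For the $0$-kernels half, this is exactly Proposition \ref{proposition.5}: $\proj \Lambda$ has $0$-kernels if and only if $\Lambda$ is right semi-hereditary. For the $0$-cokernels half, I would invoke the dual statement of Proposition \ref{proposition.5}, which was explicitly noted in the paragraph following its proof: by considering Proposition \ref{proposition.1}, the left-module version of Proposition \ref{proposition.26} and the equivalence $\mod (\proj \Lambda)^{\op} \approx \mod \Lambda^{\op}$, one gets that $\proj \Lambda$ has $0$-cokernels if and only if $\Lambda$ is left semi-hereditary.

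Combining the two conditions, $\proj \Lambda$ has both $0$-kernels and $0$-cokernels if and only if $\Lambda$ is both right and left semi-hereditary, that is, if and only if $\Lambda$ is semi-hereditary. Applying Theorem \ref{theorem.3} then yields the desired equivalence. There is no real obstacle here; the theorem is a straightforward corollary of the previous proposition together with its dual and the characterization of $0$-abelianness in terms of $0$-kernels and $0$-cokernels.
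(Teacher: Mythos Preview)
Your proof is correct and matches the paper's approach exactly. The paper's proof explicitly offers two routes, the second of which is precisely yours: ``It also follows from Theorem \ref{theorem.3} and Proposition \ref{proposition.5} and its dual.''
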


\begin{proof}
Given our definition of a $0$-abelian category, the result follows from the equivalences $\mod (\proj \Lambda) \approx \mod \Lambda$ and $\mod (\proj \Lambda)^{\op} \approx \mod \Lambda^{\op}$ and Proposition \ref{proposition.26} and its left version. It also follows from Theorem \ref{theorem.3} and Proposition \ref{proposition.5} and its dual.
\end{proof}

Due to Theorem \ref{theorem.7}, we can now conclude that $0$-abelian categories with additive generators are in correspondence with semi-hereditary rings. The precise statement of this result is given below, which is the case $n = 0$ of \cite[Theorem 8.13]{2409.10438}.

\begin{theorem}\label{theorem.9}
There is a bijective correspondence between the equivalence classes of $0$-abelian categories with additive generators and the Morita equivalence classes of semi-hereditary rings. The correspondence is given as follows:
\begin{enumerate}
    \item[(a)] If $\C$ is a $0$-abelian category with an additive generator, then send it to $\End(X)$, the endomorphism ring of $X$, where $X$ is an additive generator of $\C$.
    \item[(b)] If $\Lambda$ is a semi-hereditary ring, then send it to $\proj \Lambda$.
\end{enumerate}
\end{theorem}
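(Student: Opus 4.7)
The plan is to reduce Theorem \ref{theorem.9} to Theorem \ref{theorem.7} together with standard Morita theory, which is the same strategy used to prove \cite[Theorem 8.13]{2409.10438} for $n$ positive. The key input is the evaluation equivalence $\proj \End(X) \approx \add X$ available for any object $X \in \C$, see \cite[Lemma 8.2 and Section 8]{2409.10438}. When $X$ is an additive generator of an additive and idempotent complete category $\C$, we have $\add X = \C$, so evaluation at $X$ yields an equivalence $\proj \End(X) \approx \C$. Combined with Theorem \ref{theorem.7}, this says that $\C$ is $0$-abelian if and only if $\End(X)$ is semi-hereditary, so the two assignments in the statement do take values in the claimed codomains.

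To show that the assignments descend to the respective equivalence classes, I would argue as follows. Given an equivalence $\C_1 \approx \C_2$ sending an additive generator $X_1$ of $\C_1$ to some $Y \in \C_2$, we obtain $\End(X_1) \simeq \End(Y)$ as rings, and $Y$ is itself an additive generator of $\C_2$. For any other additive generator $X_2$ of $\C_2$, the equality $\add Y = \C_2 = \add X_2$ yields an equivalence $\proj \End(Y) \approx \add Y = \add X_2 \approx \proj \End(X_2)$, hence $\End(Y)$ and $\End(X_2)$ are Morita equivalent. It follows that $\End(X_1)$ and $\End(X_2)$ belong to the same Morita equivalence class, so assignment (a) is well-defined on equivalence classes. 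For assignment (b), two rings are Morita equivalent if and only if their categories of finitely generated projective modules are equivalent, hence $\proj \Lambda_1 \approx \proj \Lambda_2$ whenever $\Lambda_1$ and $\Lambda_2$ are Morita equivalent.

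Finally, I would verify that the two assignments are mutually inverse. Starting from a $0$-abelian category $\C$ with additive generator $X$, the evaluation equivalence $\proj \End(X) \approx \C$ recalled above recovers $\C$ up to equivalence. Starting from a semi-hereditary ring $\Lambda$, the right $\Lambda$-module $\Lambda$ is an additive generator of $\proj \Lambda$, and $\End_{\proj \Lambda}(\Lambda) \simeq \Lambda$ as rings, so $\Lambda$ is recovered from $\proj \Lambda$. The main obstacle here is not a genuine difficulty but rather the bookkeeping surrounding the choice of additive generator, which is resolved entirely by invoking Morita equivalence in the manner indicated. Thus Theorem \ref{theorem.9} follows from Theorem \ref{theorem.7} and the evaluation equivalence $\proj \End(X) \approx \add X$.
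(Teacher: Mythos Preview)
Your proposal is correct and follows essentially the same approach as the paper, which simply cites \cite[Lemma 8.2]{2409.10438} (the equivalence $\proj \End(X) \approx \add X$ and the resulting correspondence between additive idempotent complete categories with additive generators and categories of the form $\proj \Lambda$) together with Theorem \ref{theorem.7}. You have spelled out in detail the Morita bookkeeping that the paper leaves implicit in that citation.
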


\begin{proof}
Follows from \cite[Lemma 8.2]{2409.10438} and Theorem \ref{theorem.7}.
\end{proof}

In particular, Theorem \ref{theorem.9} says that all examples of $0$-abelian categories with additive generators are of the form $\proj \Lambda$ with $\Lambda$ a semi-hereditary ring. For instance, we might take $\Lambda = \mathbb{Z}$, the ring of integers, and conclude that $\proj \mathbb{Z}$ is a $0$-abelian category. Explicitly, $\proj \mathbb{Z}$ is the subcategory of $\Ab$ whose objects are finite direct sums of copies of $\mathbb{Z}$ (including the empty direct sum, which gives the zero object). Observe that every nonzero morphism $\mathbb{Z} \to \mathbb{Z}$ is a bimorphism in $\proj \mathbb{Z}$, but not an isomorphism, unless $\mathbb{Z} \to \mathbb{Z}$ is the identity. Hence $\mathbb{Z}$ is neither injective nor projective in $\proj \mathbb{Z}$. Consequently, $\proj \mathbb{Z}$ is a $0$-abelian category with no nonzero injective object and no nonzero projective object.

Motivated by the above example, we explain next how to describe the injective and the projective objects in the category $\proj \Lambda$ when $\proj \Lambda$ is $0$-abelian. To do this, let us begin with the more general case when $\proj \Lambda$ is right or left comprehensive.

Recall from \cite{2409.10438} that $\Lambda$ is called \textit{right comprehensive}, \textit{left comprehensive} and \textit{comprehensive} when $\proj \Lambda$ is right comprehensive, left comprehensive and comprehensive, respectively. Since these definitions are not widely known, let us elaborate on them.

We know from \cite[Proposition 7.1]{2409.10438} that if $P \in \proj \Lambda$ is such that $(\proj \Lambda)(-,P)$ is injective in $\mod (\proj \Lambda)$, then $P$ is injective in $\proj \Lambda$. Therefore, by using the equivalence $\mod (\proj \Lambda) \approx \mod \Lambda$, we conclude that if $P \in \proj \Lambda$ is such that $P$ is injective in $\mod \Lambda$, then $P$ is injective in $\proj \Lambda$. One could then ask when the converse of this statement holds, in which case the injective objects of $\proj \Lambda$ coincide with the projective injective objects of $\mod \Lambda$. According to our definitions, the rings $\Lambda$ for which such converse holds are the right comprehensive rings, as we point out below.

\begin{proposition}\label{proposition.29}
A ring $\Lambda$ is right comprehensive if and only if every injective object in $\proj \Lambda$ is injective in $\mod \Lambda$.
\end{proposition}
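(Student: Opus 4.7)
The plan is to transport the definition of right comprehensiveness across the equivalence of categories $\mod (\proj \Lambda) \approx \mod \Lambda$ given by evaluation at $\Lambda$. The crucial observation is that this equivalence sends the representable functor $(\proj \Lambda)(-,P) \in \mod(\proj \Lambda)$ to the right $\Lambda$-module $(\proj \Lambda)(\Lambda,P) \simeq P$ for every $P \in \proj \Lambda$. This is simply the Yoneda-style identification that underlies the equivalence mentioned at the start of Section \ref{section.5}, and it has already been used implicitly in \cite[Section 8]{2409.10438}.

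Next, I would invoke the standard fact that any equivalence of abelian categories preserves and reflects injective objects. Combined with the previous paragraph, this yields the key reformulation: for $P \in \proj \Lambda$, the functor $(\proj \Lambda)(-,P)$ is injective in $\mod (\proj \Lambda)$ if and only if $P$ is injective in $\mod \Lambda$. Unwinding the definition of right comprehensive ring, which asserts that $(\proj \Lambda)(-,I)$ is injective in $\mod (\proj \Lambda)$ for every $I \in \proj \Lambda$ that is injective as an object of $\proj \Lambda$, the proposition then follows at once in both directions: the forward implication reads off by applying the reformulation to a given injective $I \in \proj \Lambda$, and the backward implication reads off by running the equivalence in reverse.

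There is no real obstacle here; the argument is essentially a translation along the equivalence. The one point worth stating explicitly (and which I would verify carefully when writing the final proof) is the identification $(\proj \Lambda)(\Lambda,P) \simeq P$ as a right $\Lambda$-module, together with the naturality of this identification, so that one is entitled to conclude that the whole subcategory of representables in $\mod (\proj \Lambda)$ corresponds, under evaluation at $\Lambda$, to $\proj \Lambda$ sitting inside $\mod \Lambda$ in the usual way.
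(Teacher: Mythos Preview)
Your proposal is correct and follows exactly the same approach as the paper, which simply writes ``Follows from the equivalence $\mod (\proj \Lambda) \approx \mod \Lambda$.'' You have merely spelled out in detail what that one-line proof means: the equivalence sends $(\proj\Lambda)(-,P)$ to $P$, preserves and reflects injectives, and unwinding the definition of right comprehensive then gives the result.
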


\begin{proof}
Follows from the equivalence $\mod (\proj \Lambda) \approx \mod \Lambda$.
\end{proof}

Before we give the dual discussion, let us first remark that, similarly to the contravariant functor $(-)^{\ast} : \mod \C \to \Mod \C^{\op}$, there is also a contravariant functor $(-)^{\ast} : \mod \Lambda \to \Mod \Lambda^{\op}$, which is given by $M^{\ast} = \Hom_{\Lambda}(M,\Lambda)$ for each $M \in \mod \Lambda$ and $f^{\ast} = \Hom_{\Lambda}(f,\Lambda)$ for each morphism $f$ in $\mod \Lambda$. We call $M^{\ast}$ the \textit{dual} of $M$. Note that the latter functor can be obtained from the former by taking $\C = \proj \Lambda$ and considering the evaluation functors at $\Lambda$. Likewise, the contravariant functor $(-)^{\ast} : \mod \C^{\op} \to \Mod \C$ induces a contravariant functor $(-)^{\ast} : \mod \Lambda^{\op} \to \Mod \Lambda$ when $\C = \proj \Lambda$. Finally, let us mention that, in the same way that there is a duality $(-)^{\ast} : \proj \C \leftrightarrow \proj \C^{\op}$ induced by the previous functors on $\C$-modules,\footnote{This is a consequence of the Yoneda lemma.} the corresponding functors for $\Lambda$-modules also induce a duality $(-)^{\ast} : \proj \Lambda \leftrightarrow \proj \Lambda^{\op}$.

Now, we know from the dual of \cite[Proposition 7.1]{2409.10438} that if $P \in \proj \Lambda$ is such that $(\proj \Lambda)(P,-)$ is injective in $\mod (\proj \Lambda)^{\op}$, then $P$ is projective in $\proj \Lambda$. Thus, by using the equivalence $\mod (\proj \Lambda)^{\op} \approx \mod \Lambda^{\op}$, we conclude that if $P \in \proj \Lambda$ is such that $P^{\ast}$ is injective in $\mod \Lambda^{\op}$, then $P$ is projective in $\proj \Lambda$. Moreover, as we indicate in the next proposition, the rings $\Lambda$ for which the converse of this statement holds are the left comprehensive rings. Consequently, when $\Lambda$ is left comprehensive, the projective objects of $\proj \Lambda$ are given by the finitely generated projective $\Lambda$-modules whose duals are injective in $\mod \Lambda^{\op}$, which can be identified with the projective injective objects of $\mod \Lambda^{\op}$ via the duality $(-)^{\ast} : \proj \Lambda \leftrightarrow \proj \Lambda^{\op}$.

\begin{proposition}\label{proposition.30}
A ring $\Lambda$ is left comprehensive if and only if the dual of every projective object in $\proj \Lambda$ is injective in $\mod \Lambda^{\op}$.
\end{proposition}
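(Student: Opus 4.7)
The plan is to mimic the proof of Proposition \ref{proposition.29}, transferring the injectivity statement through the equivalence $\mod (\proj \Lambda)^{\op} \approx \mod \Lambda^{\op}$ given by evaluation at $\Lambda$. Unpacking the definition, $\Lambda$ is left comprehensive if and only if $(\proj \Lambda)(P,-)$ is injective in $\mod (\proj \Lambda)^{\op}$ for every projective object $P \in \proj \Lambda$. Since equivalences of categories preserve and reflect injectives, the whole proposition will follow once I identify which object of $\mod \Lambda^{\op}$ corresponds to $(\proj \Lambda)(P,-)$ under the equivalence; the claim is that this object is precisely $P^{\ast}$.

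To carry out this identification, I would evaluate $(\proj \Lambda)(P,-)$ at $\Lambda$ to obtain $\Hom_{\Lambda}(P,\Lambda) = P^{\ast}$ as an abelian group. The induced $\Lambda^{\op}$-module structure comes from the action of $\End(\Lambda) \cong \Lambda$ by postcomposition: an element $\lambda \in \Lambda$ acts via the endomorphism of $\Lambda$ given by left multiplication by $\lambda$, and postcomposing this with $f \in P^{\ast}$ yields the map $p \mapsto \lambda f(p)$, which is the standard left $\Lambda$-action (equivalently, right $\Lambda^{\op}$-action) on $P^{\ast}$. Hence the equivalence indeed sends $(\proj \Lambda)(P,-)$ to $P^{\ast}$, and this identification is functorial in $P$.

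Combining these two steps, $(\proj \Lambda)(P,-)$ is injective in $\mod (\proj \Lambda)^{\op}$ if and only if $P^{\ast}$ is injective in $\mod \Lambda^{\op}$, and quantifying over all projective $P \in \proj \Lambda$ gives the claim. I do not anticipate a genuine obstacle; the only mildly delicate point is the routine check that the module structures match under the equivalence, and even this can be absorbed into a single appeal to the standard equivalence $\Mod (\proj \Lambda)^{\op} \approx \Mod \Lambda^{\op}$ already invoked in the paper, so that the proof can ultimately be stated as briefly as that of Proposition \ref{proposition.29}.
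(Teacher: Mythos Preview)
Your proposal is correct and follows exactly the same approach as the paper, which simply states that the result ``follows from the equivalence $\mod (\proj \Lambda)^{\op} \approx \mod \Lambda^{\op}$.'' You have merely unpacked this one-line appeal by making explicit that the evaluation functor sends $(\proj \Lambda)(P,-)$ to $P^{\ast}$ and that equivalences preserve and reflect injectives, which is precisely the content behind the paper's terse proof.
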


\begin{proof}
Follows from the equivalence $\mod (\proj \Lambda)^{\op} \approx \mod \Lambda^{\op}$.
\end{proof}

Given the above discussion around Propositions \ref{proposition.29} and \ref{proposition.30}, it is convenient to recall from \cite{MR258888} that a $\Lambda$-module $M$ is called \textit{FP-injective} if $\Ext_{\Lambda}^{1}(N,M) = 0$ for every finitely presented $\Lambda$-module $N$. Observe that such modules were also called ``absolutely pure'' in \cite{MR224649} and \cite{MR294409}. As we remarked in \cite[Section 8]{2409.10438}, if $\Lambda$ is right coherent, then an object in $\mod \Lambda$ is injective in $\mod \Lambda$ if and only if it is FP-injective. Hence we have:

\begin{proposition}\label{proposition.31}
Let $\Lambda$ be a ring that is right coherent and right comprehensive. Then the injective objects of $\proj \Lambda$ are precisely the finitely generated projective $\Lambda$-modules which are FP-injective.
\end{proposition}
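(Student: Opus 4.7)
The plan is to combine Proposition \ref{proposition.29} with the characterization of injective objects in $\mod \Lambda$ as FP-injective modules that is mentioned in the paragraph preceding the proposition (and attributed to Section 8 of the cited companion paper). Since both directions are essentially direct applications of results already recorded in the text, no serious obstacle is expected; the task is merely to verify that restriction of injectivity along the inclusion $\proj \Lambda \hookrightarrow \mod \Lambda$ behaves as one would hope.

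For the forward direction, I would start with an injective object $P \in \proj \Lambda$. By hypothesis $\Lambda$ is right comprehensive, so Proposition \ref{proposition.29} yields that $P$ is injective in $\mod \Lambda$. Since $\Lambda$ is right coherent, the standard characterization recalled just before the proposition gives that $P$ is FP-injective. Of course $P$ is automatically a finitely generated projective $\Lambda$-module, because $P \in \proj \Lambda$.

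For the converse, let $P$ be a finitely generated projective $\Lambda$-module that is FP-injective. Again using right coherence of $\Lambda$, the cited characterization shows that $P$ is injective as an object of $\mod \Lambda$. It remains to transfer this to injectivity in $\proj \Lambda$, which is the only point requiring a (very short) argument: given any monomorphism $f \in (\proj \Lambda)(X,Y)$ and any morphism $g \in (\proj \Lambda)(X,P)$, the map $f$ is still a monomorphism in the larger category $\mod \Lambda$ (since $\proj \Lambda$ is a full additive subcategory of $\mod \Lambda$), so the injectivity of $P$ in $\mod \Lambda$ produces an extension $h \in \Hom_{\Lambda}(Y,P)$ with $hf = g$, and this $h$ is automatically a morphism in $\proj \Lambda$ because both $Y$ and $P$ lie in $\proj \Lambda$. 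Hence $P$ is injective in $\proj \Lambda$, completing the proof.

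The only step that is slightly more than bookkeeping is the last transfer of injectivity from $\mod \Lambda$ back to $\proj \Lambda$, and this is really just the observation that a full additive subcategory inherits extension problems from its ambient category. Everything else is a direct appeal to Proposition \ref{proposition.29} and to the right-coherent identification between injectivity in $\mod \Lambda$ and FP-injectivity.
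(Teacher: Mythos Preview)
Your proof is correct and follows the same route as the paper: the forward direction via Proposition~\ref{proposition.29}, and the converse via the observation that injectivity in $\mod \Lambda$ restricts to $\proj \Lambda$ (which the paper records in the paragraph preceding Proposition~\ref{proposition.29}, appealing to \cite[Proposition~7.1]{2409.10438}, whereas you argue directly). One small correction: your parenthetical justification that a monomorphism in $\proj \Lambda$ remains a monomorphism in $\mod \Lambda$ ``since $\proj \Lambda$ is a full additive subcategory'' is not the right reason, as fullness alone does not guarantee this in general; the actual reason is that $\Lambda \in \proj \Lambda$ and $\Lambda$ generates $\mod \Lambda$, so testing monomorphy against $\Lambda$ already detects injectivity of the underlying map.
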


\begin{proof}
Follows from Proposition \ref{proposition.29} and the paragraph preceding it.
\end{proof}

Also, the dual of Proposition \ref{proposition.31} is given by:

\begin{proposition}\label{proposition.32}
Let $\Lambda$ be a ring that is left coherent and left comprehensive. Then the projective objects of $\proj \Lambda$ are precisely the finitely generated projective $\Lambda$-modules whose duals are FP-injective.
\end{proposition}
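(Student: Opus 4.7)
The plan is to dualize the proof of Proposition \ref{proposition.31} by passing to the opposite ring, exactly as the paper suggests with its pattern of ``dual'' results. First I would invoke Proposition \ref{proposition.30}: under the hypothesis that $\Lambda$ is left comprehensive, if $P \in \proj \Lambda$ is projective in $\proj \Lambda$, then $P^{\ast}$ is injective in $\mod \Lambda^{\op}$. For the reverse implication, I would appeal to the observation made in the paragraph between Propositions \ref{proposition.29} and \ref{proposition.30}, which, via the equivalence $\mod (\proj \Lambda)^{\op} \approx \mod \Lambda^{\op}$ together with the dual of \cite[Proposition 7.1]{2409.10438}, says that whenever $P \in \proj \Lambda$ satisfies that $P^{\ast}$ is injective in $\mod \Lambda^{\op}$, then $P$ is projective in $\proj \Lambda$. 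Combining these two implications yields that, for a left comprehensive $\Lambda$, a module $P \in \proj \Lambda$ is projective in $\proj \Lambda$ if and only if $P^{\ast}$ is injective in $\mod \Lambda^{\op}$.

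Next I would translate ``injective in $\mod \Lambda^{\op}$'' into ``FP-injective'' by applying, to the ring $\Lambda^{\op}$, the fact recalled just before Proposition \ref{proposition.31}: namely, that over a right coherent ring, an object of the category of finitely presented modules is injective precisely when it is FP-injective. Since $\Lambda$ is left coherent by hypothesis, $\Lambda^{\op}$ is right coherent, so this result applies and gives the equivalence between injectivity in $\mod \Lambda^{\op}$ and FP-injectivity for $\Lambda^{\op}$-modules, i.e., for left $\Lambda$-modules. Splicing this with the previous equivalence produces the statement of the proposition.

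There is no genuine obstacle in this proof; the entire content is a bookkeeping exercise of transporting the argument of Proposition \ref{proposition.31} across the duality $(-)^{\ast} : \proj \Lambda \leftrightarrow \proj \Lambda^{\op}$ and keeping track of which side (left or right) each coherence and comprehensiveness hypothesis is serving. Accordingly, the final write-up can be as terse as the proof of Proposition \ref{proposition.31}, simply reading: ``Follows from Proposition \ref{proposition.30} and the paragraph preceding Proposition \ref{proposition.31} applied to $\Lambda^{\op}$.''
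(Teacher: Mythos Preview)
Your proposal is correct and follows essentially the same approach as the paper: the paper's proof reads ``Follows from Proposition \ref{proposition.30} and the paragraph preceding it,'' and then notes the alternative of replacing $\Lambda$ by $\Lambda^{\op}$ in Proposition \ref{proposition.31} via the duality $(-)^{\ast} : \proj \Lambda \leftrightarrow \proj \Lambda^{\op}$ --- both of which you have identified. Your write-up is slightly more explicit than the paper's about the FP-injective translation step (invoking left coherence of $\Lambda$ so that $\Lambda^{\op}$ is right coherent), but this is a matter of presentation, not substance.
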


\begin{proof}
Follows from Proposition \ref{proposition.30} and the paragraph preceding it. Alternatively, we can deduce it by replacing $\Lambda$ by $\Lambda^{\op}$ in Proposition \ref{proposition.31} and by using the duality $(-)^{\ast} : \proj \Lambda \leftrightarrow \proj \Lambda^{\op}$.
\end{proof}

Now, let us recall our goal of describing the injective and the projective objects in a $0$-abelian category of the form $\proj \Lambda$. In this direction, we only need one more result:

\begin{proposition}\label{proposition.33}
Every semi-hereditary ring is comprehensive.
\end{proposition}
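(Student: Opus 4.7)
The plan is to reduce Proposition \ref{proposition.33} directly to two previously established results, namely Theorem \ref{theorem.7} and Proposition \ref{proposition.3}. The key observation is that ``comprehensive'' is defined for rings in terms of the category $\proj \Lambda$: by the paragraph preceding Proposition \ref{proposition.29}, a ring $\Lambda$ is comprehensive precisely when $\proj \Lambda$ is a comprehensive category. So the statement translates into showing that $\proj \Lambda$ is comprehensive whenever $\Lambda$ is semi-hereditary.

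First, I would invoke Theorem \ref{theorem.7}, which gives that $\proj \Lambda$ is a $0$-abelian category whenever $\Lambda$ is semi-hereditary. Then, applying Proposition \ref{proposition.3}, which states that every $0$-abelian category is comprehensive, I immediately conclude that $\proj \Lambda$ is both right and left comprehensive. Unraveling the definition of a comprehensive ring, this says that $\Lambda$ itself is comprehensive, which is the desired statement.

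Since the proof is essentially a direct combination of Theorem \ref{theorem.7} and Proposition \ref{proposition.3}, there is no real obstacle to overcome here beyond checking that the definitions line up. The only thing worth emphasizing, for the reader's benefit, is the chain of equivalences: right comprehensive for $\Lambda$ means $(\proj \Lambda)(-,I)$ is injective in $\mod(\proj \Lambda)$ for every injective $I \in \proj \Lambda$ (and the dual statement for left comprehensiveness), both of which are guaranteed by Proposition \ref{proposition.3} once we know $\proj \Lambda$ is $0$-abelian. Thus the proposition amounts to a short corollary of the two results, and the proof can be presented in a single sentence.
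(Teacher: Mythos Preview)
Your proposal is correct and matches the paper's own proof exactly: the paper simply states that the result follows from Theorem \ref{theorem.7} and Proposition \ref{proposition.3}, which is precisely the two-step reduction you outline.
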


\begin{proof}
Follows from Theorem \ref{theorem.7} and Proposition \ref{proposition.3}.
\end{proof}

To conclude, suppose that $\proj \Lambda$ is $0$-abelian. Since Theorem \ref{theorem.7} says that $\Lambda$ is semi-hereditary, we get from Proposition \ref{proposition.26} (and its left version) and Proposition \ref{proposition.33} that $\Lambda$ is coherent and comprehensive. Therefore, the injective and the projective objects of $\proj \Lambda$ are described by Propositions \ref{proposition.31} and \ref{proposition.32}. More generally, these results describe the injective and the projective objects of $\proj \Lambda$ whenever $\proj \Lambda$ is an $n$-abelian category, by \cite[Proposition 7.3 and Theorem 8.5]{2409.10438}.

It would be natural to ask next when a $0$-abelian category of the form $\proj \Lambda$ has enough injectives or projectives. Let us answer this question.

\begin{theorem}\label{theorem.8}
Let $\Lambda$ be a ring. The following are equivalent:
\begin{enumerate}
    \item[(a)] $\proj \Lambda$ is $0$-abelian and has enough injectives.
    \item[(b)] $\Lambda$ is coherent and $\gldim (\mod \Lambda) \leqslant 1 \leqslant \domdim (\mod \Lambda)$.
    \item[(c)] $\Lambda$ is semi-hereditary and, as a right $\Lambda$-module, it embeds into a finitely generated projective right $\Lambda$-module which is FP-injective.\footnote{That is, $\Lambda$ is semi-hereditary and there is a monomorphism of right $\Lambda$-modules from $\Lambda$ to a finitely generated projective FP-injective $\Lambda$-module. When $\Lambda$ is right noetherian, the latter condition is equivalent to the injective envelope of $\Lambda$ (viewed as a right $\Lambda$-module) being projective, which is also equivalent to $\Lambda$ being right artinian and ``right QF-3''. This follows from \cite[paragraph after Proposition 8.7]{2409.10438}, \cite[Lemma 1]{MR233854} and \cite[Theorem A]{MR268222}, see also \cite[Lemma 6.1 and Proposition 6.3]{MR349740}.}
\end{enumerate}
Moreover, if the above conditions are satisfied, then $\mod \Lambda$ has enough injectives.
\end{theorem}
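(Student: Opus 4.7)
The plan is to apply Theorem \ref{theorem.2} to $\C = \proj \Lambda$ and translate conditions using the equivalence $\mod (\proj \Lambda) \approx \mod \Lambda$ discussed at the start of Section \ref{section.5}. Since $\proj \Lambda$ is additive and idempotent complete, and since both the global and dominant dimensions are preserved under the aforementioned equivalence, Theorem \ref{theorem.2} yields (a) $\Leftrightarrow$ (b) immediately, along with the ``moreover'' claim that $\mod \Lambda$ has enough injectives (recall that $\Lambda$ being coherent means, by definition, that $\proj \Lambda$ is coherent).

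For the equivalence (b) $\Leftrightarrow$ (c), I would split (b) into the conjunction ``$\Lambda$ is coherent and $\gldim(\mod \Lambda) \leqslant 1$'' together with ``$\domdim(\mod \Lambda) \geqslant 1$''. The first piece is equivalent to $\Lambda$ being semi-hereditary by Proposition \ref{proposition.26} applied to both $\Lambda$ and $\Lambda^{\op}$. Hence it remains to show, under the blanket assumption that $\Lambda$ is semi-hereditary, that $\domdim(\mod \Lambda) \geqslant 1$ is equivalent to the existence of a monomorphism of right $\Lambda$-modules from $\Lambda$ into a finitely generated projective FP-injective right $\Lambda$-module.

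To carry this out, I would invoke Proposition \ref{proposition.4}(a) (transferred through $\mod (\proj \Lambda) \approx \mod \Lambda$) to rephrase ``$\domdim(\mod \Lambda) \geqslant 1$'' as ``$\proj \Lambda$ has enough injectives''. Since Proposition \ref{proposition.33} guarantees that the semi-hereditary ring $\Lambda$ is comprehensive, Proposition \ref{proposition.31} then identifies the injective objects of $\proj \Lambda$ with the finitely generated projective FP-injective right $\Lambda$-modules. The forward direction of the equivalence is then automatic, as $\Lambda$ itself is a finitely generated projective right $\Lambda$-module. For the converse, given an embedding $\iota : \Lambda \hookrightarrow P$ with $P$ finitely generated projective and FP-injective, any $M \in \proj \Lambda$ is a direct summand of some $\Lambda^{n}$, so composing the inclusion $M \hookrightarrow \Lambda^{n}$ with $\iota^{n}$ produces an embedding of $M$ into $P^{n}$, which is again finitely generated projective and FP-injective (finite direct sums preserve FP-injectivity, and direct sums of f.g.\ projectives are f.g.\ projective).

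The proof is largely a matter of bookkeeping between earlier results; the only delicate point is the direct-summand reduction in the last step, which is what allows condition (c) to check the embedding property only for the single module $\Lambda$ rather than for all finitely generated projective right $\Lambda$-modules at once.
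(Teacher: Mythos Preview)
Your proposal is correct and follows essentially the same route as the paper. The only cosmetic difference is in the argument for (b) $\Leftrightarrow$ (c): the paper stays inside $\mod \Lambda$ and invokes directly that, for right coherent $\Lambda$, injectivity in $\mod \Lambda$ coincides with FP-injectivity, whereas you pass back through $\proj \Lambda$ via Propositions \ref{proposition.4}, \ref{proposition.33} and \ref{proposition.31} and then reduce from all of $\proj \Lambda$ to the single module $\Lambda$ by the direct-summand trick; both arguments unpack the same content under the equivalence $\mod(\proj \Lambda) \approx \mod \Lambda$.
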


\begin{proof}
It follows from Theorem \ref{theorem.2} and the equivalence $\mod (\proj \Lambda) \approx \mod \Lambda$ that (a) and (b) are equivalent, and also that if these conditions are satisfied, then $\mod \Lambda$ has enough injectives. Furthermore, we can verify that (b) and (c) are equivalent by using Proposition \ref{proposition.26} (and its left version) and the fact that, when $\Lambda$ is right coherent, an object in $\mod \Lambda$ is injective in $\mod \Lambda$ if and only if it is FP-injective.
\end{proof}

By taking $\Lambda^{\op}$ in place of $\Lambda$ in Theorem \ref{theorem.8} and using the fact that $(\proj \Lambda)^{\op}$ is equivalent to $\proj \Lambda^{\op}$, we also conclude that the following are equivalent:
\begin{enumerate}
    \item[(a)] $\proj \Lambda$ is $0$-abelian and has enough projectives.
    \item[(b)] $\Lambda$ is coherent and $\gldim (\mod \Lambda^{\op}) \leqslant 1 \leqslant \domdim (\mod \Lambda^{\op})$.
    \item[(c)] $\Lambda$ is semi-hereditary and, as a left $\Lambda$-module, it embeds into a finitely generated projective left $\Lambda$-module which is FP-injective.\footnote{When $\Lambda$ is left noetherian, the latter condition occurs if and only if the injective envelope of $\Lambda$ (viewed as a left $\Lambda$-module) is projective if and only if $\Lambda$ is left artinian and ``left QF-3'', as in the previous footnote.}
\end{enumerate}
Moreover, $\mod \Lambda^{\op}$ has enough injectives when the above conditions are satisfied.

At this moment, the reader might want to see an example where the previous results can be applied. We present such an example below.

Fixed a positive integer $m$ and a field $K$, let $\Lambda_{m}$ be the ring of lower triangular $m \times m$ matrices with entries in $K$, which is isomorphic to the path algebra $K \mathbb{A}_{m}$ of the quiver $\mathbb{A}_{m}$ over $K$, see \cite{MR2197389}. Since $\Lambda_{m}$ is a finite dimensional $K$-algebra, it is both right and left artinian, hence the dominant dimensions of the categories $\mod \Lambda_{m}$ and $\mod ({\Lambda_{m}}^{\op})$ coincide, as we remarked in \cite[paragraph after Corollary 8.10]{2409.10438}. Moreover, their global dimensions also agree since $\Lambda_{m}$ is coherent, see \cite[Theorem A.5]{2409.10438}. Therefore, as \[ \gldim (\mod \Lambda_{m}) \leqslant 1 \leqslant \domdim (\mod \Lambda_{m}), \] we conclude from Theorem \ref{theorem.8} and its dual that $\proj \Lambda_{m}$ is a $0$-abelian category with enough injectives and enough projectives. In particular, $\Lambda_{m}$ is comprehensive, by Proposition \ref{proposition.3}. Let us describe the objects of this category. For each $1 \leqslant i \leqslant m$, let $E_{ii} \in \Lambda_{m}$ be the matrix whose only nonzero entry is $1$ at the $i$\textsuperscript{th} row and $i$\textsuperscript{th} column, and consider $P_{i} = E_{ii} \Lambda_{m}$, which is an  indecomposable $\Lambda_{m}$-module. Since there is a direct sum decomposition $\Lambda_{m} \simeq P_{1} \oplus \cdots \oplus P_{m}$, we conclude that $\proj \Lambda_{m}$ is the subcategory of $\mod \Lambda_{m}$ given by finite direct sums of copies of $P_{1}, \ldots, P_{m}$. Now, because $P_{m}$ is the only injective $\Lambda_{m}$-module among $P_{1}, \ldots, P_{m}$, it follows from Proposition \ref{proposition.31} that the subcategory of injective objects of $\proj \Lambda_{m}$ is given by $\add P_{m}$. On the other hand, as $P_{1}$ is the only $\Lambda_{m}$-module among $P_{1}, \ldots, P_{m}$ whose dual is injective, we obtain from Proposition \ref{proposition.32} that the subcategory of projective objects of $\proj \Lambda_{m}$ is given by $\add P_{1}$.

We can continue the discussion on the above example a bit further. In fact, recall from Corollary \ref{corollary.1} that every subcategory of a $0$-abelian category which is closed under finite direct sums and direct summands is $0$-abelian. So, we could ask what kind of $0$-abelian categories are obtained this way from the $0$-abelian category $\proj \Lambda_{m}$ given above. Well, keeping the same notation as before, note that every nonzero subcategory of $\proj \Lambda_{m}$ that is closed under finite direct sums and direct summands is of the form $\add X_{k}$, where $X_{k} = P_{\alpha_{1}} \oplus \cdots \oplus P_{\alpha_{k}}$ for some integer $1 \leqslant k \leqslant m$ and some integers $1 \leqslant \alpha_{1} < \cdots < \alpha_{k} \leqslant m$. However, such a subcategory is equivalent to $\proj \End (X_{k})$, see \cite[Section 8]{2409.10438}, and it is easy to see that the endomorphism ring $\End (X_{k})$ is isomorphic to $\Lambda_{k}$, hence $\add X_{k}$ is equivalent to $\proj \Lambda_{k}$. Now, in another direction, observe that, by Proposition \ref{proposition.25} and its dual statement, $\add X$ is contravariantly and covariantly finite in $\proj \Lambda_{m}$ for every $X \in \proj \Lambda_{m}$. Therefore, it follows from Corollary \ref{corollary.6} and its dual result that both the injectively and the projectively stable categories of $\proj \Lambda_{m}$ are $0$-abelian. Let us describe these categories. As we saw before, the subcategory of injective objects of $\proj \Lambda_{m}$ is $\add P_{m}$, while the subcategory of projective objects is $\add P_{1}$. If $m = 1$, the stable categories of $\proj \Lambda_{m}$ are just zero, so assume that $m \geqslant 2$. It is not difficult to verify that $\proj \Lambda_{m} / \langle \add P_{m} \rangle$ is equivalent to $\add (P_{1} \oplus \cdots \oplus P_{m-1})$ and that $\proj \Lambda_{m} / \langle \add P_{1} \rangle$ is equivalent to $\add (P_{2} \oplus \cdots \oplus P_{m})$. Hence we conclude from the previous description of $\add X_{k}$ that the injectively and the projectively stable categories of $\proj \Lambda_{m}$ are both equivalent to $\proj \Lambda_{m-1}$.

\subsection{Applications to semi-hereditary rings}

We now present a few applications of the theory of $0$-abelian categories to semi-hereditary rings. Although the results that we present in this subsection should be well known, our proofs seem to be new.

Below, we show a result that was independently proved by Lenzing in \cite[Satz 1]{MR280520} and by Colby and Rutter in \cite[Theorem 2.10]{MR269686}, see also \cite[Corollary 3]{MR859386}.

\begin{proposition}\label{proposition.27}
If $\Lambda$ is a right semi-hereditary ring, then the endomorphism ring of every finitely generated projective right $\Lambda$-module is right semi-hereditary.
\end{proposition}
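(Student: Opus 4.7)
The plan is to reduce the statement to Proposition \ref{proposition.5} by passing to the subcategory $\add P \subseteq \proj \Lambda$, where $P$ is the given finitely generated projective right $\Lambda$-module. Since $\Lambda$ is right semi-hereditary, Proposition \ref{proposition.5} tells us that $\proj \Lambda$ has $0$-kernels. The strategy is to show that $\add P$ inherits $0$-kernels from $\proj \Lambda$, and then use the equivalence $\add P \approx \proj \End(P)$ (as noted in the proof of Corollary \ref{corollary.2}, which refers to \cite[Section 8]{2409.10438}) to transport this property to $\proj \End(P)$, whence another application of Proposition \ref{proposition.5}, this time to the ring $\End(P)$, yields the conclusion.

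The key verification is that $0$-kernels descend to $\add P$. This is essentially the argument already used in Corollary \ref{corollary.1}, but it is worth spelling out why closure under direct summands (rather than $0$-abelianness of the ambient category) is what is needed here. Given a morphism $f \in (\add P)(X,Y)$, the existence of a $0$-kernel of $f$ in $\proj \Lambda$ yields a decomposition $f = gh$ in $\proj \Lambda$ with $h$ a split epimorphism and $g$ a monomorphism. Writing $h \in (\proj \Lambda)(X,Z)$, the object $Z$ is a direct summand of $X \in \add P$, and hence $Z \in \add P$. Since subcategories are full, $g$ and $h$ belong to $\add P$, and a splitting of $h$ in $\proj \Lambda$ also belongs to $\add P$ for the same reason; thus $g$ is a $0$-kernel of $f$ in $\add P$.

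Having established that $\add P$ has $0$-kernels, the equivalence $\add P \approx \proj \End(P)$ transports this property across, so $\proj \End(P)$ has $0$-kernels, and Proposition \ref{proposition.5} (applied to $\End(P)$ in place of $\Lambda$) then yields that $\End(P)$ is right semi-hereditary. I expect no significant obstacle in this argument: the only point requiring slight care is the inheritance of $0$-kernels by $\add P$, but this is a straightforward adaptation of the observation made for $0$-abelianness in Corollary \ref{corollary.1}, relying crucially on the fact that the intermediate object $Z$ in the decomposition $f = gh$ lands in $\add P$ because $\add P$ is closed under direct summands.
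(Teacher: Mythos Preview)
Your proposal is correct and follows essentially the same route as the paper's own proof: use Proposition~\ref{proposition.5} to get $0$-kernels in $\proj \Lambda$, observe that $\add P$ inherits $0$-kernels because it is closed under direct summands (as in the proof of Corollary~\ref{corollary.1}), transport along the equivalence $\add P \approx \proj \End(P)$, and apply Proposition~\ref{proposition.5} again. Your explicit verification that the intermediate object $Z$ lands in $\add P$ is exactly the point the paper calls ``clear''.
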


\begin{proof}
Follows from Proposition \ref{proposition.5} and the proofs of Corollaries \ref{corollary.1} and \ref{corollary.2}. However, for the convenience of the reader, let us write down the full argument.

Suppose that $\Lambda$ is a right semi-hereditary ring. Let $P$ be a finitely generated projective right $\Lambda$-module, and consider the subcategory $\add P$ of $\proj \Lambda$. Observe that, by Proposition \ref{proposition.5}, $\proj \Lambda$ has $0$-kernels, and because $\add P$ is closed under direct summands in $\proj \Lambda$, it is clear that a $0$-kernel of a morphism in $\add P$ taken in $\proj \Lambda$ is also a $0$-kernel of it in $\add P$, so that $\add P$ has $0$-kernels. Furthermore, recall that, if $\End (P)$ is the endomorphism ring of $P$, then the categories $\add P$ and $\proj \End (P)$ are equivalent, see \cite[Section 8]{2409.10438}. Therefore, $\proj \End (P)$ has $0$-kernels, and we conclude from Proposition \ref{proposition.5} that $\End (P)$ is right semi-hereditary.
\end{proof}

Note that, by taking $\Lambda^{\op}$ in place of $\Lambda$ in Proposition \ref{proposition.27}, we obtain that if $\Lambda$ is left semi-hereditary, then the endomorphism ring of every finitely generated projective left $\Lambda$-module is right semi-hereditary. In addition to this result, we also have:

\begin{proposition}\label{proposition.28}
If $\Lambda$ is a left semi-hereditary ring, then the endomorphism ring of every finitely generated projective right $\Lambda$-module is left semi-hereditary.
\end{proposition}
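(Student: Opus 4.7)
The plan is to mirror the proof of Proposition \ref{proposition.27} almost verbatim, but work with $0$-cokernels instead of $0$-kernels, using the dual of Proposition \ref{proposition.5}. Recall that this dual asserts that $\proj \Lambda$ has $0$-cokernels if and only if $\Lambda$ is left semi-hereditary. So, assuming $\Lambda$ is left semi-hereditary, I first record that $\proj \Lambda$ has $0$-cokernels.

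Next, let $P$ be a finitely generated projective right $\Lambda$-module and consider the subcategory $\add P$ of $\proj \Lambda$. Since $\add P$ is closed under finite direct sums and direct summands, a $0$-cokernel in $\proj \Lambda$ of a morphism between objects of $\add P$ (which by definition factors that morphism as a split monomorphism following an epimorphism) automatically lies in $\add P$: the split monomorphism target is a direct summand of an object of $\add P$, hence in $\add P$. Therefore $\add P$ has $0$-cokernels.

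Finally, I invoke the equivalence $\add P \approx \proj \End(P)$ recalled in \cite[Section 8]{2409.10438}, where $\End(P)$ denotes the endomorphism ring of $P$ in $\mod \Lambda$. Transporting the property of having $0$-cokernels across this equivalence, I conclude that $\proj \End(P)$ has $0$-cokernels. Applying the dual of Proposition \ref{proposition.5} once more, now to the ring $\End(P)$, yields that $\End(P)$ is left semi-hereditary, which is the desired conclusion.

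There is essentially no obstacle here; the only substantive point is the verification that $\add P$ inherits $0$-cokernels from $\proj \Lambda$, and this is identical to the corresponding verification in the proof of Proposition \ref{proposition.27}, simply dualized. Thus the same one-paragraph argument used there suffices, with "right" and "$0$-kernels" systematically replaced by "left" and "$0$-cokernels".
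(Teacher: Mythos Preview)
Your proposal is correct and follows exactly the approach the paper takes: the paper's proof simply says to mirror the proof of Proposition \ref{proposition.27}, replacing Proposition \ref{proposition.5} by its dual, which is precisely what you do. The only minor slip is terminological---you write ``the split monomorphism target'' when you mean its domain (equivalently, the codomain of the epimorphism)---but the argument is sound.
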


\begin{proof}
This is similar to the proof of Proposition \ref{proposition.27}, the only difference being that, instead of Proposition \ref{proposition.5}, we must use its dual result.
\end{proof}

As before, observe that, by replacing $\Lambda$ by $\Lambda^{\op}$ in Proposition \ref{proposition.28}, we conclude that if $\Lambda$ is right semi-hereditary, then the endomorphism ring of  every finitely generated projective left $\Lambda$-module is left semi-hereditary.

\begin{corollary}\label{corollary.8}
Let $\Lambda$ be a right coherent ring. Then $\Lambda$ semi-hereditary if and only if every $M \in \mod \Lambda$ has a decomposition $M \simeq N \oplus P$ in $\mod \Lambda$, where $N \in \mod \Lambda$ is such that $\Hom_{\Lambda}(N,\Lambda) = 0$ and $P \in \proj \Lambda$.
\end{corollary}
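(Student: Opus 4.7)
The plan is to reduce the corollary to Proposition \ref{proposition.13} applied to $\C = \proj \Lambda$, via the equivalence $\mod (\proj \Lambda) \approx \mod \Lambda$ given by evaluation at $\Lambda$, together with Theorem \ref{theorem.7}. Setting $\C = \proj \Lambda$, right coherence of $\Lambda$ transfers to $\C$, and Theorem \ref{theorem.7} already tells us that $\Lambda$ is semi-hereditary if and only if $\C$ is $0$-abelian. So all I need to show is that the module-theoretic decomposition in the corollary corresponds, under the equivalence, to one of the equivalent conditions of Proposition \ref{proposition.13}.

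The first step is to translate the two ingredients of the decomposition. Projective objects of $\mod \C$ correspond exactly to the finitely generated projective right $\Lambda$-modules, since $\C(-, Z)$ evaluates to $Z$ at $\Lambda$. For the effaceable condition, I would note that if $F \in \mod \C$ corresponds to $N \in \mod \Lambda$, then full-faithfulness of the equivalence yields $\Hom_{\mod \C}(F, \C(-, Z)) \simeq \Hom_\Lambda(N, Z)$, so $F \in \eff \C$ is equivalent to $\Hom_\Lambda(N, Z) = 0$ for all $Z \in \proj \Lambda$. Since $\proj \Lambda = \add \Lambda$, this simplifies to the single condition $\Hom_\Lambda(N, \Lambda) = 0$.

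With these identifications in hand, a decomposition $M \simeq N \oplus P$ in $\mod \Lambda$ with $\Hom_\Lambda(N, \Lambda) = 0$ and $P \in \proj \Lambda$ corresponds to a split short exact sequence $0 \to F_e \to F \to \C(-, Z) \to 0$ in $\mod \C$ with $F_e \in \eff \C$ and $Z \in \C$; and any such short exact sequence automatically splits since $\C(-, Z)$ is projective, so the two conditions are literally the same statement. Thus the module-theoretic property in the corollary is exactly condition (b) of Proposition \ref{proposition.13}, which, under right coherence of $\C$, is equivalent to condition (e) that $\C$ be $0$-abelian, closing the loop with Theorem \ref{theorem.7}.

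The main subtle point I expect is the effaceability translation, specifically verifying that $\Hom_\Lambda(N, \Lambda) = 0$ truly captures vanishing against all of $\proj \Lambda$ and not merely against $\Lambda$ itself. This reduces to the elementary observation that $\Hom_\Lambda(N, -)$ preserves finite direct sums and direct summands, so vanishing on $\Lambda$ propagates to vanishing on every $Z \in \add \Lambda = \proj \Lambda$. Once this is observed, the rest of the argument is a direct application of results already available in the paper, with no further calculation required.
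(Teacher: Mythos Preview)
Your proposal is correct and follows essentially the same approach as the paper: the paper's proof is the one-line ``Follows from Theorem \ref{theorem.7}, Proposition \ref{proposition.13} and the equivalence $\mod (\proj \Lambda) \approx \mod \Lambda$'', together with a footnote recording precisely your effaceability translation $F \in \eff(\proj \Lambda) \Leftrightarrow \Hom_{\Lambda}(F(\Lambda),\Lambda) = 0$. Your write-up simply unpacks these citations in detail, including the observation that the short exact sequence in condition (b) of Proposition \ref{proposition.13} splits automatically, which is exactly what is needed to match the direct-sum formulation of the corollary.
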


\begin{proof}
Follows from Theorem \ref{theorem.7}, Proposition \ref{proposition.13} and the equivalence $\mod (\proj \Lambda) \approx \mod \Lambda$.\footnote{Note that, for $F \in \mod (\proj \Lambda)$, we have $F \in \eff (\proj \Lambda)$ if and only if $\Hom_{\Lambda}(F(\Lambda),\Lambda) = 0$.}
\end{proof}

Observe that, by taking $\Lambda^{\op}$ in place of $\Lambda$ in Corollary \ref{corollary.8}, we obtain the corresponding result for left $\Lambda$-modules. We leave its precise statement to the reader.

\appendix

\section{More on \texorpdfstring{$0$}{0}-kernels and \texorpdfstring{$0$}{0}-cokernels}\label{section.6}

In this short appendix, we present a couple of basic results concerning $0$-kernels and $0$-cokernels in arbitrary categories, which were mentioned in Subsection \ref{subsection.2.1}. Further basic results will be discussed in the upcoming paper \cite{0abelian}.

\begin{proposition}\label{proposition.2}
Let $\B$ be an arbitrary category. The following are equivalent:
\begin{enumerate}
    \item[(a)] $\B$ is idempotent complete.
    \item[(b)] Every idempotent in $\B$ has a $0$-kernel.
    \item[(c)] Every idempotent in $\B$ has a $0$-cokernel.
\end{enumerate}
\end{proposition}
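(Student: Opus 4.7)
The plan is to show that (a) implies both (b) and (c), and then separately prove (b) implies (a) and (c) implies (a). This way, the equivalence of all three conditions follows.

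First I would handle the implications (a) $\Rightarrow$ (b) and (a) $\Rightarrow$ (c) together. Suppose $\B$ is idempotent complete, and let $e \in \B(X,X)$ be an idempotent. Then $e$ splits, so there exist morphisms $i \in \B(Z,X)$ and $r \in \B(X,Z)$ such that $e = ir$ and $ri = 1_{Z}$. In particular, $i$ is a split monomorphism (hence a monomorphism) and $r$ is a split epimorphism (hence an epimorphism). The decomposition $e = i r$ then shows directly that $i$ is a $0$-kernel of $e$ and $r$ is a $0$-cokernel of $e$.

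Next I would prove (b) $\Rightarrow$ (a). Assume every idempotent in $\B$ has a $0$-kernel, and let $e \in \B(X,X)$ be an idempotent. Take a $0$-kernel $g \in \B(Z,X)$ of $e$, so that there is a decomposition $e = gh$ in $\B$ with $g$ a monomorphism and $h \in \B(X,Z)$ a split epimorphism. The key manipulation is to use $e^{2} = e$: from $ghgh = gh$ and the fact that $g$ is a monomorphism (left-cancellable), we obtain $hgh = h$; then, using that $h$ is an epimorphism (right-cancellable), we conclude $hg = 1_{Z}$. Combined with $e = gh$, this exhibits $e$ as a splitting, so $\B$ is idempotent complete. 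The implication (c) $\Rightarrow$ (a) is entirely dual: start from a $0$-cokernel $s$ of $e$ with $e = rs$, $r$ a split monomorphism, $s$ an epimorphism, then from $e^{2} = e$ cancel $s$ on the right to get $rsr = r$, and cancel $r$ (a monomorphism) on the left to get $sr = 1$.

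I do not expect a genuine obstacle here. The only point that requires a little care is bookkeeping: making sure that in each implication the correct morphism is identified as the mono/epi and as the split epi/mono, and that the cancellations are applied on the appropriate side. The argument for (b) $\Rightarrow$ (a) pleasantly mirrors the footnote in the proof of Proposition \ref{proposition.1}, where the same kind of cancellation was used to show that kernels in a category with $0$-kernels are automatically split monomorphisms.
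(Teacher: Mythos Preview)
Your proof is correct and follows essentially the same approach as the paper's own argument: both directions use the splitting $e = gh$ with $hg = 1$ for (a) $\Rightarrow$ (b),(c), and for (b) $\Rightarrow$ (a) both cancel $g$ and $h$ from $ghgh = gh$ to obtain $hg = 1$, with the dual argument for (c). You spell out the intermediate step $hgh = h$ and the dual case explicitly, but the strategy is identical.
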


\begin{proof}
Let $f$ be a morphism in $\B$ which is idempotent, that is, $f^{2} = f$.

If $\B$ is idempotent complete, then $f$ can be written as $f = gh$, where $g$ and $h$ are morphisms in $\B$ such that $hg = 1$. In this case, $g$ is a $0$-kernel of $f$ and $h$ is a $0$-cokernel of $f$. Therefore, item (a) implies both (b) and (c).

If $f$ has a $0$-kernel, then there is a decomposition $f = gh$ in $\B$, where $h$ is a split epimorphism and $g$ is a monomorphism. Thus, from $ghgh = gh$, we obtain that $hg = 1$. Consequently, item (b) implies (a). Similarly, (c) implies (a).
\end{proof}

Perhaps, Proposition \ref{proposition.2} sounds familiar to the reader, as it is well known that a preadditive category is idempotent complete if and only if every idempotent in it has a kernel if and only if every idempotent in it has a cokernel. However, note that, in Proposition \ref{proposition.2}, it is not necessary to assume that the category is preadditive.

Next, we present a description of $0$-kernels in terms of a universal property, which is in some sense analogous to the universal property of kernels. By duality, a similar description of $0$-cokernels also holds, as we point out below.

\begin{proposition}\label{proposition.10}
Let $\B$ be an arbitrary category, and let $f \in \B(X,Y)$ and $g \in \B(Z,Y)$ be morphisms in $\B$. Then $g$ is a $0$-kernel of $f$ if and only if it satisfies the following conditions:
\begin{enumerate}
    \item[(a)] There is some morphism $g' \in \B(Z,X)$ such that $g = fg'$.
    \item[(b)] If $w \in \B(W,Y)$ is a morphism for which there is some $w' \in \B(W,X)$ such that $w = fw'$, then there is a unique morphism $u \in \B(W,Z)$ satisfying $w = gu$.
\end{enumerate}
\[ \begin{tikzcd}
                   & W \arrow[rdd, "w", bend left] \arrow[ldd, "w'"', dotted, bend right] \arrow[d, "u"', dashed] &   \\
                   & Z \arrow[rd, "g"] \arrow[ld, "g'"', dotted]                                                  &   \\
X \arrow[rr, "f"'] &                                                                                              & Y
\end{tikzcd} \]
\end{proposition}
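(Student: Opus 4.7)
The plan is to prove the biconditional by direct verification in both directions, relying only on the definition of a $0$-kernel (a monomorphism $g$ with $f = gh$ for some split epimorphism $h$) and elementary arrow chasing. The universal property is essentially a repackaging of the definition, so no heavy machinery should be needed.

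For the forward direction, I would start with a $0$-kernel $g \in \B(Z,Y)$ of $f$, so that $f = gh$ with $g$ a monomorphism and $h \in \B(X,Z)$ a split epimorphism, and fix a section $s \in \B(Z,X)$ with $hs = 1_{Z}$. To obtain condition (a), set $g' := s$ and compute $fg' = ghs = g$. To obtain condition (b), given $w \in \B(W,Y)$ with $w = fw'$ for some $w' \in \B(W,X)$, set $u := hw'$ and compute $gu = ghw' = fw' = w$; uniqueness of $u$ follows immediately from $g$ being a monomorphism.

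For the converse, the most important observation is the order in which facts must be extracted from (a) and (b). First, I would apply (b) to the pair $w := f$ and $w' := 1_{X}$ to produce a (unique) morphism $h \in \B(X,Z)$ with $f = gh$. Next, I would verify that $g$ is a monomorphism: if $gu_{1} = gu_{2} =: w$, then by (a) we may take $w' := g'u_{1}$, since $fw' = fg'u_{1} = gu_{1} = w$; the uniqueness clause in (b) applied to this $w$ then forces $u_{1} = u_{2}$. Finally, to see that $h$ is a split epimorphism, I would combine (a) with the factorization just obtained: $g = fg' = ghg'$, and since $g$ is now known to be a monomorphism, $hg' = 1_{Z}$, exhibiting $g'$ as a section of $h$.

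I do not anticipate a genuine obstacle; the arguments are short diagram chases. The only subtlety worth flagging is the need to prove $g$ is a monomorphism \emph{before} using that fact to cancel $g$ from $g = ghg'$ when verifying that $h$ splits. This forces the step that deduces $g$ being a monomorphism from the \emph{uniqueness} clause of (b) (via the section-like morphism $g'$ from (a)) to come strictly between the extraction of $h$ and the verification that $h$ is a split epimorphism.
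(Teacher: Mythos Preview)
Your proposal is correct and follows essentially the same approach as the paper's proof: both directions are handled by the same elementary arrow chases, using a section of $h$ for condition (a), $u := hw'$ for condition (b), and in the converse deducing that $g$ is a monomorphism from the uniqueness clause of (b) together with $g = fg'$, then cancelling $g$ from $g = ghg'$ to see that $h$ splits. The only cosmetic difference is that the paper proves $g$ is a monomorphism before extracting $h$ rather than after, so your remark that the mono step must come ``strictly between'' those two is a bit too restrictive, but this does not affect the argument.
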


\begin{proof}
Suppose that $g$ is a $0$-kernel of $f$. Then $g$ is a monomorphism and there is a split epimorphism $h \in \B(X,Z)$ for which $f = gh$. Let $h' \in \B(Z,X)$ be such that $hh' = 1$. Then $g = fh'$, and condition (a) is satisfied. Next, let $w \in \B(W,Y)$ and $w' \in \B(W,X)$ be such that $w = fw'$. Then $w = ghw'$, and if $u \in \B(W,Z)$ is possibly another morphism satisfying $w = gu$, then it follows that $u = hw'$ as $g$ is a monomorphism. Hence condition (b) also holds.

Conversely, assume that $g$ satisfies conditions (a) and (b), so that there is a morphism $g' \in \B(Z,X)$ such that $g = fg'$. First, we will prove that $g$ is a monomorphism. Let $v,w \in \B(V,Z)$ be morphisms in $\B$ such that $gv = gw$. Then $gv = fg'v$, which implies that there is a unique morphism $u \in \B(V,Z)$ for which $gv = gu$. Thus, as $gv = gw$, we get that $v = w$. Finally, because $f = f 1$, it follows from condition (b) that there is a unique morphism $h \in \B(X,Z)$ satisfying $f = gh$. In this case, as $g = fg'$, we obtain that $g = ghg'$, which implies that $hg' = 1$ since $g$ is a monomorphism. Therefore, $h$ is a split epimorphism, and we conclude that $g$ is a $0$-kernel of $f$.
\end{proof}

Briefly, Proposition \ref{proposition.10} says that a $0$-kernel of a morphism $f$ is a morphism with the same codomain which factors through $f$ and is universal among all such morphisms. Since $0$-kernels and $0$-cokernels are concepts that are dual to each other, we can take opposite categories in Proposition \ref{proposition.10} to conclude a similar result for $0$-cokernels. In summary, it says that a $0$-cokernel of a morphism $f$ is a morphism with the same domain that factors through $f$ and is universal among all such morphisms.

\section{Relation to \texorpdfstring{$0$}{0}-Auslander categories}\label{section.7}

In this appendix, we point out how the concept of a $0$-abelian category relates to the notion of a \textit{$0$-Auslander extriangulated category}, which was recently introduced by Gorsky, Nakaoka and Palu in \cite[Definition 3.7]{Gorsky-Nakaoka-Palu}, see also \cite[Definition 3.1]{FGPPP}. We refer the reader to \cite{MR3931945} for the definition of an \textit{extriangulated category}.

Let us define a \textit{$0$-Auslander abelian category} to be an abelian category which is a $0$-Auslander extriangulated category when considered with its standard (abelian) extriangulated structure. In explicit terms, a category $\A$ is a $0$-Auslander abelian category if and only if $\A$ is an abelian category with enough projectives such that $\gldim \A \leqslant 1 \leqslant \domdim \A$. In the next two propositions, we show how such categories are related to $0$-abelian categories. We begin with:

\begin{proposition}\label{proposition.34}
There is a bijective correspondence between the equivalence classes of $0$-abelian categories with enough injectives and the equivalence classes of $0$-Auslander abelian categories for which their subcategories of projective objects are covariantly finite.\footnote{By the ``equivalence class of a category'', we mean the class of categories that are equivalent to it.} The correspondence is given as follows:
\begin{enumerate}
    \item[(a)] If $\C$ is a $0$-abelian category with enough injectives, then send it to $\mod \C$.
    \item[(b)] If $\A$ is a $0$-Auslander abelian category such that its subcategory $\Projectives (\A)$ of projective objects is covariantly finite in $\A$, then send it to $\Projectives (\A)$.
\end{enumerate}
\end{proposition}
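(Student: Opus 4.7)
The plan is to show that both assignments are well-defined and mutually inverse. For the direction $\C \mapsto \mod \C$, suppose $\C$ is a $0$-abelian category with enough injectives. Theorem \ref{theorem.2} immediately gives that $\mod \C$ is abelian with $\gldim(\mod \C) \leq 1 \leq \domdim(\mod \C)$, and $\mod \C$ has enough projectives since every $F \in \mod \C$ admits a presentation $\C(-,X) \to \C(-,Y) \to F \to 0$ by representables. Hence $\mod \C$ is a $0$-Auslander abelian category. To verify that $\Projectives(\mod \C)$ is covariantly finite in $\mod \C$, I would use Proposition \ref{proposition.13}(f) to decompose any $F \in \mod \C$ as $F \simeq \mr(f) \oplus \C(-,Z)$ with $f$ a bimorphism. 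Since $f$ is in particular an epimorphism, Proposition \ref{proposition.6} gives $\mr(f)^{\ast} = 0$, whence $\Hom(\mr(f), \C(-,W)) = 0$ for every $W \in \C$. As $\C$ is idempotent complete, every projective in $\mod \C$ is representable (by Yoneda), so the second projection $F \twoheadrightarrow \C(-,Z)$ is a left $\Projectives(\mod \C)$-approximation of $F$.

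For the direction $\A \mapsto \Projectives(\A)$, let $\A$ be a $0$-Auslander abelian category with $\Projectives(\A)$ covariantly finite in $\A$, and put $\C := \Projectives(\A)$, which is additive and idempotent complete. The backbone of this direction is the restricted Yoneda functor $\A \to \mod \C$, $X \mapsto \A(-, X)|_{\C}$: because $\A$ has enough projectives with $\gldim \A \leq 1$, every $X \in \A$ admits a projective presentation $P_{1} \to P_{0} \to X \to 0$ whose image in $\Mod \C$ lies in $\mod \C$ (and forms a projective presentation there); standard projective-resolution arguments combined with Yoneda's lemma then show the functor is fully faithful and essentially surjective, hence an equivalence. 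Next, I would produce $0$-kernels in $\C$: given $f \in \C(P,Q)$, factor it in $\A$ through $I := \Image(f)$; since $I$ is a subobject of the projective $Q$, the short exact sequence $0 \to I \to Q \to Q/I \to 0$ together with $\gldim \A \leq 1$ forces $I$ to be projective by dimension shifting. Thus $I \in \C$, and the epimorphism $P \twoheadrightarrow I$ onto the projective $I$ splits, giving the required factorization $f = (I \hookrightarrow Q) \circ (P \twoheadrightarrow I)$ in $\C$. For weak cokernels in $\C$: compose the cokernel $Q \to C$ of $f$ in $\A$ with the left $\C$-approximation $C \to L$ granted by covariant finiteness; a routine check shows that $Q \to L$ is a weak cokernel of $f$ in $\C$. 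By the dual of \cite[Proposition 3.1]{2409.10438}, $\C$ is left coherent. Combined with right coherence (from the equivalence $\mod \C \simeq \A$ being abelian) and \cite[Theorem A.5]{2409.10438}, this yields $\gldim(\mod \C^{\op}) = \gldim(\mod \C) \leq 1$, so $\C$ is $0$-abelian. Enough injectives in $\C$ then follow from $\domdim \A \geq 1$: every $P \in \C$ embeds in $\A$ into a projective-injective $J \in \C$, which remains injective in $\C$ because monomorphisms in $\C$ are monomorphisms in $\A$.

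Finally, the correspondence is bijective: starting from $\C$ one recovers $\Projectives(\mod \C) \simeq \C$ via Yoneda and the idempotent completeness of $\C$; starting from $\A$, the restricted Yoneda equivalence from the previous paragraph recovers $\mod \Projectives(\A) \simeq \A$. The main technical obstacle I anticipate is producing $0$-cokernels in $\C = \Projectives(\A)$: whereas $0$-kernels emerge almost immediately from $\gldim \A \leq 1$ and enough projectives, the existence of $0$-cokernels requires weaving together covariant finiteness, the restricted Yoneda equivalence, and the coincidence of the global dimensions of $\mod \C$ and $\mod \C^{\op}$ for coherent categories.
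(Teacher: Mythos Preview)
Your proof is correct and follows the same overall architecture as the paper's---both rest on the Auslander correspondence $\C \leftrightarrow \mod \C$ / $\A \leftrightarrow \Projectives(\A)$ and then match up the extra conditions on each side. The difference is one of packaging. The paper invokes the base correspondence as a black box (citing \cite[Proposition 2.1.15]{MR4327095}) and then uses \cite[Lemma 5.2]{MR1856980} to identify ``$\Projectives(\A)$ covariantly finite'' with ``$\Projectives(\A)$ left coherent'', after which Theorem~\ref{theorem.2} does all the remaining work in one stroke. You instead unpack each ingredient by hand: you build the left $\Projectives$-approximation explicitly from the torsion decomposition of Proposition~\ref{proposition.13}(f), you produce $0$-kernels in $\Projectives(\A)$ directly from the image factorisation and $\gldim \A \leqslant 1$, and you obtain weak cokernels from the approximation property before appealing to \cite[Theorem A.5]{2409.10438}. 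Your route is more self-contained and makes the mechanisms visible (in particular, the left approximation via the effaceable/projective splitting is a pleasant alternative to citing Beligiannis), while the paper's route is shorter and highlights that the result is really just a restriction of a known correspondence. One small point: your claim that ``monomorphisms in $\C$ are monomorphisms in $\A$'' deserves a one-line justification---it holds because $\A$ has enough projectives, so $\A(P,\Ker g) = 0$ for all $P \in \C$ forces $\Ker g = 0$.
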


\begin{proof}
It is well known that the assignment of $\C$ to $\mod \C$ gives a bijective correspondence between the equivalence classes of additive, idempotent complete and right coherent categories $\C$ and the equivalence classes of abelian categories with enough projectives, see, for example, \cite[Proposition 2.1.15]{MR4327095}. Moreover, the inverse of this correspondence assigns the equivalence class of an abelian category $\A$ with enough projectives to the equivalence class of $\Projectives (\A)$.

Now, observe that, by \cite[Lemma 5.2]{MR1856980}, if $\A$ is an abelian category with enough projectives, then $\Projectives (\A)$ is covariantly finite in $\A$ if and only if $\Projectives (\A)$ has weak cokernels, which is the case if and only if $\Projectives (\A)$ is left coherent, by \cite[Proposition 3.1]{2409.10438}. In particular, if $\C$ is an additive, idempotent complete  and right coherent category, then $\Projectives (\mod \C) = \proj \C$ is covariantly finite in $\mod \C$ if and only if $\C$ is left coherent, as the Yoneda embedding induces an equivalence $\C \approx \proj \C$. It follows from this observation and Theorem \ref{theorem.2} that, by restricting the above correspondence to the equivalence classes of $0$-abelian categories $\C$ with enough injectives, we obtain the desired correspondence with the equivalence classes of abelian categories $\A$ which are $0$-Auslander and such that $\Projectives (\A)$ is covariantly finite in $\A$.
\end{proof}

The second proposition, which is given below, is obtained by dropping the assumption of Proposition \ref{proposition.34} that the subcategories of projective objects of the $0$-Auslander abelian categories are covariantly finite.

\begin{proposition}
There is a bijective correspondence between the equivalence classes of additive categories that have $0$-kernels and enough injectives, and the equivalence classes of $0$-Auslander abelian categories. The correspondence is given as follows:
\begin{enumerate}
    \item[(a)] If $\B$ is an additive category that has $0$-kernels and enough injectives, then send it to $\mod \B$.
    \item[(b)] If $\A$ is a $0$-Auslander abelian category, then send it to $\Projectives (\A)$, its subcategory of projective objects.
\end{enumerate}
\end{proposition}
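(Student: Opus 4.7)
The plan is to mimic the proof of Proposition \ref{proposition.34}, dropping the $0$-cokernel requirement on the ``$\B$-side'' and, correspondingly, the covariant finiteness hypothesis on the ``$\A$-side''. The correspondence will send (the equivalence class of) $\B$ to $\mod \B$ and (the equivalence class of) $\A$ to $\Projectives(\A)$.

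First, I would verify that if $\B$ is additive and has $0$-kernels and enough injectives, then $\mod \B$ is a $0$-Auslander abelian category. By Proposition \ref{proposition.2}, the existence of $0$-kernels already forces $\B$ to be idempotent complete, so the standing blanket assumption for the earlier results is satisfied. By Proposition \ref{proposition.1}, $\B$ is right coherent with $\gldim(\mod \B) \leqslant 1$, and hence $\mod \B$ is abelian, has enough projectives, and has global dimension at most $1$. It remains to check $\domdim(\mod \B) \geqslant 1$. The key observation is that the proof of Proposition \ref{proposition.3} only uses the existence of $0$-kernels (to obtain a length-one projective resolution of a finitely presented $\B$-module whose map is of the form $\B(-,f)$ for a monomorphism $f$), so $\B$ is right comprehensive. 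Then the argument of Proposition \ref{proposition.4}(a) applies verbatim, yielding $\domdim(\mod \B) \geqslant 1$.

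Second, I would verify that if $\A$ is a $0$-Auslander abelian category, then $\Projectives(\A)$ is additive, has $0$-kernels, and has enough injectives. As recalled in the proof of Proposition \ref{proposition.34}, there is a standard equivalence $\A \approx \mod \Projectives(\A)$ under which $\Projectives(\A)$ corresponds to $\proj \Projectives(\A)$. The hypothesis $\gldim \A \leqslant 1$ then gives $\gldim(\mod \Projectives(\A)) \leqslant 1$, so $\Projectives(\A)$ is right coherent with global dimension at most $1$, hence has $0$-kernels by Proposition \ref{proposition.1} (and is then automatically idempotent complete by Proposition \ref{proposition.2}). Because $\Projectives(\A)$ has $0$-kernels, it is right comprehensive by the observation above, so $\domdim(\mod \Projectives(\A)) = \domdim \A \geqslant 1$ translates via Proposition \ref{proposition.4}(a) to $\Projectives(\A)$ having enough injectives.

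The two assignments are mutually inverse on equivalence classes: for $\B$ as in the first paragraph, the Yoneda embedding gives $\B \approx \proj \B = \Projectives(\mod \B)$; for $\A$ as in the second, the equivalence $\mod \Projectives(\A) \approx \A$ is the standard fact already invoked in Proposition \ref{proposition.34}. The only genuinely new point is thus the claim that the proofs of Propositions \ref{proposition.3} and \ref{proposition.4}(a) go through under the weaker hypothesis of having only $0$-kernels (rather than both $0$-kernels and $0$-cokernels); this is the main obstacle, and once it is confirmed, the rest of the argument follows the template of Proposition \ref{proposition.34}.
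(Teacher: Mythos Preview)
Your proposal is correct and follows essentially the same approach as the paper. The paper's proof likewise reduces to the equivalence ``$\B$ additive with $0$-kernels and enough injectives $\Leftrightarrow$ $\B$ idempotent complete, right coherent, and $\gldim(\mod\B)\leqslant 1\leqslant\domdim(\mod\B)$'', and establishes it exactly as you do: Propositions \ref{proposition.2} and \ref{proposition.1} handle the $0$-kernel part, the proof of Proposition \ref{proposition.3} is observed to yield right comprehensiveness from $0$-kernels alone, and then \cite[Corollary 7.5]{2409.10438} (the content behind Proposition \ref{proposition.4}(a)) converts enough injectives into the dominant dimension bound.
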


\begin{proof}
This is similar to the proof of Proposition \ref{proposition.1}, but follows from the fact that an additive category $\B$ has $0$-kernels and enough injectives if and only if $\B$ is idempotent complete, right coherent and $\gldim (\mod \B) \leqslant 1 \leqslant \domdim (\mod \B)$. To see that this fact holds, note that, by Propositions \ref{proposition.2} and \ref{proposition.1}, an additive category $\B$ has $0$-kernels if and only if $\B$ is idempotent complete, right coherent and $\gldim (\mod \B) \leqslant 1$. Furthermore, it follows from the proof of Proposition \ref{proposition.3} that if $\B$ is additive, idempotent complete, right coherent and $\gldim (\mod \B) \leqslant 1$, then $\B$ is right comprehensive. Therefore, if $\B$ satisfies these previous conditions, then $\B$ has enough injectives if and only if $\domdim (\mod \B) \geqslant 1$, by \cite[Corollary 7.5]{2409.10438}.
\end{proof}

Finally, we remark that, in light of results of Chen from \cite{Chen.2306} and \cite{Chen.2308}, it becomes natural to ask whether there exists a notion of a ``$0$-cluster tilting subcategory'' of an abelian category. One would expect that such subcategories coincide with $0$-abelian categories, in the same way that $n$-cluster tilting subcategories of abelian categories coincide with $n$-abelian categories, when $n \geqslant 1$, as proved by Kvamme in \cite[Corollary 1.3]{MR4301013} and by Ebrahimi and Nasr-Isfahani in \cite[Theorem A]{MR4514466}. The author believes that the search for the notion of a ``$0$-cluster tilting subcategory'' would lead to exciting new research.

\section*{Acknowledgments}

The author would like to thank his Ph.D. advisor Alex Martsinkovsky for being receptive to the ideas in this paper, and for his many comments and suggestions. The author also thanks Hipolito Treffinger for stimulating discussions on torsion pairs, which led to the content of Section \ref{section.3}.

\end{document}